\newtheorem{theorem}{Theorem}[section]
\newtheorem{lemma}[theorem]{Lemma}
\newtheorem{sublemma}{}[theorem]
\newtheorem{corollary}[theorem]{Corollary}
\newtheorem{conjecture}{Conjecture}
\newtheorem{proposition}[theorem]{Proposition}
\newtheorem{question}{Question}
\newcommand{\romannum}[1]{\romannumeral#1\relax}
\newcommand{\cC}{\mathcal{C}}
\newcommand{\cV}{\mathcal{V}}
\newcommand{\dd}{\backslash\backslash}
\newcommand*{\rom}[1]{\expandafter\@slowromancap\romannumeral #1@}
\title{Super-minimally $3$-connected graphs}
\author{Wayne Ge}
\date{\today}
\begin{document}

\maketitle
\begin{abstract}
In this paper, we introduce super-minimally $k$-connected graphs, those $k$-connected graphs in which no proper subgraph is $k$-connected. For $k \geq 3$, this class lies strictly between the classes of minimally $k$-connected graphs and uniformly $k$-connected graphs. In particular, we determine the minimum number of degree-$3$ vertices in a super-minimally $3$-connected graph, thereby extending a result of Halin on minimally $3$-connected graphs. In addition, we determine the maximum number of edges in a super-minimally $3$-connected graph, extending Xu’s result for uniformly $3$-connected graphs, and providing an analogue of Halin's result for minimally $3$-connected graphs.

\end{abstract}

\section{Introduction}

Unless stated otherwise, all graph terminology in this paper follows Bondy and Murty~\cite{Bondy-Murty}. Throughout, all graphs considered are simple and finite unless specified otherwise. For a positive integer~$k$, a graph~$G$ is {\it minimally $k$-connected} if $G$ is $k$-connected, but the deletion $G \setminus e$ is not $k$-connected for every edge $e$ of $G$. Similarly, $G$ is {\it critically $k$-connected} if $G$ is $k$-connected, but $G - v$ is not $k$-connected for every vertex $v$ of $G$. In~\cite{BOP2002}, Beineke, Oellermann, and Pippert introduced the notion of {\it uniform connectivity}. A graph~$G$ is {\it uniformly $k$-connected} if, for every pair of distinct vertices, there are exactly $k$ internally disjoint paths connecting them. In this paper, we say that $G$ is {\it super-minimally $k$-connected} if $G$ is $k$-connected, but no proper subgraph of $G$ is $k$-connected. In Section~\ref{subsec:relations}, we formalize the relationships among the three connectivity classes and establish the following lemma describing their hierarchy.

\begin{lemma}\label{lem:inclusion lemma}
    For all integers $k$ exceeding one and all graphs $G$,
\begin{enumerate}
    \item[(\romannum{1})] if $G$ is uniformly $k$-connected, then $G$ is super-minimally $k$-connected, and
    \item[(\romannum{2})] if $G$ is super-minimally $k$-connected, then $G$ is minimally $k$-connected.
\end{enumerate}
Moreover, for all $k\geq3$, neither of the converses holds.
\end{lemma}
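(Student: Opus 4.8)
The plan is to prove the two implications straight from the definitions and then to supply explicit counterexamples for both converses. \textbf{Implication (ii)} is immediate: if $G$ is super-minimally $k$-connected then $G$ is $k$-connected, and for each edge $e$ the graph $G\setminus e$ is a \emph{proper} subgraph of $G$, hence not $k$-connected by hypothesis; so $G$ is minimally $k$-connected. For \textbf{implication (i)}, assume $G$ is uniformly $k$-connected. Having $k$ internally disjoint paths between every pair forces $|V(G)|\ge k+1$ and $k$-connectivity, so only the ``no proper $k$-connected subgraph'' part needs work. Let $H$ be a proper subgraph and split on $V(H)$. If $V(H)=V(G)$, then $H\subseteq G\setminus e$ for some $e\in E(G)\setminus E(H)$; since $k$-connectivity is preserved by adding edges on a fixed vertex set, it suffices to rule out $G\setminus e$. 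Writing $e=uv$: any family of internally disjoint $u$--$v$ paths in $G\setminus uv$ avoids the edge $uv$, so adjoining that edge (which has no interior) yields a larger such family in $G$; as the maximum in $G$ is exactly $k$, the family in $G\setminus uv$ has size at most $k-1$. Since $u,v$ are non-adjacent there, Menger's theorem gives a $u$--$v$ separator of size $\le k-1$, so $G\setminus e$ is not $k$-connected.

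The remaining case is the substantive one. Suppose $V(H)=W\subsetneq V(G)$; replacing $H$ by $G[W]\supseteq H$, I may assume $H=G[W]$ is $k$-connected, so $|W|\ge k+1$. Choose $z\in V(G)\setminus W$. Because $G$ is $k$-connected and $|W|\ge k$, the Fan Lemma produces $k$ paths from $z$ to $W$ that pairwise meet only in $z$ and meet $W$ only in their distinct endpoints $w_1,\dots,w_k$. Since $k\ge 2$, the $z$--$w_1$ and $z$--$w_2$ fan-paths concatenate into a single $w_1$--$w_2$ path $P$ whose internal vertices all lie outside $W$. As $G[W]$ is $k$-connected, there are $k$ internally disjoint $w_1$--$w_2$ paths inside $W$; these avoid the interior of $P$, so together with $P$ they give $k+1$ internally disjoint $w_1$--$w_2$ paths in $G$, contradicting uniform $k$-connectivity. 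Hence no proper subgraph of $G$ is $k$-connected.

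For the \textbf{failure of the converses} with $k\ge 3$, first consider ``minimal $\not\Rightarrow$ super-minimal''. Take $G=K_{k,k+1}$. Every edge meets the larger side, whose vertices have degree $k$, so deleting any edge creates a vertex of degree $k-1$; thus $G$ is minimally $k$-connected. Yet deleting a single vertex of the larger side leaves $K_{k,k}$, which is $k$-connected, so $G$ has a $k$-connected proper subgraph and is not super-minimal. For ``super-minimal $\not\Rightarrow$ uniform'' with $k\ge 4$, I would take a cycle $C_n$ with $n>k$ together with $k-2$ pairwise non-adjacent hubs, each joined to every cycle vertex. Then every cycle vertex has degree $k$, one checks $\kappa(G)=k$ and minimality (every edge meets a degree-$k$ cycle vertex), while any two hubs are joined by $n>k$ internally disjoint paths, so $G$ is not uniform. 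Super-minimality follows from a cascade: a cycle vertex of $G[W]$ has degree $k$ in $G$, so all its neighbours (both cycle-neighbours and all hubs) must lie in $W$, and this propagates around the cycle to force $W=V(G)$.

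The main obstacle is the case $k=3$, where the construction above degenerates to a single hub, i.e.\ to the wheel $W_n$, which is uniformly $3$-connected and hence \emph{not} a counterexample. I would instead attach two hubs to \emph{alternate} vertices of an even cycle: let $C_{2n}=a_1b_1a_2b_2\cdots a_nb_n$ with $x$ joined to all $a_i$ and $y$ joined to all $b_i$, taking $n\ge 4$. Each $a_i,b_i$ then has degree $3$ (giving minimality), the paths $x\,a_i\,b_i\,y$ show $\kappa(x,y)=n>3$ (non-uniformity), and the same degree-cascade forces $W=V(G)$ (super-minimality). The parts demanding the most care are verifying $3$-connectivity of this graph through a short cut analysis and confirming the cascade, since here the irregularity must be placed just delicately enough that two high-degree vertices coexist without creating any smaller $k$-connected induced subgraph.
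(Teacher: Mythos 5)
Your proposal is correct, and for the two implications it follows essentially the paper's own argument: implication (ii) is the same one-line observation, and for (i) both proofs split on whether the proper $k$-connected subgraph is spanning, killing the spanning case with the missing edge plus the $k$ paths inside the subgraph, and the non-spanning case by routing a fan from an outside vertex into two distinct vertices of the subgraph to manufacture a $(k+1)$-st internally disjoint path (the paper uses a $2$-fan where you invoke a $k$-fan and discard all but two branches; your spanning case detours through Menger where the paper exhibits the $k+1$ paths directly, but these are cosmetic differences). Your counterexamples also largely coincide: $K_{k,k+1}$ is the paper's $K_{k,k+i}$ with $i=1$, and your $k=3$ construction is precisely the paper's alternating double wheel $A_n$. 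The one genuine divergence is the family for $k\geq 4$: the paper uses $Q_{k+1}$, obtained from $K_{k+1}-C_{k+1}$ by adding two nonadjacent universal vertices, whereas you take a cycle $C_n$ with $n>k$ together with $k-2$ pairwise nonadjacent universal hubs. Both work; yours has the advantages that the super-minimality proof is the same degree cascade you use for $A_n$ (uniform in $k$) and that it yields infinitely many examples for each $k$, while the paper's $Q_{k+1}$ is a single small graph on $k+3$ vertices but needs a separate inductive connectivity lemma for $K_{k+1}-C_{k+1}$. One small step to make explicit in your cascade: a candidate vertex set $W$ of a $k$-connected induced subgraph must contain at least one cycle vertex, since the hubs (respectively $\{x,y\}$ in $A_n$) induce an edgeless graph; once that is said, the propagation around the cycle does force $W=V(G)$, and spanning subgraphs are excluded by minimality exactly as you argue.
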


\subsection{Minimum number of degree-$k$ vertices}
Minimally $k$-connected graphs have received attention since the 1960s. A natural problem is to determine the minimum number of vertices of degree~$k$ in a $n$-vertex minimally $k$-connected graph. For the cases $k=2$ and $k=3$, these minima were determined, respectively, by Dirac~\cite[(6), (5)]{Dirac} and Halin~\cite[Satz~6]{Halin-German} in the following results.

\begin{theorem}\label{thm:Dirac_main}
    A minimally $2$-connected graph $G$ has at least $\frac{|V(G)| + 4}{3}$ vertices of degree two.
\end{theorem}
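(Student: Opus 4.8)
The plan is to translate the desired inequality into a statement about the vertices of degree at least three and then to prove it by a global edge count, relying on the structural characterization of minimally $2$-connected graphs: a $2$-connected graph on at least four vertices is minimally $2$-connected if and only if it has no cycle with a chord (Dirac, Plummer). Write $V_2$ for the set of degree-$2$ vertices, $D = V(G)\setminus V_2$ for the vertices of degree at least three, and set $t = |V_2|$, $m = |D|$, so $|V(G)| = t + m$. A short calculation shows that the bound $t \ge (|V(G)|+4)/3$ is equivalent to $2t \ge m + 4$, so it suffices to prove the latter. The degenerate case $m = 0$ is immediate, since then $G$ is $2$-regular and connected, hence a cycle $C_n$ with $t = n \ge (n+4)/3$; and $m = 1$ cannot occur, as a single vertex of degree at least three together with degree-$2$ paths returning to it would be a cut vertex. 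Thus I may assume $m \ge 2$.

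For the count, first suppress the degree-$2$ vertices: the subgraph induced by $V_2$ is a disjoint union of paths (a degree-$2$ cycle-component is impossible in a $2$-connected graph with $m\ge 1$), and each such maximal path has both endpoints adjacent to vertices of $D$, so it realizes a path in $G$ between two vertices of $D$ whose interior consists of degree-$2$ vertices. Let $p$ be the number of these maximal degree-$2$ paths. Counting the degrees of the vertices in $D$ two ways gives
\[
3m \le \sum_{v \in D}\deg_G(v) = 2\,\bigl|E(G[D])\bigr| + 2p,
\]
since each edge inside $D$ contributes $2$ and each degree-$2$ path contributes exactly one endpoint to each of two vertices of $D$. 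Every maximal degree-$2$ path carries at least one (distinct) degree-$2$ vertex, so $t \ge p$. Consequently, if one can show the key structural bound $|E(G[D])| \le m - 2$, then $2p \ge 3m - 2(m-2) = m+4$, whence $2t \ge 2p \ge m+4$, as required.

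Everything therefore reduces to the following structural lemma, which I expect to be the main obstacle: \emph{the subgraph $G[D]$ induced by the vertices of degree at least three is a forest with at least two components, so that $|E(G[D])| \le m-2$.} The forest part is proved by contradiction: given a cycle inside $G[D]$, one produces a chorded cycle in $G$. Here the subtlety is that the cycle in $G[D]$ need not itself be chorded (minimal $2$-connectivity permits chordless cycles among high-degree vertices); instead I would fix an edge $xy$ of such a cycle, take the long way around the cycle as one $x$--$y$ path, and use the extra edges guaranteed by the degree condition together with Menger's theorem to route a second, internally disjoint $x$--$y$ path through the rest of $G$ avoiding that interior, so that the union is a cycle in which $xy$ is a chord. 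To see that $G[D]$ has at least two components, I would show that any edge $xy$ with both ends in $D$ forces a cut vertex: since $xy$ is a chord of no cycle, $G\setminus xy$ is not $2$-connected, so it has a cut vertex $s$ separating $x$ from $y$, and $G$ is then the union of two subgraphs sharing only $s$ and joined solely by the edge $xy$. Iterating this bottleneck structure across the edges of $G[D]$ shows that these edges cannot link all of $D$ into a single connected piece, giving at least two components.

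The delicate part of the whole argument is the structural lemma, and within it the two Menger/cut-vertex routing steps; by contrast the counting in the second paragraph is routine once the lemma is in hand. I would treat the low-order cases ($m=2$, and the possibility of a single nontrivial tree component) explicitly, since these are exactly where equality can be approached, with $K_{2,3}$ realizing $2t = m+4$. An alternative route, which avoids isolating the ``two components'' statement, is an induction on $|V(G)|$ that deletes a degree-$2$ vertex and suppresses it, but this requires checking that minimal $2$-connectivity is preserved under suppression (handling the creation of parallel classes and potential new chords), which I expect to be comparably delicate.
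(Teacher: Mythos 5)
The paper itself contains no proof of this statement; it is quoted from Dirac, so your proposal has to be judged on its own merits. Your counting framework is correct: with $t=|V_2|$, $m=|D|$, the bound is equivalent to $2t\ge m+4$; the degenerate cases $m=0,1$ are handled correctly; the identity $\sum_{v\in D}\deg_G(v)=2|E(G[D])|+2p$ and the bounds $3m\le 2|E(G[D])|+2p$, $t\ge p$ are all right; and you have correctly isolated the crux, namely that $|E(G[D])|\le m-2$, i.e.\ $G[D]$ is a forest with at least two components. That lemma is in fact true (the forest half is exactly the $k=2$ case of Mader's result, cited in this paper as Lemma~\ref{lem:Mader_induce_forest}), and with it your argument closes. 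The gap is that neither half of the lemma is actually proved by what you describe.

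Concretely, your plan for the forest half provably cannot work. Suppose $C$ is a cycle in $G[D]$, $xy\in E(C)$, and $P=C\setminus xy$ is ``the long way around.'' Since $G$ is minimally $2$-connected, $G\setminus xy$ is not $2$-connected, and every cut vertex of $G\setminus xy$ must separate $x$ from $y$ (otherwise it would already be a cut vertex of $G$). Such a cut vertex lies on every $x$--$y$ path of $G\setminus xy$, in particular in the interior of $P$; consequently \emph{every} $x$--$y$ path of $G\setminus xy$ meets the interior of $P$. So the ``second, internally disjoint $x$--$y$ path'' that you want Menger's theorem to supply does not exist, and no chorded cycle can be produced this way; the contradiction in Dirac's and Mader's proofs is obtained by a genuinely more global argument, which is why this lemma is nontrivial. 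The two-components half has the same problem in a softer form: the ``bottleneck'' structure you invoke (a cut vertex $s$ of $G\setminus xy$ with the two sides joined only through $s$ and the edge $xy$) exists for \emph{every} edge of \emph{every} minimally $2$-connected graph, including edges at degree-$2$ vertices, so ``iterating'' it cannot by itself prevent $G[D]$ from being connected; one must play the cut structure against the degree-$\ge 3$ condition (already for $G[D]$ a path $x_1x_2x_3x_4$ the contradiction comes from showing the degree conditions force degree-$2$ paths joining $x_1,x_3$ and $x_2,x_4$, which then make $x_2x_3$ a chord). Moreover this half is not covered by Mader's lemma, so even outsourcing the forest statement to the literature leaves the decisive ``at least two components'' claim unproven. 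As it stands, the proposal is a correct reduction plus an unproved (though true) structural lemma, with the sketched proofs of that lemma not repairable in the form given.
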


\begin{theorem}\label{thm:Halin_main}
    A minimally $3$-connected graph $G$ has at least $\frac{2|V(G)| + 6}{5}$ vertices of degree three.
\end{theorem}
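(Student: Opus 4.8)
The plan is to combine a classical structural lemma with a single global edge count, reducing the whole theorem to one sharp local inequality. The structural input I would use is the theorem of Halin and Mader that \emph{every cycle of a minimally $3$-connected graph contains a vertex of degree $3$}; equivalently, if $W$ denotes the set of vertices of degree at least $4$, then $F := G[W]$ is a forest. Write $n = |V(G)|$, let $V_3 = V(G)\setminus W$ be the set of degree-$3$ vertices with $a = |V_3|$ and $w = |W|$, let $c$ be the number of components of $F$, let $e_3$ be the number of edges with both ends in $V_3$, and let $\Delta = \sum_{v \in W}(\deg_G(v) - 4) \ge 0$ be the total excess degree. Since $n = a + w$, the target inequality $5a \ge 2n + 6$ rewrites as $3a - 2w \ge 6$, so the first step is purely arithmetic.

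The second step counts the edges leaving $V_3$ in two ways. Because $F$ is a forest, $|E(F)| = w - c$, so the number $e_{3W}$ of edges between $V_3$ and $W$ is
\[
e_{3W} = \sum_{v\in W}\bigl(\deg_G(v) - \deg_F(v)\bigr) = (4w + \Delta) - 2(w - c) = 2w + 2c + \Delta .
\]
On the other hand $3a = \sum_{v\in V_3}\deg_G(v) = 2e_3 + e_{3W}$. Eliminating $e_{3W}$ yields the identity $3a - 2w = 2e_3 + 2c + \Delta$, so (after disposing of the case $W = \emptyset$, where $a = n \ge 4$ gives the bound at once) the entire theorem becomes equivalent to the single inequality
\[
2e_3 + 2c + \Delta \ge 6 .
\]
This reduction is the clean part; the content is now concentrated in a statement about how ``spread out'' the high-degree part of a minimally $3$-connected graph must be.

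The inequality is trivial whenever $c \ge 3$, or $\Delta \ge 6$, or $e_3 \ge 3$, and is easy when $c = 2$, where only $2e_3 + \Delta \ge 2$ remains. The real work is the case $c = 1$, in which $W$ induces a single tree $T$ and I must prove $2e_3 + \Delta \ge 4$. Here I would argue by contradiction against the \emph{tight configuration}, where $T$ has all degrees equal to $4$ (so $\Delta = 0$) and no edge lies inside $V_3$ (so $e_3 = 0$). Small cases are eliminated first: if $|W| = 1$ then a degree-$3$ vertex would need all three neighbours in the single-vertex set $W$, impossible in a simple graph, so $e_3 > 0$ already. For $|W| \ge 2$ the tree $T$ has a leaf $\ell$ with tree-neighbour $p$ and three further neighbours $x_1, x_2, x_3 \in V_3$; by minimality, deleting $\ell p$ must create a $2$-cut $\{s,t\}$ separating $\ell$ from $p$ in $G \setminus \ell p$, and the three independent $V_3$-attachments $x_1, x_2, x_3$ at the leaf should be incompatible with so small a cut.

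The hard part will be making this fragment analysis airtight, so that it uniformly excludes every tight configuration and converts the ``spread'' heuristic into a proof; this step mirrors the genuinely nontrivial core of Mader's argument for the general bound $\frac{(k-1)n + 2k}{2k-1}$, of which this theorem is the case $k=3$. As a fallback I would keep an induction on $n$, via a reduction that deletes a carefully chosen degree-$3$ vertex and restores minimal $3$-connectivity with the constant $6$ emerging from the base graph $K_4$; there the obstacle is guaranteeing that such a reduction always exists while controlling how many degree-$3$ vertices it destroys.
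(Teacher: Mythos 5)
First, a framing remark: the paper does not prove this statement at all — it is quoted as Halin's theorem (Satz 6 of the cited paper) and used as background — so there is no in-paper argument to compare yours against; your proposal has to stand on its own. Its first half does: with $W$ the set of vertices of degree at least four, Mader's lemma (Lemma 2.9 of the paper) gives that $G[W]$ is a forest, and your double count is correct. Writing $n=a+w$, the bound $5a\ge 2n+6$ is indeed equivalent to $3a-2w\ge 6$, your identity $3a-2w=2e_3+2c+\Delta$ is valid, and the case $W=\emptyset$ is immediate. So the theorem is equivalent, when $W\neq\emptyset$, to the inequality $2e_3+2c+\Delta\ge 6$.

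The gap is that this inequality is not a small residual fact to be checked against one tight configuration — it is exactly equivalent to Halin's theorem, so the entire difficulty has been relocated into it, and your treatment of the hard cases is incomplete in two concrete ways. First, the case analysis is wider than you acknowledge: for $c=1$ you must exclude every pair $(e_3,\Delta)$ with $2e_3+\Delta\le 3$, i.e.\ the six configurations $(0,0),(0,1),(0,2),(0,3),(1,0),(1,1)$ — in which some vertices of $W$ may have degree up to $7$ and one edge may lie inside $V_3$ — whereas your leaf argument is set up only for the single configuration $e_3=\Delta=0$; for $c=2$ you must exclude $(0,0)$ and $(0,1)$, and you offer no argument there beyond calling it ``easy.'' Second, even in the fully tight case the sketch does not close: deleting the tree edge $\ell p$ produces \emph{some} $2$-cut $\{s,t\}$ of $G\setminus \ell p$ separating $\ell$ from $p$, but nothing forces this cut to sit near $\ell$ or to interact in a controllable way with $x_1,x_2,x_3$, whose remaining neighbours lie elsewhere in the tree; the cut can be far from the leaf, and one is led to an inductive or global argument over the whole tree — which is precisely the nontrivial core of Halin's and Mader's proofs. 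You say as much yourself (``the hard part will be making this fragment analysis airtight''), and the fallback induction is likewise only a hope. As it stands, the proposal is a correct and genuinely nice reformulation plus a plan, not a proof.
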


It was conjectured by Halin~\cite{Halin} that, for all $k\geq 2$, there is a constant $c_k$ such that every minimally $k$-connected graph $G$ has at least $c_k|V(G)|$ vertices of degree $k$. Mader~\cite[Theorem 17]{Mader_eng} proved this conjecture by establishing the following generalization of Theorems~\ref{thm:Dirac_main} and~\ref{thm:Halin_main}.

\begin{theorem}\label{thm:Mader_main}
    For all $k \geq 2$, a minimally $k$-connected graph $G$ has at least $\frac{(k-1)|V(G)|+2k}{2k - 1}$ vertices of degree $k$.
\end{theorem}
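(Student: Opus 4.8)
The plan is to bound the number $t$ of degree-$k$ vertices from below by combining a degree count with two structural facts about minimally $k$-connected graphs. Write $n=|V(G)|$, let $T$ be the set of degree-$k$ vertices (so $|T|=t$), and let $S=V(G)\setminus T$ be the set of vertices of degree exceeding $k$, with $s=|S|=n-t$; denote by $e(T)$ the number of edges inside $T$ and by $e(S,T)$ the number of edges between $S$ and $T$. If $S=\emptyset$ then $t=n$ and the bound reduces to $n\ge 2$, which holds trivially, so I may assume $s\ge 1$. First I would establish that the induced subgraph $G[S]$ is a forest. This is immediate from the known fact (Halin for $k=3$, Mader in general, resting on the criticality of edges) that every cycle in a minimally $k$-connected graph contains a vertex of degree $k$: a cycle inside $S$ would consist entirely of vertices of degree exceeding $k$, a contradiction. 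Writing $c$ for the number of components of $G[S]$, I then have $e(G[S])=s-c$, and since each vertex of $S$ has degree at least $k+1$, summing degrees over $S$ gives $(k+1)s\le 2(s-c)+e(S,T)$, that is,
\[
e(S,T)\ge (k-1)s+2c.
\]
Counting degrees over $T$ yields the exact identity $kt=2e(T)+e(S,T)$, so
\[
kt-(k-1)s\ \ge\ 2\bigl(e(T)+c\bigr).
\]
Because $n=s+t$, the desired conclusion $(2k-1)t\ge (k-1)n+2k$ is equivalent to $kt-(k-1)s\ge 2k$; hence the whole theorem reduces to the single structural inequality $e(T)+c\ge k$.

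The heart of the matter, and the step I expect to be the main obstacle, is proving $e(T)+c\ge k$. This is precisely where minimality is indispensable: for the merely $(k+1)$-connected graph $K_{k+2}$ one has $e(T)+c=1<k$, so the forest structure and degree counting alone cannot suffice. I would attack the inequality through the criticality of edges. For every edge $e$, minimality supplies a set of $k-1$ vertices whose deletion disconnects $G\setminus e$, hence a family of minimum separators of $G$ tightly associated with $e$; the plan is to show that if $e(T)+c$ were at most $k-1$, then the components $B_1,\dots,B_c$ of $G[S]$ together with the few edges inside $T$ could be split off by fewer than $k$ vertices, contradicting $k$-connectivity. A natural way to organize this is to contract each $B_i$ to a single vertex and analyze the resulting graph on $T\cup\{b_1,\dots,b_c\}$, using that $N(B_i)\subseteq T$ has size at least $k$ whenever $B_i$ does not exhaust $G$. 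The delicate point is that the neighborhoods $N(B_i)$ may overlap heavily and that the edges counted by $e(T)$ can themselves supply the missing connectivity — the wheel, where $c=1$ but $e(T)$ is large, shows that neither term alone controls the bound. I therefore expect the rigorous argument to proceed by induction on $|V(G)|$, excising a minimal fragment (an ``end'') consisting of degree-$k$ vertices to produce a smaller minimally $k$-connected graph, following Mader's fragment technique.

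Granting $e(T)+c\ge k$, the chain above gives $kt-(k-1)s\ge 2k$, and substituting $s=n-t$ rearranges to $(2k-1)t\ge (k-1)n+2k$, as claimed. The inequalities used pinpoint the extremal structure: equality forces every vertex of $S$ to have degree exactly $k+1$, $e(T)=0$, and $c=k$, a configuration realized by $K_{k,k+1}$, which thus shows the bound is best possible.
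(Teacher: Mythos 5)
Your counting scaffolding is correct: with $T$ the degree-$k$ vertices, $S=V(G)\setminus T$, and $c$ the number of components of the forest $G[S]$ (the forest property is the paper's Lemma~\ref{lem:Mader_induce_forest}), the identity $kt=2e(T)+e(S,T)$ together with the per-component count $e(S,T)\ge (k-1)s+2c$ does reduce the theorem to $kt-(k-1)s\ge 2k$, which would follow from $e(T)+c\ge k$; the degenerate case $S=\emptyset$ is also handled correctly. The genuine gap is that you never prove $e(T)+c\ge k$. That inequality is the entire content of Mader's theorem --- everything before it is a two-line degree count --- and your treatment of it is a plan rather than a proof: ``I would attack\ldots'', ``the plan is to show\ldots'', ``I therefore expect the rigorous argument to proceed by induction\ldots following Mader's fragment technique''. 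You even list the obstacles (overlapping neighborhoods $N(B_i)$; the wheel showing that neither $e(T)$ nor $c$ alone suffices) without resolving them. Worse, the statement you defer is \emph{formally stronger} than the theorem itself: the theorem asserts $kt-(k-1)s\ge 2k$, and this yields $e(T)+c\ge k$ only when every vertex of $S$ has degree exactly $k+1$, so your missing lemma cannot be waved through as a known reformulation of the result. Note also that the paper does not prove this theorem (it cites Mader), so there is no argument in the paper that your sketch can be measured against or patched from; as written, you have proved Mader's bound modulo an unproved key lemma.

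For what it is worth, the missing step can be completed for $k=3$ from a result the paper does record. Lemma~\ref{lem:Halin_two_deg3_vtx} (every cycle of a minimally $3$-connected graph meets at least two vertices of degree three) implies that a degree-$3$ vertex has at most one neighbor in any single tree component of $G[S]$: two such neighbors, the path joining them inside the tree, and the vertex itself would form a cycle with only one degree-$3$ vertex. Hence if $c\le 2$, every vertex of $T$ has at least $3-c\ge 1$ neighbors inside $T$; since $G$ contains a cycle, the same lemma gives $t\ge 2$, so $e(T)\ge \tfrac{t(3-c)}{2}\ge 3-c$ and therefore $e(T)+c\ge 3$. For general $k$, the analogous input --- either ``every cycle contains at least two vertices of degree $k$'' or Mader's fragment analysis --- is exactly what must be established; it does not follow from the one-vertex-per-cycle fact or the forest lemma you invoke, and your proposal neither proves nor cites it. Until that is supplied, the proof is incomplete at its decisive step.
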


When $k\geq 2$, since every super-minimally $k$-connected graph is also minimally $k$-connected, it is natural to ask the following.

\begin{question}\label{qs:main}
    Can Mader's lower bound be improved for super-minimally $k$-connect\-ed graphs?
\end{question}

For $k = 2$, we make the following elementary observation.

\begin{proposition}\label{prop:sm2c}
   Every super-minimally $2$-connected graph $G$ is a cycle and so has exactly $|V(G)|$ edges and  $|V(G)|$ vertices of degree two. 
\end{proposition}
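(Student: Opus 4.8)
The plan is to show that a super-minimally $2$-connected graph must be a cycle by characterizing $2$-connected graphs that have no proper $2$-connected subgraph. I would begin by recalling that a cycle is $2$-connected, and that deleting any single edge from a cycle yields a path, which is not $2$-connected. Similarly, deleting any vertex yields a path. So a cycle has no proper $2$-connected subgraph, confirming that every cycle is super-minimally $2$-connected; the substance is the reverse direction.

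For the reverse direction, suppose $G$ is super-minimally $2$-connected. By Lemma~\ref{lem:inclusion lemma}(\romannum{2}), $G$ is minimally $2$-connected. The standard structural fact I would invoke is that a minimally $2$-connected graph on at least three vertices other than a cycle must contain a vertex of degree exceeding two; more usefully, every $2$-connected graph contains a cycle, and I want to argue that if $G$ were not itself a cycle, it would properly contain a $2$-connected subgraph. The cleanest route is via the ear decomposition of $2$-connected graphs: every $2$-connected graph $G$ can be built from a cycle $C$ by successively adding ears (paths whose endpoints lie on the current graph but whose internal vertices are new). If $G$ is not a cycle, then its ear decomposition uses at least one ear beyond the initial cycle, so $C$ is a proper subgraph of $G$ that is itself $2$-connected. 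This contradicts super-minimality, forcing $G = C$ to be a cycle.

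First I would state and use the ear-decomposition theorem for $2$-connected graphs (Whitney's theorem, available in Bondy--Murty), then observe that the base cycle $C$ of such a decomposition is a $2$-connected proper subgraph whenever at least one ear is added. Since $G$ is $2$-connected and hence has such a decomposition, the super-minimality hypothesis rules out any added ear, so $G$ equals its base cycle. Once $G$ is known to be a cycle $C_n$ on $n = |V(G)|$ vertices, the edge count is immediate: a cycle has exactly $|V(G)|$ edges, and every vertex of a cycle has degree exactly two, so all $|V(G)|$ vertices have degree two. This completes the proof.

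I expect the main obstacle to be nothing computational but rather selecting the right structural tool and stating it precisely: the argument hinges entirely on the ear decomposition, and care is needed to confirm that the initial cycle in any ear decomposition is a \emph{proper} subgraph exactly when $G$ is not itself a single cycle. An alternative without ear decompositions is to argue directly that if $G$ has a vertex of degree at least three, or has more than $|V(G)|$ edges, then deleting a carefully chosen edge keeps it $2$-connected; but the ear-decomposition approach is the most transparent, so I would favor it.
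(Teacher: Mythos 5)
Your proof is correct. Note first that the paper gives no proof of this proposition at all; it is stated as an elementary observation, so there is no argument of the paper's to compare against. Your ear-decomposition route is sound: if $G$ is $2$-connected but not a cycle, then any ear decomposition of $G$ uses at least one ear, so its base cycle is a proper $2$-connected subgraph, contradicting super-minimality; and the converse (cycles have no proper $2$-connected subgraph) is checked exactly as you say. That said, Whitney's theorem is heavier machinery than you need. Since a $2$-connected graph has minimum degree at least two, it contains a cycle $C$; a cycle on at least three vertices is $2$-connected; and if $G$ is not itself a cycle, then $C$ is a proper subgraph of $G$, which already contradicts super-minimality. This shortcut avoids any appeal to ear decompositions and also sidesteps the one point your argument must be careful about, namely verifying that the base cycle is proper precisely when at least one ear is added. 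Either way, once $G$ is known to be a cycle, the counts of edges and degree-two vertices are immediate, as you observe.
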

    
For $k=3$, an analogous question for uniformly $3$-connected graphs has been answered by ~G\"{o}ring,~Hofmann, and~Streicher~\cite[Theorem 14]{GHS2022}.

\begin{theorem}\label{thm:uniformly 3-con}
     A uniformly $3$-connected graph $G$ has at least $\frac{2|V(G)| + 2}{3}$ vertices of degree three.
\end{theorem}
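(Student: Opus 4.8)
The plan is to recast the bound as an upper bound on the number of high-degree vertices. Write $n=|V(G)|$, let $V_3$ be the set of vertices of degree exactly $3$ and $H$ the set of vertices of degree at least $4$, and put $n_3=|V_3|$ and $h=|H|$. Since $n=n_3+h$, the desired inequality $n_3\ge\frac{2n+2}{3}$ is equivalent to
\[
 n_3\ \ge\ 2h+2, \qquad\text{i.e.}\qquad h\ \le\ \frac{n-2}{3}.
\]
Thus it suffices to show that a uniformly $3$-connected graph has comparatively few vertices of degree exceeding three, the degree-$3$ vertices outnumbering twice the high-degree vertices by at least two. The two extremal shapes to keep in mind are the wheel $W_5$ (one hub, four rim vertices) and, more generally, configurations in which the high-degree vertices induce a tree, each has degree exactly $4$, and each degree-$3$ vertex is attached to exactly one high-degree vertex.

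The first step uses the hierarchy already established: by Lemma~\ref{lem:inclusion lemma}(\romannum{2}), $G$ is minimally $3$-connected, so a structural theorem of Mader applies and the vertices of degree greater than three induce a \emph{forest}. Consequently the number of edges inside $H$ is at most $h-1$, and since every vertex of $H$ has degree at least $4$,
\[
 e(H,V_3)\ =\ \sum_{v\in H}\deg(v)-2\,e(G[H])\ \ge\ 4h-2(h-1)\ =\ 2h+2,
\]
where $e(H,V_3)$ is the number of edges joining $H$ to $V_3$. (The case $h=0$, where $G$ is cubic, is immediate since then $n_3=n\ge\frac{2n+2}{3}$.) The crude bound $e(H,V_3)\le 3n_3$ then only yields $n_3\ge\frac{2h+2}{3}$, which loses a factor of three; indeed this forest estimate alone is essentially the content available for minimally $3$-connected graphs and cannot give the stronger uniform bound.

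The heart of the argument is therefore to upgrade this to $n_3\ge e(H,V_3)$, equivalently to show that \emph{on average each degree-$3$ vertex has at most one neighbour in $H$}. This is where uniform $3$-connectivity, rather than mere minimality, must be used. The basic mechanism is the common-neighbour constraint: if $u\ne v$ have $t$ common neighbours then there are $t$ internally disjoint $u$--$v$ paths through them (one more if $uv\in E(G)$), so $\kappa(u,v)=3$ forces at most three common neighbours, and at most two when $u$ and $v$ are adjacent. Iterating this around a high-degree vertex $w$ shows that the minimal $3$-cuts through $w$ organise its neighbourhood into a subdivided-wheel-like structure whose ``rim'' consists of degree-$3$ vertices; a degree-$3$ vertex adjacent to two members of $H$ tends to create a vertex-pair with four internally disjoint paths, contradicting uniformity. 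Concretely, I would attach to each high-degree vertex a private supply of at least two degree-$3$ vertices (its rim), argue via the $3$-cut structure that these supplies are essentially disjoint across $H$, and track the two-vertex surplus coming from the global (outermost) structure, with $W_5$ and the tree-of-degree-$4$ configurations as the tight cases.

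The step I expect to be the main obstacle is precisely this refinement: proving that the attachments of degree-$3$ vertices to $H$ are sparse enough (the inequality $n_3\ge e(H,V_3)$), because a degree-$3$ vertex may a priori meet several high-degree vertices and the forest bound does not detect this. Controlling it requires a genuine analysis of how the minimal $3$-cuts of $G$ interact---equivalently, a decomposition of $G$ into wheel-like bricks along its $3$-separations---together with careful bookkeeping to avoid double-counting shared degree-$3$ vertices while preserving the additive constant $2$. An alternative route that sidesteps the global bookkeeping is induction: peel a leaf $w$ of the forest $G[H]$ (which then has at least three degree-$3$ neighbours), apply a degree-reducing or vertex-suppressing operation that restores a smaller uniformly $3$-connected graph, and invoke the inductive hypothesis; here the obstacle migrates to proving that such a reduction preserves uniform $3$-connectivity, which is the technical crux in either formulation.
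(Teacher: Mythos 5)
Your reduction of the statement to $n_3\ge 2h+2$ (where $h=|H|$ is the number of vertices of degree at least four) and your use of Mader's theorem (Lemma~\ref{lem:Mader_induce_forest}) to get $e(H,V_3)\ge 4h-2(h-1)=2h+2$ are both correct. But the step you flag as the crux --- the inequality $n_3\ge e(H,V_3)$, i.e.\ that degree-$3$ vertices have on average at most one neighbour in $H$ --- is not merely the hard step: it is \emph{false}, so the proposal cannot be completed as outlined. Counterexample: take a path $v_1v_2v_3v_4v_5v_6$ and two further vertices $x,y$ with $x$ adjacent to $v_1,v_2,v_3,v_6$ and $y$ adjacent to $v_1,v_4,v_5,v_6$ (and $x\not\sim y$). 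A short case check shows this $8$-vertex graph is $3$-connected (every $v_i$ is adjacent to $x$ or to $y$, and $v_1,v_6$ are common neighbours of $x$ and $y$). It is uniformly $3$-connected: any pair containing a degree-$3$ vertex has exactly three internally disjoint paths, since the number of such paths is at most the minimum degree of the pair and at least $3$ by Menger; and the only pair of degree-$4$ vertices, $\{x,y\}$, is separated by the $3$-cut $\{v_1,v_3,v_6\}$, so it also has exactly three. Yet here $H=\{x,y\}$, $n_3=6$, and $e(H,V_3)=8>n_3$: the vertices $v_1$ and $v_6$ each have \emph{two} neighbours of degree four. The same example refutes your heuristic that such a vertex ``creates a vertex-pair with four internally disjoint paths.'' Note that the theorem holds with equality for this graph ($n_3=6=\tfrac{2\cdot 8+2}{3}$), so it is a tight example that any correct proof must accommodate; no pointwise or averaged bound on $|N(v)\cap H|$ for $v\in V_3$ can be the mechanism.

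The structural reason your estimate cannot work is that the forest bound uses only the number of \emph{edges} of the forest $F$ induced by $H$, whereas the quantity that controls the count is the number $c$ of \emph{components} of $F$. Writing $t_i$ for the number of degree-$3$ vertices with exactly $i$ neighbours in $H$, the degree count (inequality~(3) in the paper, with $v_+(G')=c$) gives $2h+2c\le t_1+2t_2+3t_3$, and since $n_3\ge t_1+t_2+t_3$, what would suffice is an upper bound of the shape $t_2+2t_3\le 2c-2$ --- a statement about how many degree-$3$ vertices can link \emph{distinct} components of $F$ (in my example $t_2=2$, $t_3=0$, $c=2$, with equality; for wheels $t_2=t_3=0$, $c=1$). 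Proving such a bound is exactly where uniform $3$-connectivity must enter, and nothing in your sketch (common-neighbour bounds, ``private rims,'' or the unspecified reduction in your fallback induction, whose preservation of uniform $3$-connectivity is left open) supplies it. Be aware also that the paper itself does not prove this theorem --- it quotes it from G\"oring, Hofmann, and Streicher~\cite{GHS2022} --- and the paper's own machinery for the super-minimal case only bounds $t_3$, yielding the weaker constant $\tfrac12$, so there is no argument in the paper you could adapt directly to close this gap.
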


One of the main results of this paper is the following theorem, which answers the question in the affirmative for super-minimally $3$-connected graphs.

\begin{theorem}\label{thm:main}
    A super-minimally $3$-connected graph $G$ has at least $\frac{|V(G)| + 3}{2}$ vertices of degree three.
\end{theorem}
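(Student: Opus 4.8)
The plan is to first recast Theorem~\ref{thm:main} into a form better suited to counting. Writing $a = |V_3|$ for the number of degree-$3$ vertices and $b$ for the number of vertices of degree at least~$4$, we have $|V(G)| = a + b$, so the bound $a \ge \tfrac{|V(G)|+3}{2}$ is exactly equivalent to the clean statement
\[
a \;\ge\; b + 3,
\]
that is, $G$ has at least three more degree-$3$ vertices than vertices of higher degree. All of the work goes into proving this inequality.

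My first step would be to exploit the two consequences of super-minimality separately. By Lemma~\ref{lem:inclusion lemma}(\romannum{2}), $G$ is minimally $3$-connected, so the structural theory of Mader underlying Theorem~\ref{thm:Mader_main} applies; in particular the subgraph induced by the vertices of degree at least~$4$ contains no cycle, hence is a forest $F$. Counting degree-ends at the large vertices, if $F$ has $p$ components then the number of edges joining a large vertex to a degree-$3$ vertex is at least $4b - 2(b-p) = 2b + 2p$, and since each degree-$3$ vertex absorbs at most three such edges this already forces $a \ge \tfrac{2b+2p}{3}$, recovering a weak form of Mader's bound. The extra gain must come from the second consequence of super-minimality: no proper subgraph is $3$-connected, so in particular $G - v$ fails to be $3$-connected for every vertex $v$, meaning $G$ is also critically $3$-connected and (for $|V(G)|$ not too small) every vertex lies in a $3$-vertex-cut of $G$.

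The engine of the argument is then a matching/charging scheme. Consider the bipartite graph between the large vertices and their degree-$3$ neighbours; because each large vertex has at least four neighbours while each degree-$3$ vertex has at most three, Hall's condition holds with slack, so one can charge each large vertex to a private degree-$3$ neighbour. The heart of the proof is to upgrade this injection to the additive surplus of~$3$: the critical condition supplies, for each large vertex, a $3$-cut through it whose two other vertices are forced to be degree-$3$ vertices, and tracking these cuts together with the leaves of the forest $F$ should simultaneously control the edges of $F$ between large vertices and produce the three extra, uncharged degree-$3$ vertices. I expect this step---converting local cut structure into a global injection with surplus three---to be the main obstacle, and the place where super-minimality, as opposed to mere minimality, is genuinely used.

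To control the additive constant rigorously and to handle the configurations with few vertices, I would organise the whole argument as an induction on $|V(G)|$ built on a generation theorem for super-minimally $3$-connected graphs, in the spirit of the reductions used for minimally $3$-connected graphs in the proof of Theorem~\ref{thm:Halin_main}. The base cases are the smallest super-minimally $3$-connected graphs---$K_4$, the triangular prism, $K_{3,3}$, and the small wheels---for which $a \ge b + 3$ is verified directly, and each reduction step must be shown to leave $a - b$ at least~$3$. Establishing the correct list of reductions and verifying that each preserves super-minimality is the most delicate bookkeeping; I would expect the tight examples of Theorem~\ref{thm:main}, those with $a = b + 3$, to pin down precisely which operations occur and to guide the case analysis.
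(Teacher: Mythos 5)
Your reformulation of the bound as $v_3(G)\ge v_+(G)+3$ is correct, and you correctly identify Mader's forest lemma (Lemma~\ref{lem:Mader_induce_forest}) and critical $3$-connectivity as ingredients; but the two load-bearing steps of your plan do not hold up. First, the Hall's-condition claim is false as justified: large vertices may be adjacent to one another (the edges of the forest $F$), so a set $S$ of large vertices is only guaranteed $\sum_{z\in S}d_G(z)-2|E(F[S])|\ge 4|S|-2(|S|-1)=2|S|+2$ edges into $V_3(G)$, whence $|N(S)\cap V_3(G)|\ge (2|S|+2)/3$, which is smaller than $|S|$ as soon as $|S|\ge 3$. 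So Hall's condition does not ``hold with slack''; indeed for merely minimally $3$-connected graphs the desired matching can fail to exist at all, since Halin's bound is tight with $v_+(G)>v_3(G)$ in large extremal examples. Any saturating injection must therefore already use super-minimality in an essential way, so it cannot serve as the easy first step of a charging scheme. Second, criticality gives a $3$-cut through every vertex, but nothing forces the other two vertices of that cut to have degree three; the paper obtains a conclusion of this flavor only in a very special configuration (for a vertex $h$ of a $3$-connected bipartite subgraph $H$ with $H-h$ internally $3$-connected, via the notion of $H$-private vertices and Lemma~\ref{lem:H-private}), and only after substantial preparation. As stated, this step of your argument is an unsupported assertion, not a lemma one could expect to prove directly.

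The fallback inductive scheme has a structural problem as well: it presupposes a generation theorem for super-minimally $3$-connected graphs, and no such theorem is available. The natural reduction move for minimally $3$-connected graphs --- contracting an edge of order at least four, Lemma~\ref{lem:Halin_contract_ord4} --- does not preserve super-minimality (see Figure~\ref{fig_not_sm3c}), so the class is not closed under the standard reductions, and building a replacement reduction scheme is a major obstacle rather than bookkeeping. The paper avoids all of this by a different route: it contracts the entire forest $F$ at once to get $G'=G/E(F)$, shows $G'$ is still minimally and critically $3$-connected (Lemma~\ref{lem:m3c and c3c}), partitions $V_3(G)$ according to the number of large neighbors, and proves the key inequality $t_3\le 2v_+(G')-5$ by applying a bespoke bipartite lemma (Lemma~\ref{lem:2n-4}): if $t_3$ were larger, the bipartite graph on $(V_+(G'),T_3)$ would contain either a $K_{3,2}$-subgraph or a $3$-connected subgraph $H$ with $H-h$ internally $3$-connected, and each alternative contradicts critical $3$-connectivity. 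Simple counting then finishes the proof. If you want to salvage your outline, the place to invest is a correct substitute for that key inequality; neither Hall's theorem nor per-vertex $3$-cuts will deliver it.
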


We demonstrate that the bound in Theorem~\ref{thm:main} is tight by constructing an infinite family of super-minimally $3$-connected graphs $G$ each having exactly $\frac{|V(G)| + 3}{2}$ vertices of degree three. Theorems~\ref{thm:uniformly 3-con} and~\ref{thm:main} verify the following conjecture when $k=3$.

\begin{conjecture}\label{conj:one}
    For every integer $k\geq 3$, there are constants $a_k$ and $b_k$ such that
    \begin{enumerate}
        \item[(\romannum{1})] every super-minimally $k$-connected graph has at least $a_k|V(G)|$ degree-$k$ vertices,
        \item[(\romannum{2})] every uniformly $k$-connected graph has at least $b_k|V(G)|$ degree-$k$ vertices, and
        \item[(\romannum{3})] $\frac{k-1}{2k-1}< a_k< b_k$.
    \end{enumerate}
    
\end{conjecture}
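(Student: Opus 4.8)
The plan is to establish all three parts in concert by pushing the structural hierarchy of Lemma~\ref{lem:inclusion lemma} into refinements of Mader's counting argument (Theorem~\ref{thm:Mader_main}). The starting observation is that a super-minimally $k$-connected graph is not only minimally $k$-connected but also \emph{critically} $k$-connected: since $G-v$ is a proper subgraph for every vertex $v$, no such $G-v$ can be $k$-connected. Hence Mader's bound $\frac{(k-1)|V(G)|+2k}{2k-1}$ already applies, and part~(\romannum{1}) together with the first inequality of part~(\romannum{3}) is really a \emph{strict} improvement of that bound once criticality is imposed. I would build this improvement on the forest lemma underlying Mader's proof — that every cycle of a minimally $k$-connected graph meets a degree-$k$ vertex, so the vertices of degree exceeding $k$ induce a forest $F$ — and then feed in the critical-connectivity constraint: for each high-degree vertex $v$, the failure of $G-v$ to be $k$-connected forces a $(k-1)$-separator in $G-v$ near $v$, restricting how high-degree vertices may attach within $F$. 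A discharging scheme on $F$ that charges each high-degree vertex against this forced local separation should then yield $a_k=\frac{k-1}{2k-1}+\varepsilon_k$ for an explicit $\varepsilon_k>0$.

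For part~(\romannum{2}) and the second inequality of part~(\romannum{3}), I would exploit that uniformly $k$-connected graphs form a still smaller subclass by Lemma~\ref{lem:inclusion lemma}(\romannum{1}), so any constant valid for the super-minimal class is automatically valid here; the content is once more strictness, namely $b_k>a_k$. The natural tool is a recursive, constructive description of uniformly $k$-connected graphs generalizing the generation theorem of G\"{o}ring, Hofmann, and Streicher behind Theorem~\ref{thm:uniformly 3-con}. The aim is to show that each building block in such a construction must introduce new degree-$k$ vertices at a rate strictly exceeding $a_k$, because the uniformity requirement — exactly $k$ internally disjoint paths between every pair of vertices — forbids the locally dense configurations that a merely super-minimal graph may still contain.

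Establishing strictness of both inequalities in part~(\romannum{3}) demands matching extremal constructions on each side of the super-minimal class. For $a_k>\frac{k-1}{2k-1}$, I would verify that the graphs attaining Mader's bound fail criticality, typically because they contain a vertex whose deletion leaves a $k$-connected graph, forcing the super-minimal constant to be bounded away from Mader's. For $a_k<b_k$, I would construct an infinite family of super-minimally $k$-connected graphs that are \emph{not} uniformly $k$-connected and whose proportion of degree-$k$ vertices tends to $a_k$, thereby witnessing that uniform connectivity is genuinely more restrictive; the extremal family realizing the bound of Theorem~\ref{thm:main} is the $k=3$ prototype to generalize. Throughout, the base case $k=3$ is already settled by Theorems~\ref{thm:uniformly 3-con} and~\ref{thm:main}, giving $a_3=\tfrac{1}{2}$ and $b_3=\tfrac{2}{3}$ against Mader's $\tfrac{2}{5}$, which both fixes the intended pattern and serves as a sanity check on the general constants.

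The main obstacle is obtaining, for every $k$, a structure theorem for super-minimally $k$-connected graphs sharp enough to drive the discharging argument: the clean tools available at $k=3$ — Halin's forest lemma in its tight form and the G\"{o}ring--Hofmann--Streicher generation theorem — are not yet matched in strength for general $k$, and the critical-connectivity input, while qualitatively powerful, must still be converted into a globally usable edge- or degree-count. I expect the single hardest step to be the strict separation $a_k<b_k$ uniformly in $k$, since it requires simultaneously a lower bound for the uniform class that is robust as $k$ grows and a family of super-minimal-but-not-uniform graphs certifying that the smaller constant $a_k$ cannot be upgraded — in effect, quantifying precisely how much uniform connectivity outstrips super-minimal connectivity.
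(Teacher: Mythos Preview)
The statement you are proposing to prove is labelled \emph{Conjecture} in the paper, not \emph{Theorem}. The paper offers no proof of it for general $k$; it merely remarks that Theorems~\ref{thm:uniformly 3-con} and~\ref{thm:main} verify the case $k=3$, giving $a_3=\tfrac12$ and $b_3=\tfrac23$ against Mader's $\tfrac25$. There is therefore no ``paper's own proof'' to compare your proposal against.

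Read on its own terms, your proposal is a research outline, not a proof. Each of the three load-bearing steps is left entirely open: you do not specify the discharging rules that would convert criticality into a quantitative gain $\varepsilon_k>0$ over Mader's constant; you invoke a ``recursive, constructive description of uniformly $k$-connected graphs generalizing the generation theorem of G\"oring, Hofmann, and Streicher,'' but no such generation theorem is known for $k\ge 4$; and the strict separation $a_k<b_k$ is reduced to the existence of an infinite super-minimal-but-not-uniform family attaining $a_k$, which you do not construct. You yourself identify these as obstacles in your final paragraph. That is an honest assessment, but it also means the proposal, as written, does not resolve the conjecture for any $k\ge 4$.
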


\subsection{Maximum number of edges}

Another natural problem is to determine the maximum number of edges in a minimally $k$-connected graph on $n$ vertices. For the cases $k=2$ and $k=3$, these maxima were determined by Dirac~\cite[(7)]{Dirac} and Halin~\cite[(7.6)]{Halin-density}, respectively.

\begin{theorem}
    If $G$ is a minimally $2$-connected graph with $|V(G)| \ge 4$, then
    \[
        |E(G)| \le 2|V(G)| - 4.
    \]
    Moreover, equality holds if and only if $G$ is isomorphic to $K_{2,\,|V(G)|-2}$.
\end{theorem}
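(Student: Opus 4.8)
The plan is to show first that minimal $2$-connectivity forces a strong sparsity property---chordlessness---and then to convert an ear decomposition into the edge count. Throughout write $n = |V(G)|$. I would begin by proving that if $G$ is minimally $2$-connected then no cycle of $G$ has a chord. Indeed, suppose $xy$ were a chord of a cycle $C$; I claim $G \setminus xy$ is still $2$-connected, contradicting minimality. The cycle $C$ splits into two $x$--$y$ arcs, each of length at least two, so for every vertex $z$ at least one arc avoids $z$; hence $x$ and $y$ remain connected in $(G \setminus xy) - z$, and since $G - z$ was connected so is $(G \setminus xy) - z$. Thus $G \setminus xy$ has no cut vertex, and minimality fails. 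This reduces the theorem to bounding the edges of a chordless $2$-connected graph.

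Next I would show that a chordless $2$-connected graph with $n \ge 4$ is triangle-free, hence has girth at least $4$. If $abc$ were a triangle, then $G - a$ is connected (by $2$-connectivity), so it contains a $b$--$c$ path $Q$; taking $Q$ of length at least two, the cycle formed by $Q$ together with the edges $ba$ and $ac$ has length at least four, and the edge $bc$ joins two of its non-consecutive vertices, i.e. $bc$ is a chord---a contradiction. The one degenerate configuration, where the only $b$--$c$ connection avoiding $a$ is the edge $bc$ itself, is handled by the same device applied at another triangle vertex. In particular $G$ now contains a cycle of length at least four.

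I would then fix an open ear decomposition $C_0, P_1, \dots, P_k$ of $G$ starting from a cycle $C_0$ of length $c \ge 4$, which exists by the previous paragraph. A length-$1$ ear is an edge between two vertices of an already-$2$-connected intermediate subgraph, and those two vertices lie on a common cycle there, so such an edge would be a chord; hence every ear $P_i$ has length $\ell_i \ge 2$. Counting gives $|E(G)| = c + \sum_{i=1}^k \ell_i$ and $n = c + \sum_{i=1}^k (\ell_i - 1)$, so $|E(G)| = n + k$. Since $c \ge 4$ and each ear adds at least one new vertex, $n \ge c + k \ge 4 + k$, whence $k \le n - 4$ and $|E(G)| \le 2n - 4$.

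For the equality case I would induct on $n$, peeling off the degree-$2$ internal vertex $v$ of the last ear. Equality forces $c = 4$ and every $\ell_i = 2$, so $G - v$ is $2$-connected (it is an initial segment of the ear decomposition), is chordless (every subgraph of a chordless graph is chordless), and has $2(n-1) - 4$ edges; by induction $G - v \cong K_{2,\,n-3}$. The main obstacle is the final step: proving that the two neighbors of $v$ must be the two hubs of this $K_{2,\,n-3}$. I expect to force this using chordlessness---if $v$ were joined to two (necessarily non-adjacent) leaf vertices $x_i, x_j$, then for $n \ge 5$ a third leaf $x_k$ and the hubs $a, b$ produce the $6$-cycle $a\,x_i\,v\,x_j\,b\,x_k$, on which the edge $a x_j$ is a chord, a contradiction; joining $v$ to a hub and a leaf creates a triangle. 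Thus $v$ attaches to the two hubs and $G \cong K_{2,\,n-2}$. Carefully verifying that no admissible attachment other than the two hubs survives chordlessness, together with the base case $n = 4$ (where $K_{2,2} = C_4$ is the unique minimally $2$-connected graph), is the delicate part of the argument.
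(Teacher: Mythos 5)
The paper itself contains no proof of this statement: it is quoted as Dirac's theorem (cited to Dirac's paper), so there is no internal argument to compare against, and your proposal supplies what the paper treats as a black box. Your route is the classical one and it is sound: minimal $2$-connectivity forbids chords of cycles (deleting a chord preserves $2$-connectivity via the two arcs of the cycle); a chordless $2$-connected graph on at least four vertices is triangle-free; an open ear decomposition starting from a cycle of length $c\geq 4$ in which every ear has an internal vertex (a length-one ear would be a chord of a cycle of the previous stage) gives $|E(G)|=n+k$ with $k\leq n-4$, where $n=|V(G)|$; and the equality case follows by peeling the internal vertex of the last ear and inducting over chordless $2$-connected graphs, which is the right class to induct on since chordlessness is subgraph-hereditary.

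Two small points need patching. First, in the triangle-freeness step, the degenerate case (where the only $b$--$c$ connection in $G-a$ is the edge $bc$ itself) is dismissed with ``apply the same device at another triangle vertex''; to justify that some apex works you need an argument such as the fan lemma: since $n\geq 4$ there is a vertex $d$ outside the triangle, and a $2$-fan from $d$ to the triangle yields a path of length at least two between two triangle vertices that avoids the third, so that third vertex serves as a non-degenerate apex. Second, in the equality case your six-cycle argument requires a third leaf, hence $n\geq 6$, not $n\geq 5$ as written; at $n=5$ the graph $G-v$ is $K_{2,2}$, whose two sides of the bipartition are interchangeable, so attaching $v$ to two non-adjacent vertices yields $K_{2,3}$ in every case and the conclusion is unaffected. (You also do not verify the trivial converse that $K_{2,\,n-2}$ is minimally $2$-connected with $2n-4$ edges, but that is immediate, since deleting any edge leaves a vertex of degree one.) With these repairs the proof is complete.
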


\begin{theorem}\label{thm:m3c-density}
    If $G$ is a minimally $3$-connected graph with $|V(G)| \ge 7$, then
    \[
        |E(G)| \le 3|V(G)| - 9.
    \]
    Moreover, if $|V(G)| \ge 8$, equality holds if and only if $G$ is isomorphic to $K_{3,\,|V(G)|-3}$.
\end{theorem}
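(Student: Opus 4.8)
The plan is to prove the upper bound $|E(G)| \le 3|V(G)| - 9$ for a minimally $3$-connected graph $G$ on $n = |V(G)| \ge 7$ vertices, and then separately handle the equality case. I would work with the degree sequence and exploit the structure that minimal $3$-connectivity imposes on the set of degree-$3$ vertices. Write $V_3$ for the set of vertices of degree exactly $3$ and $V_{\ge 4} = V(G) \setminus V_3$ for the rest. By the handshake lemma, $2|E(G)| = \sum_{v} d(v) = 3|V_3| + \sum_{v \in V_{\ge 4}} d(v)$, so the key to bounding $|E(G)|$ from above is to bound the number of edges incident to high-degree vertices, which in turn means forcing $V_3$ to be large and well-distributed. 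Halin's degree-three theorem (Theorem~\ref{thm:Halin_main}) already guarantees $|V_3| \ge \tfrac{2n+6}{5}$, but this alone is too weak to reach $3n-9$; the extra leverage must come from the fact that in a minimally $3$-connected graph the degree-$3$ vertices are \emph{spread out} in a strong sense.

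\medskip

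The central structural input I would use is the classical fact, due to Halin and Mader, that in a minimally $k$-connected graph the vertices of degree $k$ form a dominating-type configuration: every cycle (indeed every circuit) must contain a vertex of degree $k$, equivalently the subgraph induced by $V_{\ge k+1}$ is a forest. For $k = 3$ this says that $G[V_{\ge 4}]$ is acyclic, so it has at most $|V_{\ge 4}| - 1$ edges. The proof strategy is then a clean double count. First I would partition $E(G)$ into three classes: edges inside $V_{\ge 4}$, edges inside $V_3$, and edges between the two sets. Edges inside $V_{\ge 4}$ number at most $|V_{\ge 4}| - 1$ by the forest property. Each vertex of $V_3$ contributes exactly $3$ to the degree sum, accounting for at most $3|V_3|$ edge-endpoints among the second and third classes. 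Combining these with the handshake identity and solving the resulting inequality in terms of $n$ and $|V_3|$ should, after eliminating $|V_3|$ using the forest bound together with the lower bound $d(v) \ge 4$ on $V_{\ge 4}$, collapse to $|E(G)| \le 3n - 9$; the constant $-9$ emerges precisely from the $3$-connectivity forcing $|V_{\ge 4}| \ge $ a small constant together with the acyclicity of $G[V_{\ge 4}]$.

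\medskip

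I expect the main obstacle to be establishing, rather than citing, the acyclicity of $G[V_{\ge 4}]$ at the exact level of generality needed, and controlling the boundary terms tightly enough to land on the sharp constant $-9$ rather than a weaker $3n - O(1)$. The cleanest route is to argue that if $C$ were a cycle lying entirely in $V_{\ge 4}$, then every edge $e$ of $C$ lies on a cycle through two vertices of degree at least $4$, and one can delete $e$ while preserving $3$-connectivity (since the two endpoints retain degree $\ge 3$ and the local Menger-type connectivity survives), contradicting minimality; making this deletion argument airtight requires the standard but delicate analysis of $3$-cuts in $G \setminus e$. A convenient alternative, if one prefers to avoid re-deriving this lemma, is to induct on $n$ using the reduction operations (edge contraction / suppression of degree-$3$ vertices) that preserve minimal $3$-connectivity, verifying the base case $n = 7$ directly; the inductive step must check that each reduction decreases $|E| - 3|V|$ by the right amount.

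\medskip

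For the equality characterization when $n \ge 8$, the plan is to trace back through the inequalities and determine when each is tight. Equality in the forest bound forces $G[V_{\ge 4}]$ to be a spanning tree of $V_{\ge 4}$ with the right number of components, and equality in the degree constraints forces every vertex of $V_{\ge 4}$ to have degree exactly $4$ while every vertex of $V_3$ sends all three of its edges into $V_{\ge 4}$. Pushing these simultaneous tightness conditions, together with $3$-connectivity and simplicity, I would argue that $V_{\ge 4}$ must in fact reduce to a single part of size $3$ with no internal edges and that $V_3$ is an independent set each of whose vertices is joined to all three vertices of $V_{\ge 4}$, which is exactly the complete bipartite graph $K_{3,\,n-3}$. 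The hypothesis $n \ge 8$ is used to rule out the small sporadic extremal graphs (such as $K_{3,3}$-like or prism-type configurations) that can also attain equality when $n = 7$, so I would isolate and dispose of those cases by hand.
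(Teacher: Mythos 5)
A preliminary remark: the paper does not prove Theorem~\ref{thm:m3c-density} at all---it is quoted as Halin's result---so your proposal can only be measured against the statement itself, not against an internal argument of the paper. Measured that way, it has a genuine gap.

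The gap is that your counting scheme provably cannot reach the constant $-9$. Write $n_4 = |V_{\ge 4}|$ and let $E_{33}$, $E_{34}$, $E_{44}$ count the edges inside $V_3$, between the two sides, and inside $V_{\ge 4}$. Summing degrees over $V_3$ gives $E_{34} = 3|V_3| - 2E_{33}$, hence
\[
|E(G)| \;=\; 3|V_3| - E_{33} + E_{44},
\]
and the forest property (Lemma~\ref{lem:Mader_induce_forest}) gives $E_{44} \le n_4 - 1$, so your ingredients yield only $|E(G)| \le 3|V(G)| - 2n_4 - 1$, which is at most $3|V(G)| - 9$ only when $n_4 \ge 4$. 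Your assertion that $3$-connectivity forces $|V_{\ge 4}|$ to be at least a small constant is false: there are cubic minimally $3$-connected graphs ($K_{3,3}$, the Petersen graph), and---fatally for the plan---the extremal graph $K_{3,\,n-3}$ itself has $n_4 = 3$. Rearranging, the desired bound is equivalent to $E_{44} \le E_{33} + 3n_4 - 9$. The cases $n_4 \in \{0,1\}$ do close by easy counting (each degree-$3$ vertex then has at least two neighbors inside $V_3$), but for $n_4 = 3$ you must show $E_{44} \le E_{33}$, and for $n_4 = 2$ that $E_{33} \ge E_{44} + 3$, and neither follows from acyclicity plus handshaking. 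They require genuine structural input, for instance Lemma~\ref{lem:bollobas}: adjacent vertices of a minimally $3$-connected graph are joined by exactly three internally disjoint paths, so two adjacent vertices of $V_{\ge 4}$ have at most two common neighbors; this is what rules out configurations such as $K_{3,\,n-3}$ with an extra edge inside the $3$-side, and one still needs separate small-case arguments when $n \in \{7,8\}$ and $n_4 = 2$. These are not boundary nuisances: the configuration $n_4 = 3$, $E_{33} = E_{44} = 0$ is exactly where equality is attained, so the missing cases are the heart of the theorem.

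Your equality analysis has the same defect and is internally inconsistent. You claim that equality forces the forest bound to be tight (so $G[V_{\ge 4}]$ is a spanning tree) and forces every vertex of $V_{\ge 4}$ to have degree exactly $4$; but in $K_{3,\,n-3}$ the set $V_{\ge 4}$ induces no edges whatsoever and its three vertices have degree $n-3 \ge 5$. The tightness conditions you propose to push are therefore contradicted by the unique graph you are supposed to arrive at, which signals that the chain of inequalities you envisage is not the one that becomes tight. Your alternative suggestion---induction via contractions preserving minimal $3$-connectivity, in the spirit of Lemma~\ref{lem:Halin_contract_ord4}---is a more promising route, but as written it is a one-sentence placeholder rather than an argument.
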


For $k=2$, Proposition~\ref{prop:sm2c} resolves the analogous problem for super-minimally $2$-connected graphs. Our second main result is the following analogue of Theorem~\ref{thm:m3c-density} for super-minimally $3$-connected graphs.

\begin{theorem}\label{thm:main-density}
If $G$ is a super-minimally $3$-connected graph, then
\[
|E(G)| \le 2|V(G)| - 2.
\]
Moreover, equality is attained if and only if $G$ is a wheel with at least three spokes.
\end{theorem}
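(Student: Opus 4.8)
The plan is to combine the forest structure forced by minimal $3$-connectivity with the far stronger hypothesis that the only $3$-connected subgraph of $G$ is $G$ itself. Write $n=|V(G)|$, let $V_3$ be the set of degree-$3$ vertices, put $t=|V_3|$, and let $s=n-t$ be the number of vertices of degree at least four. By Lemma~\ref{lem:inclusion lemma}(\romannum{2}) the graph $G$ is minimally $3$-connected, so by a theorem of Mader the vertices of degree exceeding three induce a forest $F$; say $F$ has $c$ components and hence $s-c$ edges. Let $e_3$ be the number of edges with both ends in $V_3$ and let $m$ be the number of edges joining $V_3$ to $V(F)$. Counting endpoints at degree-$3$ vertices gives $3t=2e_3+m$, and counting all endpoints gives $2|E(G)|=3t+2(s-c)+m$; eliminating $m$ yields the identity
\[
|E(G)|=3t-e_3+s-c .
\]
Thus the bound $|E(G)|\le 2n-2=2(s+t)-2$ is \emph{equivalent} to the clean inequality
\[
t-e_3\le s+c-2 ,
\]
which I shall call $(\star)$. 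It is worth emphasising that $(\star)$ is false for general minimally $3$-connected graphs: for $K_{3,\,n-3}$ with $n\ge 8$ one has $t-e_3=n-3$ while $s+c-2=4$. Since $K_{3,\,n-3}$ contains the $3$-connected graph $K_{3,3}$ as a proper subgraph, this is exactly the configuration that super-minimality rules out, and it shows that the hypothesis must enter precisely in establishing $(\star)$.

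The heart of the proof, and the step I expect to be the main obstacle, is to deduce $(\star)$ from the absence of proper $3$-connected subgraphs. Writing $c_3$ for the number of components and $\beta_3$ for the cycle rank of $G[V_3]$, one has $t-e_3=c_3-\beta_3$, so $(\star)$ asserts that the degree-$3$ part is ``cycle-rich'' with few components---equivalently, that the degree-$3$ vertices carry most of their edges among themselves rather than up to the small set $V(F)$. I would argue by contradiction: if $(\star)$ failed, then $m=3t-2e_3$ would be large while $|V(F)|=s$ stays bounded, so a pigeonhole argument on the edges from $V_3$ into $V(F)$ should produce a high-degree vertex whose neighbourhood contains a cycle through degree-$3$ vertices (a proper wheel), or two high-degree vertices with three common degree-$3$ neighbours (a proper $K_{3,3}$), or a triangle among high-degree vertices (a proper $K_4$). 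Each is a proper $3$-connected subgraph, contradicting super-minimality. Turning this into a rigorous extraction is the delicate part: I would use the $2$-separations that minimal $3$-connectivity attaches to each edge of $F$ (the engine of Mader's forest theorem) to control how the trees of $F$ and the components of $G[V_3]$ are attached, and then analyse the graph one such piece at a time.

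An appealing alternative that bypasses $(\star)$ altogether is an inductive reduction mirroring the growth $W_{m-1}\to W_m$. The idea is to locate a degree-$3$ vertex $x$ with neighbours $a,b,c$, delete $x$, and insert a single edge between two of them, so that exactly one vertex and two edges are removed; if the resulting graph $G'$ can be shown to remain super-minimally $3$-connected, then induction gives $|E(G)|=|E(G')|+2\le 2(n-1)-2+2=2n-2$. Here the obstacle merely shifts: one must prove that such a reduction always exists and that it preserves super-minimality, i.e. that adding the new edge creates no proper $3$-connected subgraph.

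It remains to characterise equality, which I would read off from the tightness of the argument for $(\star)$: the subgraph-extraction should be strict unless $G$ is $3$-regular or has a unique vertex $h$ of degree exceeding three. In the first case $2|E(G)|=3n$, so $|E(G)|=2n-2$ forces $n=4$ and $G=K_4=W_3$. In the second, $s=c=1$ and equality in $(\star)$ gives $e_3=t$, so $G-h=G[V_3]$ has as many edges as vertices; being $2$-connected (as $G$ is $3$-connected) it is a single cycle, each of whose vertices sends exactly one edge to $h$, and hence $G$ is a wheel. Conversely every wheel $W_m$ with $m\ge 3$ has exactly $2m=2|V(W_m)|-2$ edges and is super-minimally $3$-connected, since deleting any edge creates a vertex of degree two, deleting the hub leaves a cycle, deleting a rim vertex leaves a fan with two degree-$2$ vertices, and no proper subgraph of $W_m$ has minimum degree three. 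This yields the stated characterisation of the extremal graphs.
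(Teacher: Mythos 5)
Your reduction of the bound to the inequality $(\star)$: $t-e_3\le s+c-2$ is correct, as is your verification that wheels attain equality and are super-minimally $3$-connected (the propagation argument that no proper subgraph of a wheel has minimum degree three is clean). But the proposal has a genuine gap at exactly the point you yourself flag as ``the delicate part'': $(\star)$ is never proved, and the extraction sketch offered in its place would fail. Concretely: two high-degree vertices with three common degree-$3$ neighbours span a $K_{2,3}$, which is not $3$-connected (you would need \emph{three} high-degree vertices with three common neighbours); and, more fundamentally, failure of $(\star)$ says only that $G[V_3]$ has many components relative to $|V(F)|$ --- it is entirely consistent with $G[V_3]$ being a forest and with $V(F)$ being a stable set, in which case \emph{none} of your three target configurations (a cycle of degree-$3$ vertices inside one neighbourhood, a triangle among high-degree vertices, a ``$K_{3,3}$'') exists, and the pigeonhole has nothing to grab. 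Indeed the paper shows that a minimal counterexample has precisely this forbidding shape: it is triangle-free, every edge meets a degree-$3$ vertex (so $F$ is edgeless and $c=s$), each component of $G-V_+(G)$ is a tree, and each such component is joined to each high-degree vertex by at most one edge. Producing a proper $3$-connected subgraph under these conditions is the real content of the theorem; in the paper it requires the inductive bipartite lemmas of Section~3 (culminating in Corollary~\ref{cor:y>= 2x-2}, which lifts back to $G$ via the forest-contraction lemma) to get $k\le 2|Z|-3$, which is the strict form of your $(\star)$, and it requires the enhanced-deletion, cleaving and bridging machinery of Section~5, together with Kriesell's theorem (Lemma~\ref{lem:kriesell}), to establish the structural sublemmas in the first place. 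None of this is a routine pigeonhole.

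The equality characterisation inherits the same gap one level up: the claim that equality in $(\star)$ forces $G$ to be $3$-regular or to have a unique vertex of degree exceeding three is ``read off from the tightness'' of an argument that was never given (granting that claim, your cycle-plus-hub analysis for $s=c=1$ is correct). Note also that in the paper the inequality and the extremal case cannot be decoupled in the way your outline assumes: the induction on $|V(G)|$ repeatedly invokes the full statement, using Lemma~\ref{lem:briding a wheel} to recognise that when a reduced graph $G\dd xy$ meets the bound with equality it is a wheel and hence $G$ was one too. Your alternative route --- delete a degree-$3$ vertex and add an edge between two of its neighbours --- is in fact close to the paper's actual mechanism (the enhanced deletion $G\dd xy$), but the existence of such a reduction and the preservation of super-minimality are precisely what the bridging and cleaving lemmas are for; as you concede, this shifts the obstacle rather than removing it.
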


It is straightforward to verify that every wheel with at least three spokes is uniformly $3$-connected. A direct consequence of Lemma~\ref{lem:inclusion lemma} and Theorem~\ref{thm:main-density} is the following result, originally proved by Xu~\cite[Theorem~3.2]{Xu2024}.

\begin{corollary}
If $G$ is a uniformly $3$-connected graph, then
\[
|E(G)| \le 2|V(G)| - 2.
\]
Moreover, equality is attained if and only if $G$ is a wheel with at least three spokes.
\end{corollary}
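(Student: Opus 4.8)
The plan is to prove the inequality by a degree-counting reduction and then to isolate the single structural estimate that carries all the content. Write $n = |V(G)|$ and $m = |E(G)|$, let $W$ denote the set of vertices of degree at least $4$, let $V_3$ denote the set of degree-$3$ vertices, and put $t = |V_3|$, so that $t = n - |W|$ since $G$ is $3$-connected and hence has minimum degree at least $3$. Counting degrees gives $2m = 3t + \sum_{v \in W}\deg_G(v)$, and substituting $t = n - |W|$ shows that the desired bound $m \le 2n - 2$ is equivalent to
\[
\sum_{v \in W}\deg_G(v) \le n + 3|W| - 4.
\]
Splitting each such degree into edges inside $W$ and edges to $V_3$, write $e_{WW}$ for the number of edges of $G[W]$ and $e_{3W}$ for the number of edges between $W$ and $V_3$, so that $\sum_{v \in W}\deg_G(v) = 2e_{WW} + e_{3W}$. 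It therefore suffices to establish the two estimates $e_{WW} \le |W| - 1$ and $e_{3W} \le t + 2|W| - 2$, since adding them yields $2e_{WW} + e_{3W} \le t + 4|W| - 4 = n + 3|W| - 4$.

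For the first estimate I would invoke that $G$, being super-minimally $3$-connected, is in particular minimally $3$-connected by Lemma~\ref{lem:inclusion lemma}(\romannum{2}), together with Mader's theorem that in a minimally $3$-connected graph every cycle contains a vertex of degree $3$; equivalently, the induced subgraph $G[W]$ is a forest, whence $e_{WW} \le |W| - 1$. The second estimate is the crux, and is where super-minimality must be used in full force: the bound $e_{3W} \le t + 2|W| - 2$ already fails for the minimally $3$-connected graphs $K_{3,\,n-3}$, so it cannot follow from minimality alone. I would prove it by classifying each degree-$3$ vertex according to how many of its three neighbours lie in $W$, writing $a_i$ for the number with exactly $i$ neighbours in $W$. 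Since $t = a_0 + a_1 + a_2 + a_3$ and $e_{3W} = a_1 + 2a_2 + 3a_3$, the estimate is equivalent to $a_2 + 2a_3 \le a_0 + 2|W| - 2$, so the task is to show that degree-$3$ vertices with two or three high-degree neighbours are tightly constrained. The mechanism I would exploit is that such a vertex, together with its high-degree neighbours, yields either a small separator or a configuration from which one can extract a proper $3$-connected subgraph, contradicting super-minimality.

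Combining the two estimates gives $m \le 2n - 2$. The hard part, as indicated, is the cross-edge estimate $e_{3W} \le t + 2|W| - 2$: controlling degree-$3$ vertices with multiple high-degree neighbours requires exploiting that no proper subgraph of $G$ is $3$-connected, and in particular that $G$ is critically $3$-connected, so that every vertex lies in a $3$-separator. Organising these local obstructions into a single global count is the main technical burden, and I expect this step to rely on the same structural analysis developed to prove Theorem~\ref{thm:main}.

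Finally, for the characterisation of equality I would trace back through the chain of inequalities. Equality in $m \le 2n - 2$ forces equality in both estimates, so $G[W]$ is a single tree on $W$ and the cross-edge bound is tight. I would first argue that these tightness conditions force $|W| = 1$. Once $W = \{w\}$ is a single vertex, tightness gives $\deg_G(w) = t = n - 1$, so $w$ is adjacent to every other vertex, and each remaining vertex then has exactly two neighbours in $V_3$; hence $G - w$ is $2$-regular, and since $G$ is $3$-connected it must be connected, so $G - w$ is a single cycle and $G$ is a wheel. Conversely, every wheel with at least three spokes is readily checked to be super-minimally $3$-connected and to satisfy $m = 2n - 2$, completing the characterisation.
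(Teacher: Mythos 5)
Your counting reduction is arithmetically correct, but it is not a proof: all of the content has been pushed into the cross-edge estimate $e_{3W}\le t+2|W|-2$, which you explicitly leave unproved (``the main technical burden''), and nothing you point to can supply it. The analysis behind Theorem~\ref{thm:main} does not: in that proof, the relation between $e_{3W}$ and $|W|$ is inequality~(\ref{eq:three}), which is a \emph{lower} bound $2v_+(G)+2v_+(G')\le e_{3W}$, and the only upper bound produced there is $t_3\le 2v_+(G')-5$, i.e.\ a bound on your $a_3$ alone; your inequality $a_2+2a_3\le a_0+2|W|-2$ requires control of $a_2$, which that machinery never gives. Worse, rewriting your estimate component-by-component shows it is equivalent to $c-r\le|W|-1$, where $c$ is the number of components and $r$ the cycle rank of $G[V_3]$ (each $a_i$-vertex contributes $i-1$ to $a_2+2a_3-a_0$, and summing gives $2t-2|E(G[V_3])|=2(c-r)$). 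Compare this with what the paper actually proves in Section~5: the count $k\le 2|Z|-3$ for the components of $G-Z$, established only under a standing contradiction hypothesis ($|E(G)|\ge 2|V(G)|-2$ and $G$ not a wheel) after first forcing $Z$ to be a stable set and the components to be trees. For $|W|\ge 3$ your required bound $c\le|W|-1$ is strictly stronger than the paper's $k\le 2|Z|-3$; the paper can afford the weaker count precisely because it has $e_{WW}=0$ in that situation, whereas you only have $e_{WW}\le|W|-1$ from Lemma~\ref{lem:Mader_induce_forest}. So your split genuinely needs a new argument that the sketch does not contain, and it is not even clear your estimate is true in the stated generality. The paper's route is entirely different: Theorem~\ref{thm:main-density} is proved by induction using Kriesell's Lemma~\ref{lem:kriesell} and enhanced deletion (every edge of a minimal counterexample is $3$-contractible), triangle-freeness, cleaving (Lemmas~\ref{lem:exist compatible set} and~\ref{lem:cleaving preserves sm3c}), and Corollary~\ref{cor:y>= 2x-2}; the corollary at hand is then a two-line consequence via Lemma~\ref{lem:inclusion lemma}(\romannum{1}), together with the observation that wheels are uniformly $3$-connected.

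The equality characterisation has the same defect. ``These tightness conditions force $|W|=1$'' is asserted, not argued, and it cannot follow from cross-edge tightness alone: the super-minimally $3$-connected graph of Figure~\ref{fig_not_sm3c}(a) has $|W|=5$ and $a_2=8$, $a_0=a_1=a_3=0$, so it attains equality in your cross-edge estimate ($c-r=4=|W|-1$); any forcing argument must therefore use both tightness conditions simultaneously, and you give no mechanism for doing so. Two smaller points: the bound $e_{WW}\le|W|-1$ fails when $W=\emptyset$, so cubic graphs must be split off (harmless, since then $|E(G)|=\tfrac{3}{2}|V(G)|\le 2|V(G)|-2$); and the hypothesis that $G$ is uniformly $3$-connected is never used in your text --- you silently treat $G$ as super-minimally $3$-connected, so you should invoke Lemma~\ref{lem:inclusion lemma}(\romannum{1}) explicitly, since that is the only point where the corollary's hypothesis enters.
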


\section{Preliminaries}\label{sec:pre}

\subsection{Connectivity results for graphs}
A graph is {\it nontrivial} if it has more than one vertex. Let $G$ be a graph and let $v$ be a vertex of $G$. The {\it open neighborhood of $v$}, denoted by $N(v)$, is the set of all vertices adjacent to $v$; and the {\it closed neighborhood of $v$}, denoted by $N[v]$, is $N(v)\cup\{v\}$. A {\it vertex cut} of $G$ is a set $X$ of vertices such that $G-X$ has more connected components than $G$. Let $k$ be a nonnegative integer. A {\it $k$-separation} of $G$ is a pair of edge-disjoint subgraphs $\{G_1,G_2\}$ of $G$ for which $|V(G_1)\cap V(G_2)|=k$ such that $G_1\cup G_2=G$ and $\min\{|V(G_1)|,|V(G_2)|\}\geq k+1$. For a positive integer $k$, a graph $G$ is {\it $k$-connected} if $|V(G)| > k$ and $G$ has no vertex cut of size less than $k$. Equivalently, $G$ is {\it $k$-connected} if $|V(G)| > k$ and $G$ has no $k'$-separation for all $k' < k$. We note that, for some authors, the subgraphs $G_1$ and $G_2$ in a $k$-separation are required to have no isolated vertices. However, in the present paper, we relax this condition in order to simplify the case analysis in later proofs. For example, in Figure~\ref{fig_2-separation}, the subgraphs $G_1$ and $G_2$ form a $2$-separation of $G$ under this relaxed definition. As a consequence of this relaxation, we obtain the following result.

\begin{center}
    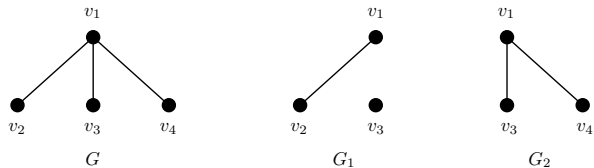
\begin{figure}[htb]
    \hbox to \hsize{
	\hfil
	\resizebox{8cm}{!}{\tikzset{every picture/.style={line width=0.75pt}} %set default line width to 0.75pt        

\begin{tikzpicture}[x=0.75pt,y=0.75pt,yscale=-1,xscale=1]
%uncomment if require: \path (0,300); %set diagram left start at 0, and has height of 300

%Shape: Ellipse [id:dp44532091329679624] 
\draw  [fill={rgb, 255:red, 0; green, 0; blue, 0 }  ,fill opacity=1 ] (99.32,78.2) .. controls (99.32,75.62) and (101.42,73.52) .. (104,73.52) .. controls (106.59,73.52) and (108.68,75.62) .. (108.68,78.2) .. controls (108.68,80.78) and (106.59,82.88) .. (104,82.88) .. controls (101.42,82.88) and (99.32,80.78) .. (99.32,78.2) -- cycle ;
%Straight Lines [id:da7441999277875054] 
\draw    (49.5,127.2) -- (104,78.2) ;
%Shape: Ellipse [id:dp09178192173852762] 
\draw  [fill={rgb, 255:red, 0; green, 0; blue, 0 }  ,fill opacity=1 ] (153.82,127.2) .. controls (153.82,124.62) and (155.92,122.52) .. (158.5,122.52) .. controls (161.08,122.52) and (163.18,124.62) .. (163.18,127.2) .. controls (163.18,129.78) and (161.08,131.88) .. (158.5,131.88) .. controls (155.92,131.88) and (153.82,129.78) .. (153.82,127.2) -- cycle ;
%Shape: Ellipse [id:dp46450765910673686] 
\draw  [fill={rgb, 255:red, 0; green, 0; blue, 0 }  ,fill opacity=1 ] (99.32,127.2) .. controls (99.32,124.62) and (101.42,122.52) .. (104,122.52) .. controls (106.59,122.52) and (108.68,124.62) .. (108.68,127.2) .. controls (108.68,129.78) and (106.59,131.88) .. (104,131.88) .. controls (101.42,131.88) and (99.32,129.78) .. (99.32,127.2) -- cycle ;
%Shape: Ellipse [id:dp8972066478564524] 
\draw  [fill={rgb, 255:red, 0; green, 0; blue, 0 }  ,fill opacity=1 ] (44.82,127.2) .. controls (44.82,124.62) and (46.92,122.52) .. (49.5,122.52) .. controls (52.08,122.52) and (54.18,124.62) .. (54.18,127.2) .. controls (54.18,129.78) and (52.08,131.88) .. (49.5,131.88) .. controls (46.92,131.88) and (44.82,129.78) .. (44.82,127.2) -- cycle ;
%Straight Lines [id:da056686599534194415] 
\draw    (104,127.2) -- (104,78.2) ;
%Straight Lines [id:da6098073240176513] 
\draw    (158.5,127.2) -- (104,78.2) ;

%Shape: Ellipse [id:dp4408337888937258] 
\draw  [fill={rgb, 255:red, 0; green, 0; blue, 0 }  ,fill opacity=1 ] (397.32,78.2) .. controls (397.32,75.62) and (399.42,73.52) .. (402,73.52) .. controls (404.59,73.52) and (406.68,75.62) .. (406.68,78.2) .. controls (406.68,80.78) and (404.59,82.88) .. (402,82.88) .. controls (399.42,82.88) and (397.32,80.78) .. (397.32,78.2) -- cycle ;
%Shape: Ellipse [id:dp8303792557527429] 
\draw  [fill={rgb, 255:red, 0; green, 0; blue, 0 }  ,fill opacity=1 ] (451.82,127.2) .. controls (451.82,124.62) and (453.92,122.52) .. (456.5,122.52) .. controls (459.08,122.52) and (461.18,124.62) .. (461.18,127.2) .. controls (461.18,129.78) and (459.08,131.88) .. (456.5,131.88) .. controls (453.92,131.88) and (451.82,129.78) .. (451.82,127.2) -- cycle ;
%Shape: Ellipse [id:dp2291442654120729] 
\draw  [fill={rgb, 255:red, 0; green, 0; blue, 0 }  ,fill opacity=1 ] (397.32,127.2) .. controls (397.32,124.62) and (399.42,122.52) .. (402,122.52) .. controls (404.59,122.52) and (406.68,124.62) .. (406.68,127.2) .. controls (406.68,129.78) and (404.59,131.88) .. (402,131.88) .. controls (399.42,131.88) and (397.32,129.78) .. (397.32,127.2) -- cycle ;
%Straight Lines [id:da7950468574465673] 
\draw    (456.5,127.2) -- (402,78.2) ;

%Straight Lines [id:da2870774634570772] 
\draw    (402,127.2) -- (402,78.2) ;

%Straight Lines [id:da3246265843327384] 
\draw    (253,127.2) -- (307.5,78.2) ;
%Shape: Ellipse [id:dp07750461113168605] 
\draw  [fill={rgb, 255:red, 0; green, 0; blue, 0 }  ,fill opacity=1 ] (248.32,127.2) .. controls (248.32,124.62) and (250.42,122.52) .. (253,122.52) .. controls (255.59,122.52) and (257.68,124.62) .. (257.68,127.2) .. controls (257.68,129.78) and (255.59,131.88) .. (253,131.88) .. controls (250.42,131.88) and (248.32,129.78) .. (248.32,127.2) -- cycle ;
%Shape: Ellipse [id:dp04733538346405408] 
\draw  [fill={rgb, 255:red, 0; green, 0; blue, 0 }  ,fill opacity=1 ] (302.82,78.2) .. controls (302.82,75.62) and (304.92,73.52) .. (307.5,73.52) .. controls (310.09,73.52) and (312.18,75.62) .. (312.18,78.2) .. controls (312.18,80.78) and (310.09,82.88) .. (307.5,82.88) .. controls (304.92,82.88) and (302.82,80.78) .. (302.82,78.2) -- cycle ;
%Shape: Ellipse [id:dp9312185560218891] 
\draw  [fill={rgb, 255:red, 0; green, 0; blue, 0 }  ,fill opacity=1 ] (302.82,127.2) .. controls (302.82,124.62) and (304.92,122.52) .. (307.5,122.52) .. controls (310.09,122.52) and (312.18,124.62) .. (312.18,127.2) .. controls (312.18,129.78) and (310.09,131.88) .. (307.5,131.88) .. controls (304.92,131.88) and (302.82,129.78) .. (302.82,127.2) -- cycle ;

% Text Node
\draw (96,55.4) node [anchor=north west][inner sep=0.75pt]    {$v_{1}$};
% Text Node
\draw (96,138.6) node [anchor=north west][inner sep=0.75pt]    {$v_{3}$};
% Text Node
\draw (41.5,138.6) node [anchor=north west][inner sep=0.75pt]    {$v_{2}$};
% Text Node
\draw (150.5,138.6) node [anchor=north west][inner sep=0.75pt]    {$v_{4}$};
% Text Node
\draw (300,138.6) node [anchor=north west][inner sep=0.75pt]    {$v_{3}$};
% Text Node
\draw (244.5,138.6) node [anchor=north west][inner sep=0.75pt]    {$v_{2}$};
% Text Node
\draw (396,138.6) node [anchor=north west][inner sep=0.75pt]    {$v_{3}$};
% Text Node
\draw (450.5,138.6) node [anchor=north west][inner sep=0.75pt]    {$v_{4}$};
% Text Node
\draw (300,55.4) node [anchor=north west][inner sep=0.75pt]    {$v_{1}$};
% Text Node
\draw (394,55.4) node [anchor=north west][inner sep=0.75pt]    {$v_{1}$};
% Text Node
\draw (97,159.4) node [anchor=north west][inner sep=0.75pt]    {$G$};
% Text Node
\draw (275,159.4) node [anchor=north west][inner sep=0.75pt]    {$G_{1}$};
% Text Node
\draw (416,159.4) node [anchor=north west][inner sep=0.75pt]    {$G_{2}$};

\end{tikzpicture}}%
	\hfil
    }
    \caption{$\{G_1,G_2\}$ is a $2$-separation of $G.$}\label{fig_2-separation}
\end{figure}
\end{center}

\begin{proposition}
    For a positive integer $k$, if $G$ is a connected graph with at least $k+1$ vertices, then the following are equivalent.
\begin{enumerate}
    \item[(\romannum{1})] $G$ is not $k$-connected.
    \item[(\romannum{2})] $G$ has a $(k-1)$-separation.
    \item[(\romannum{3})] $G$ has a vertex cut of size $k-1$.
\end{enumerate}
\end{proposition}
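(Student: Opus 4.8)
The plan is to work directly from the primary (vertex-cut) definition of $k$-connectivity, since the separation-based formulation is essentially what this proposition certifies and invoking the asserted equivalence would be circular. Because $G$ is connected with $|V(G)|\ge k+1$, we have $|V(G)|>k$, so condition (\romannum{1}) is equivalent to saying that $G$ has a vertex cut of size strictly less than $k$. I would then establish the two biconditionals (\romannum{1})$\Leftrightarrow$(\romannum{3}) and (\romannum{2})$\Leftrightarrow$(\romannum{3}). The direction (\romannum{3})$\Rightarrow$(\romannum{1}) is immediate, as a cut of size $k-1<k$ already certifies that $G$ is not $k$-connected. For (\romannum{1})$\Rightarrow$(\romannum{3}), I start with a cut $X_0$ of size $j\le k-1$; if $j=k-1$ we are done, so suppose $j\le k-2$ and set $t=(k-1)-j\ge 1$. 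Since $G-X_0$ is disconnected, I partition its vertices into two nonempty sets $A$ and $B$ with no edges between them (one component versus the union of the rest). The key count is $|A|+|B|=|V(G)|-j\ge (k+1)-j=t+2$, so there are $(|A|-1)+(|B|-1)=|A|+|B|-2\ge t$ vertices deletable while leaving at least one vertex in each of $A$ and $B$. Adding any $t$ of them to $X_0$ gives a set $X$ of size exactly $k-1$; since deleting vertices creates no edges, the surviving parts of $A$ and $B$ are nonempty and still mutually nonadjacent, so $X$ is a cut of size $k-1$.

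For (\romannum{3})$\Rightarrow$(\romannum{2}), given a cut $X$ with $|X|=k-1$ I again split $V(G)\setminus X$ into nonempty $A,B$ with no edges across, set $G_1=G[A\cup X]$, and let $G_2$ be the subgraph on $B\cup X$ consisting of all edges of $G[B\cup X]$ \emph{except} those inside $X$. Then $G_1$ and $G_2$ are edge-disjoint (their only common possible edges lie in $X$ and are assigned to $G_1$), $G_1\cup G_2=G$ because every edge of $G$ has both ends on one side, $V(G_1)\cap V(G_2)=X$ has size $k-1$, and $|V(G_i)|\ge 1+(k-1)=k$; this is a $(k-1)$-separation. For (\romannum{2})$\Rightarrow$(\romannum{3}), I take a $(k-1)$-separation $\{G_1,G_2\}$, put $X=V(G_1)\cap V(G_2)$, and note that $A=V(G_1)\setminus X$ and $B=V(G_2)\setminus X$ are nonempty because $|V(G_i)|\ge k$. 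Any $A$–$B$ edge would be an edge of $G_1\cup G_2=G$, hence of $G_1$ or $G_2$, which is impossible since one of its endpoints lies outside $V(G_1)$ and the other outside $V(G_2)$; thus $G-X$ is disconnected and $X$ is a cut of size $k-1$.

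The only genuinely delicate point is the counting in (\romannum{1})$\Rightarrow$(\romannum{3}): one must check that the hypothesis $|V(G)|\ge k+1$ leaves enough room to \emph{both} pad the cut up to size $k-1$ \emph{and} keep both sides nonempty, and this is precisely where that hypothesis is consumed. The separation constructions are otherwise routine, the main care being to assign the edges inside $X$ to a single side so that $G_1$ and $G_2$ are edge-disjoint while still covering all of $G$; here the relaxed definition, which permits isolated vertices in the parts, removes any need to argue that the resulting parts are free of isolated vertices.
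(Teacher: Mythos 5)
Your proposal is correct. Note that the paper does not actually prove this proposition: it is stated, without proof, as an immediate consequence of the relaxed definition of a $k$-separation, so there is no argument of the paper's to compare yours against---your writeup simply supplies the verification the paper treats as routine. All the essential points check out: (i)$\Leftrightarrow$(iii) via the padding argument, where the hypothesis $|V(G)|\ge k+1$ is exactly what lets you grow a cut of size $j\le k-2$ to one of size exactly $k-1$ while keeping a vertex on each side; and (iii)$\Leftrightarrow$(ii), where assigning the edges inside $X$ to a single part makes $G_1$ and $G_2$ edge-disjoint while still satisfying $G_1\cup G_2=G$ and the size requirement $\min\{|V(G_1)|,|V(G_2)|\}\ge k$. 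You are also right that this last construction is precisely where the paper's relaxation is consumed: under the stricter convention forbidding isolated vertices, the implication (iii)$\Rightarrow$(ii) is false. For instance, the star $K_{1,3}$ (the graph $G$ of Figure 1) with $k=3$ is connected on four vertices, is not $3$-connected, and has a vertex cut of size two, yet it admits no $2$-separation in the strict sense: a part with at least three vertices and no isolated vertex needs at least two edges, and two edge-disjoint such parts would require at least four of the star's three edges. This is consistent with the paper's remark that the relaxation was introduced exactly so that this proposition holds.
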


In a graph $G$, let $x$ be a vertex and let $Y$ be a subset of $V(G)-\{x\}$. A family of $k$ internally disjoint $(x,Y)$-paths whose terminal vertices are distinct is called a {\it $k$-fan} from $x$ to $Y$.

\begin{lemma}\label{lem: cleaving is 3-con}
Let $G$ be a $3$-connected graph, and let $\{A,B\}$ be a $3$-separation of $G$ with $V(A)\cap V(B)=\{x,y,z\}$. Suppose $\min\{d_A(x),d_A(y),d_A(z)\}\ge 2$ and $G_A$ is a graph obtained from $A$ by adding a new vertex $u$ adjacent to each of $x,y,z$. Then $G_A$ is $3$-connected.
\end{lemma}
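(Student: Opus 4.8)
The plan is to use the cut characterization of connectivity from the proposition above: since $|V(G_A)| = |V(A)| + 1 \ge 5 > 3$, it suffices to prove that $G_A$ has no vertex cut of size at most $2$. I would argue by contradiction, assuming that $S \subseteq V(G_A)$ is a cut with $|S| \le 2$, and then converting it into a vertex cut of $G$ of size at most $2$, contradicting the $3$-connectivity of $G$. Two structural facts drive the whole argument. First, the new vertex $u$ is adjacent precisely to $x,y,z$; and since $G = A \cup B$ with the two parts edge-disjoint and $V(A)\cap V(B) = \{x,y,z\}$, every vertex of $A$ outside $\{x,y,z\}$ lies in $V(A)\setminus V(B)$ and therefore has all of its $G$-edges inside $A$. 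Second, every component of $A$ meets $\{x,y,z\}$, for otherwise that component would be a component of $G$, contradicting connectivity; together with the edges from $u$, this also shows that $G_A$ is connected. I would then split into two cases according to whether $u\in S$.

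If $u\notin S$, then $S\subseteq V(A)\subseteq V(G)$. The vertex $u$ and all vertices of $\{x,y,z\}\setminus S$ lie in one component $D$ of $G_A - S$, so any other component $D'$ contains none of $x,y,z$. By the first fact above, the vertices of $D'$ have no edges into $B$, so $D'$ is a component of $A - S$ that is entirely cut off from $V(G)\setminus(S\cup V(D'))$ in $G$; this latter set is nonempty, as it contains a vertex of $\{x,y,z\}\setminus S$. Hence $S$ is a cut of $G$ with $|S|\le 2$, a contradiction.

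If $u\in S$, write $T = S\setminus\{u\}$, so that $|T|\le 1$ and $G_A - S = A - T$ is disconnected. Should some component of $A - T$ avoid $\{x,y,z\}$ entirely, the same reasoning as above shows that $T$ alone (of size at most $1$) separates $G$, a contradiction. Otherwise every component of $A - T$ contains a vertex of $\{x,y,z\}\setminus T$; since $|\{x,y,z\}\setminus T|\le 3$ and there are at least two components, a pigeonhole count forces some component $C_1$ to contain exactly one such vertex, say $x$. I would then take $T\cup\{x\}$, of size at most $2$, as a candidate cut of $G$: every vertex of $C_1 - x$ has all its $G$-edges inside $C_1\cup T$, so $C_1 - x$ is separated by $T\cup\{x\}$ from the rest of $G$, which contains a special vertex other than $x$.

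The crux, and the only place the degree hypothesis is used, is verifying that $C_1 - x$ is nonempty, since an empty side yields no genuine separation. This is exactly where $\min\{d_A(x),d_A(y),d_A(z)\}\ge 2$ enters: because $x$ has at least two neighbors in $A$ while $|T|\le 1$, the vertex $x$ retains a neighbor in $A - T$, which necessarily lies in $C_1$, so $|V(C_1)|\ge 2$ and $C_1 - x\neq\emptyset$. Equivalently, the hypothesis prevents $x$ from becoming isolated in $A - T$, a situation that would correspond to $x$ having degree $2$ in $G_A$ and thus obstruct $3$-connectivity outright. I expect the bookkeeping in this final case, namely organizing the distribution of $x,y,z$ among the components of $A - T$ and confirming that the residual side is nonempty, to be the main technical obstacle, whereas the case $u\notin S$ and the degenerate subcase are routine.
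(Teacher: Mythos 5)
Your proposal is correct, and it reaches the conclusion by a genuinely different mechanism than the paper. Both arguments proceed by contradiction from a small cut of $G_A$ and both split on whether the new vertex $u$ lies in the cut, but from there they diverge. The paper works with $2$-separations and invokes Menger's theorem: in the case $u \in V(C)-V(D)$ it produces a $3$-fan in $A$ from an interior vertex to $\{x,y,z\}$ and counts paths through the two cut vertices; in the case $u \in V(C)\cap V(D)$ it passes to a $1$-separation of $A$ and runs a further fan analysis, using the degree hypotheses to rule out $V(F)=\{y,z\}$. You instead transfer the cut: a component of $G_A - S$ (or of $A - T$) that avoids $\{x,y,z\}$ consists of vertices whose $G$-edges all lie in $A$, so $S$ (or $T$) is already a vertex cut of $G$; and in the remaining case a pigeonhole argument isolates a component $C_1$ meeting $\{x,y,z\}$ in a single vertex $x$, whence $T\cup\{x\}$ cuts $G$, with $d_A(x)\ge 2$ guaranteeing $C_1 - x \neq \emptyset$. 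Your route is more elementary and self-contained (no appeal to Menger's theorem), and it localizes the use of the degree hypothesis to a single nonemptiness check; the paper's fan-based argument, by contrast, stays within the separation framework used throughout the rest of the paper. One cosmetic remark: you do not actually need the cited proposition, since the paper's definition of $3$-connectivity (more than three vertices and no vertex cut of size less than three) is exactly what you verify, together with $|V(G_A)| = |V(A)|+1 \ge 5$, which follows from the requirement $\min\{|V(A)|,|V(B)|\}\ge 4$ in the definition of a $3$-separation.
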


\begin{proof}
    Suppose $\{C,D\}$ is a $2$-separation of $G_A$. Without loss of generality, we may assume that $x\in V(C)-V(D)$. Suppose $u\in V(C)-V(D)$. Then $C$ contains all of $\{x,y,z\}$, and there is a vertex $a\in V(D)-V(C)$ such that $a\in V(A)-\{x,y,z\}$. By Menger's Theorem, we know that $A$ contains a $3$-fan from $a$ to $\{x,y,z\}$. Clearly, these paths are also contained in $G_A$. However, there are at most two internally disjoint paths from $a$ to distinct vertices of $C$, a contradiction. We deduce that $u\in V(C)\cap V(D)$, so $A$ is not $2$-connected. Let $\{E,F\}$ be a $1$-separation of $A$. Without loss of generality, we may assume that $x\in V(E)-V(F)$. Let $a$ be a vertex in $V(A)-\{x,y,z\}$. Suppose $a\in V(F)-V(E)$. Then $\{y,z\}\subseteq V(F)-V(E)$, otherwise there cannot be a $3$-fan from $a$ to $\{x,y,z\}$. However, since $d_A(x)\geq 2$, we know $x$ has a neighbor $b$ such that $b\in (V(A)-\{x,y,z\})\cap (V(E)-V(F))$. Since there cannot be a $2$-fan from $b$ to $\{y,z\}$, we obtain a contradiction. Thus, we conclude that, for all $a\in V(A)-\{x,y,z\}$, the vertex $a$ is contained in $V(E)$. Therefore, $V(F)=\{y,z\}$. However, we know $\min\{d_A(y),d_A(z)\}\geq 2$, a contradiction.
\end{proof}

A $2$-connected graph $G$ with at least four vertices is {\it internally $3$-connected} if, for every $2$-separation $\{G_1, G_2\}$ of $G$, one of $G_1$ or $G_2$ is isomorphic to a $3$-vertex path $P_3$. In particular, every $3$-connected graph is internally $3$-connected.

Let $v$ be a vertex of a graph $G$ with $d_G(v) \geq 4$. Suppose $(A, B)$ is a partition of $N(v)$ such that $\min\{|A|, |B|\} \geq 2$. Let $a$ and $b$ be two new vertices. The operation of {\it splitting $v$ into $a$ and $b$ with respect to $(A, B)$} consists of
\begin{enumerate}
    \item[(\romannum{1})] replacing $v$ by two adjacent vertices $a$ and $b$,
    \item[(\romannum{2})] joining $a$ to every vertex in $A$, and
    \item[(\romannum{3})] joining $b$ to every vertex in $B$.
\end{enumerate}
Our definition of vertex splitting is consistent with that of Tutte~\cite{Tutte1961}. Under this definition, if $G$ is a $3$-connected graph and $G'$ is obtained from $G$ by splitting a vertex, then $G'$ is also $3$-connected.

In the next lemma, although the graph $G$ is simple, when we contract edges, we do not require the resulting graph to be simplified.

\begin{lemma}\label{lem:contracting forest}
    Suppose $G$ is a graph with minimum degree at least three and $F$ is a forest in $G$. If $G/E(F)$ is simple and $3$-connected, then $G$ is $3$-connected.
\end{lemma}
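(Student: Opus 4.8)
The plan is to argue by contraposition. Write $H := G/E(F)$ and let $\phi\colon V(G)\to V(H)$ be the quotient map, so that each fibre $\phi^{-1}(v)$ is a branch set, namely a tree component of $F$. Since contracting an edge leaves the number of connected components unchanged, $G$ is connected because $H$ is; and since $\phi$ is surjective, $|V(G)|\ge |V(H)|\ge 4$. Assuming $G$ is not $3$-connected, the preceding proposition then supplies a vertex cut $X$ of $G$ with $1\le |X|\le 2$; let $H_1,\dots,H_m$ (with $m\ge 2$) be the components of $G-X$. My goal is to manufacture from $X$ a vertex cut of $H$ of size at most two, contradicting the $3$-connectivity of $H$.

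The engine of the proof is two consequences of the hypothesis that $H$ is \emph{simple}. For every branch set $B$: (i) each edge of $G$ with both ends in $B$ lies in $F$ --- otherwise it would collapse to a loop in $H$ --- so $G[B]$ is precisely the tree $B$; and (ii) at most one edge of $G$ joins any two distinct branch sets, since two such edges would become parallel edges in $H$. I will also use the elementary remark that a branch set disjoint from $X$ is connected in $G-X$ and hence lies inside a single component $H_i$.

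Let $B_X$ be the union of the (at most two) branch sets that meet $X$, so that $\phi^{-1}(\phi(X))=B_X$ and $|\phi(X)|\le 2$. The vertex set of $H-\phi(X)$ is then $\phi\big(\bigcup_i(H_i\setminus B_X)\big)$, and because $G$ has no edge between distinct components $H_i,H_j$, the images $\phi(H_i\setminus B_X)$ and $\phi(H_j\setminus B_X)$ are non-adjacent in $H$ for $i\ne j$. Hence if two or more of the sets $H_i\setminus B_X$ were nonempty, then $\phi(X)$ would be a vertex cut of $H$ of size at most two, which is impossible. Therefore at most one component escapes $B_X$, and since $m\ge 2$ there is a component $H_i$ with $H_i\subseteq B_X$.

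The crux is to contradict the existence of such a swallowed component, and this is exactly where the difficulty lies: a branch set meeting $X$ may straddle several components of $G-X$ or absorb one entirely, so $\phi(X)$ need not be a cut of $H$ by itself. Here the simplicity hypothesis takes over. Every $h\in H_i$ has $N_G(h)\subseteq H_i\cup X\subseteq B_X$, so by (i) all edges at the vertices of $H_i$ run inside the tree(s) comprising $B_X$. Consider the forest induced on $H_i$ together with the vertices of $X$ lying in the same branch set: by (i) each vertex of $H_i$ retains degree $\deg_G(\cdot)\ge 3$ there, except that by (ii) at most one may lose a single edge to the other branch set and so still has degree at least two, while the only possible leaves are the vertices of $X$ in that branch set. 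When $X$ lies in a single branch set, some $h\in H_i$ is an interior vertex of degree at least three, forcing at least three leaves against at most two available; when the two vertices of $X$ lie in different branch sets, the analogous count inside one branch set yields a tree-component with a vertex of degree at least two, forcing two leaves against a budget of one. Either way we reach a contradiction, so $G$ is $3$-connected. The minimum-degree-three hypothesis and the simplicity of $H$ are both used precisely in this final leaf count.
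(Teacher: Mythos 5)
Your proof is correct, and it takes a genuinely different route from the paper's. The paper's own argument is a short induction on the edges of $F$: it orders $E(F)=\{f_1,\dots,f_n\}$, contracts one edge at a time, and observes that since $G/E(F)$ is simple and $G$ has minimum degree at least three, every intermediate graph is simple with minimum degree at least three; consequently each intermediate graph is obtained from the next one by a vertex splitting in Tutte's sense, and the lemma follows by walking backwards from the $3$-connected graph $G/E(F)$ using the fact, quoted in the paper's preliminaries, that vertex splitting preserves $3$-connectivity. Your argument instead works by contraposition directly on the quotient map $\phi$: a vertex cut $X$ of size at most two in $G$ either projects to a vertex cut $\phi(X)$ of size at most two in $H=G/E(F)$, or some component of $G-X$ is swallowed by the at most two branch sets meeting $X$; you then eliminate the swallowed-component case by a leaf count inside the branch-set trees, using simplicity of $H$ twice (edges of $G$ inside a branch set must be forest edges, and distinct branch sets are joined by at most one edge of $G$) together with the minimum-degree hypothesis to show the forest would need more leaves than $X$ can supply. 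What the paper's proof buys is brevity: it is a few lines riding on Tutte's vertex-splitting theorem, which the paper has already stated as a black box. What yours buys is self-containedness and transparency: it never invokes vertex splitting, it isolates exactly where simplicity and the degree-three condition are used, and it gives explicit structural information about how vertex cuts of $G$ relate to vertex cuts of $G/E(F)$. Both arguments are sound; yours is longer but more elementary.
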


\begin{proof}
    Let $E(F)=\{f_1,f_2,\dots,f_n\}$. We define a sequence of graphs as follows.
    \begin{enumerate}
        \item[(\romannum{1})] Let $G_0=G$.
        \item[(\romannum{2})] For $i\in\{1,2,\dots,n\}$, let $G_{i}=G_{i-1}/f_i$.
    \end{enumerate}
    Clearly, $G_n=G/E(F)$. Because $F$ is a forest, $f_i$ is not a loop in $G_{i-1}$ for all $i\in\{1,2,\dots,n\}$. Indeed, because $G_n$ is simple, we know $G_i$ is simple for all $i\in\{1,2,\dots,n\}$. Moreover, since the minimum degree of $G$ is at least three, the minimum degree of $G_i$ is at least three for all $i\in\{0,1,\dots,n\}$. Thus, $f_i$ is not incident to a vertex of degree less than three in $G_{i-1}$. Therefore, for $i\in\{0,1,\dots, n-1\}$, the graph $G_i$ can be obtained from $G_{i+1}$ by a vertex splitting. Because $G/E(F)$ is $3$-connected, we conclude that $G$ is also $3$-connected.
\end{proof}

\subsection{Minimally $3$-connected graphs}
The following lemmas for minimally $3$-connected graphs will be used frequently throughout the paper. Let $G$ be a graph and let $e$ be an edge that joins the vertices $x$ and $y$. The {\it order} of $e$ is the minimum of $d_G(x)$ and $d_G(y)$.

\begin{lemma}\cite[Chapter 1, Lemma 4.2]{Bollobas}\label{lem:bollobas}
    A $3$-connected graph is minimally $3$-connected if and only if there are exactly three internally disjoint $(x,y)$-paths for every pair of adjacent vertices $x$ and $y$.
\end{lemma}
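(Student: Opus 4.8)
The plan is to prove both implications by combining Menger's theorem with the elementary fact that deleting a single edge from a $3$-connected graph leaves a $2$-connected graph. Since $G$ is $3$-connected, Menger's theorem already guarantees \emph{at least} three internally disjoint $(x,y)$-paths for every pair $x,y$; hence the entire content of the lemma is the equivalence between minimality and the \emph{nonexistence} of a fourth such path when $x$ and $y$ are adjacent. Throughout, for an edge $e=xy$ I would exploit the trivial observation that $e$, viewed as a length-one $(x,y)$-path, has no internal vertices, so it is automatically internally disjoint from every $(x,y)$-path of $G\setminus e$; consequently the maximum number of internally disjoint $(x,y)$-paths in $G$ is exactly one more than the maximum number in $G\setminus e$.

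For the forward direction, suppose $G$ is minimally $3$-connected and let $e=xy$ be any edge. First I would record that $G\setminus e$ is $2$-connected: since vertex connectivity is at most edge connectivity, $G$ is $3$-edge-connected, so $G\setminus e$ is connected, and a cut vertex $v$ of $G\setminus e$ (necessarily distinct from $x,y$) would make $e$ a bridge of $G-v$, contradicting the $2$-edge-connectivity of $G-v$. By minimality $G\setminus e$ is not $3$-connected, so it has a vertex cut $S$ with $|S|=2$. Because $G$ is $3$-connected, $S$ is not a cut of $G$, so restoring $e$ must reconnect $(G\setminus e)-S$; this forces $x$ and $y$ to lie in different components of $(G\setminus e)-S$ and forces $x,y\notin S$. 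Thus $S$ is an $(x,y)$-separator of size $2$ in $G\setminus e$, and Menger's theorem gives at most two internally disjoint $(x,y)$-paths in $G\setminus e$, hence at most three in $G$. Combined with the lower bound of three, equality holds.

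For the converse, assume every adjacent pair admits exactly three internally disjoint paths, and fix an edge $e=xy$. If $G\setminus e$ had three internally disjoint $(x,y)$-paths, then together with $e$ we would obtain four internally disjoint $(x,y)$-paths in $G$, contradicting the hypothesis; so $G\setminus e$ has at most two. Since $x$ and $y$ are nonadjacent in $G\setminus e$, Menger's theorem yields an $(x,y)$-separator $S$ with $|S|\le 2$, and $S$ is a vertex cut of $G\setminus e$ of size less than three. As $|V(G\setminus e)|=|V(G)|>3$, the graph $G\setminus e$ is not $3$-connected. Since $e$ was arbitrary, $G$ is minimally $3$-connected.

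The routine bookkeeping is light, and I expect the only delicate point to be the reconnection step in the forward direction: one must verify that a $2$-cut $S$ of $G\setminus e$ genuinely separates the endpoints of $e$, rather than merely failing to separate them while some third vertex sits outside both sides. This is precisely where the paper's relaxed definition of separation could otherwise obscure the argument, so I would phrase everything purely in terms of the components of $(G\setminus e)-S$ together with the connectedness of $G-S$, keeping the whole proof at the level of Menger's theorem and the basic inequality $\kappa(G)\le\lambda(G)$.
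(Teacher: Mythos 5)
Your proof is correct. Note that the paper does not prove this lemma at all — it imports it verbatim from Bollob\'{a}s (Chapter 1, Lemma 4.2) — so there is no internal proof to compare against; your argument is the standard one, and its two delicate points (that a $2$-element vertex cut $S$ of $G\setminus xy$ must avoid $\{x,y\}$ and separate $x$ from $y$, since $G-S$ is connected and restoring the single edge $xy$ can only merge two components; and that the maximum number of internally disjoint $(x,y)$-paths in $G$ exceeds that in $G\setminus xy$ by exactly one) are both handled correctly.
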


\begin{lemma}\cite[Satz 5]{Halin-German}\label{lem:Halin_two_deg3_vtx}
    If $G$ is a minimally $3$-connected graph, then every cycle meets at least two vertices of degree three of $G$.
\end{lemma}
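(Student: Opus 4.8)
The plan is to argue by contradiction, combining a minimal counterexample with the $2$-separation that minimal $3$-connectedness forces on the deletion of a suitable cycle edge. Among all minimally $3$-connected graphs admitting a cycle meeting at most one degree-$3$ vertex, I would choose $G$ with $|V(G)|$ as small as possible, and then such a cycle $C$ with $|V(C)|$ minimum. Two easy reductions come first. A chord of $C$ would split it into two shorter cycles, one of which inherits at most one degree-$3$ vertex of $C$ (the chord endpoints have degree at least four unless one coincides with the unique degree-$3$ vertex), so I may assume $C$ is induced. Since at most two edges of $C$ are incident with a degree-$3$ vertex and $|V(C)|\ge 3$, there is an edge $e=uv$ of $C$ with $d_G(u),d_G(v)\ge 4$.

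The engine is the deletion of $e$. Because $G$ is $3$-connected and $d_G(u),d_G(v)\ge 4$, the graph $G\setminus e$ has minimum degree at least three and is $2$-connected; by minimal $3$-connectedness it is not $3$-connected, so it has a $2$-cut $\{a,b\}$. Since $G-\{a,b\}$ is connected while $(G\setminus e)-\{a,b\}$ is not, the edge $e$ bridges the two sides, so $u$ and $v$ lie in distinct components; writing $\{G_1,G_2\}$ for the resulting $2$-separation of $G\setminus e$ with $u\in V(G_1)\setminus\{a,b\}$ and $v\in V(G_2)\setminus\{a,b\}$, the only edge of $G$ joining the interiors is $e$. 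A short parity count shows that the arc $C-e$, being a $u$--$v$ path, crosses $\{a,b\}$ exactly once, so $C$ meets $G_1$ in a single sub-path $Q$ from $u$ to one cut vertex, say $a$, whose interior vertices have all their $G$-neighbors inside $G_1$ and therefore still have degree at least four.

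I would then cleave $G_1$ intact and collapse the other side. Taking $V(A)=V(G_1)$ and $V(B)=V(G_2)\cup\{u\}$ exhibits $\{A,B\}$ as a $3$-separation of $G$ with $V(A)\cap V(B)=\{a,b,u\}$, and $d_A(u)\ge 3$ because $u$ loses only the edge $e$. Lemma~\ref{lem: cleaving is 3-con} then applies once the mild degree conditions on $a$ and $b$ are checked, so capping $A$ with a new vertex $z$ adjacent to $a,b,u$ yields a strictly smaller $3$-connected graph $G_A$; the path $Q$ together with $a$--$z$--$u$ closes to a cycle $C_A$ in $G_A$ whose only clear degree-$3$ vertex is $z$. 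The interior vertices of $Q$ keep degree at least four, and $u$ gains the edge to $z$, so the only candidates for degree three on $C_A$ are $z$ and $a$. Choosing the separation with $|V(G_1)|$ minimum (an innermost fragment) is the tool I would use to control $d_{G_A}(a)$ and to rule out new low-degree vertices spoiling the cycle; the symmetric construction on the $v$-side explains geometrically why one expects \emph{two} degree-$3$ vertices, one drawn from each side of the cut.

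The main obstacle, and the step I expect to demand the most care, is bookkeeping the degrees along $C_A$ while restoring minimal $3$-connectedness under the reduction: Lemma~\ref{lem: cleaving is 3-con} returns a $3$-connected graph, not a minimally $3$-connected one, so before invoking the minimality of $(G,C)$ I must pass to a minimally $3$-connected subgraph of $G_A$ and verify that $C_A$, or a suitable descendant of it, survives with its degree-$3$ count unchanged. Lemma~\ref{lem:contracting forest} and the vertex-splitting description of $3$-connectedness are the natural instruments for this surgery, and the fragment-minimality choice is precisely what should prevent $z$ from being joined by other degree-$3$ vertices on $C_A$, so that the smaller graph still carries a cycle meeting at most one degree-$3$ vertex, contradicting the minimality of $|V(G)|$.
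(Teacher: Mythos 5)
The paper itself does not prove this lemma; it imports it verbatim from Halin (\cite[Satz 5]{Halin-German}), so your argument must stand entirely on its own, and as written it does not. The first problem is the ``parity count.'' It is false that the path $C-e$ crosses the $2$-cut $\{a,b\}$ exactly once: passing through a cut vertex does not force the path to switch sides, so $C-e$ can have the form $u\cdots a\cdots b\cdots v$ with the segment between $a$ and $b$ lying in the interior of $G_1$. In that case $C$ meets $G_1$ in a single path from $u$ to $b$ having $a$ as an \emph{interior} vertex, and your key assertion about $Q$ --- that its interior vertices have all their $G$-neighbours inside $G_1$ --- fails at $a$. This is repairable (one of the two sides always meets $C$ in a path ending at a single cut vertex, possibly together with the other cut vertex as an isolated vertex of the intersection), but even after the repair, ``all neighbours inside $G_1$'' only yields that degrees are \emph{preserved}, not that they are at least four: the one degree-$3$ vertex that $C$ is permitted to have may lie in the interior of $Q$, and you cannot in general choose the nice side to avoid it.

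The second problem is fatal to the induction. Lemma~\ref{lem: cleaving is 3-con} gives only that $G_A$ is $3$-connected, while a minimal counterexample must be \emph{minimally} $3$-connected, and your proposed repair --- delete edges of $G_A$ until it becomes minimally $3$-connected --- destroys exactly the property you need: those deletions can drop degrees of vertices on $C_A$ from four to three, so the cycle you pass to the smaller graph may acquire many degree-$3$ vertices and cease to be a counterexample. Independently of this, $C_A$ can already carry two degree-$3$ vertices before any deletion: $z$ by construction, and $a$, whose degree in $G_A$ equals $d_{G_1}(a)+1$ and can perfectly well be three; choosing an innermost fragment does not force $d_{G_1}(a)\ge 3$. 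The standard way to make a minimal-counterexample argument close here --- and essentially the route behind Halin's cited proof --- is to shrink by \emph{contraction} rather than by deletion-plus-cleaving: pick an edge $e=uv$ of $C$ with $d_G(u),d_G(v)\ge 4$ and invoke Lemma~\ref{lem:Halin_contract_ord4}, which guarantees that $G/e$ is again minimally $3$-connected; if $e$ lies in no triangle, the identified vertex has degree at least $d_G(u)+d_G(v)-2\ge 6$, so when $|V(C)|\ge 4$ the cycle $C/e$ is shorter, still meets at most one degree-$3$ vertex, and contradicts minimality, leaving only the triangle case to a separate direct argument. Contraction preserves minimal $3$-connectivity for free, which is precisely what your cleaving reduction cannot deliver.
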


\begin{lemma}\cite[Satz 4]{Halin-German}\label{lem:Halin_contract_ord4}
    If $G$ is a minimally $3$-connected graph and $e$ is an edge of order at least four, then $G/e$ is minimally $3$-connected.
\end{lemma}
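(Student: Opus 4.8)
The plan is to verify separately that $G/e$ is $3$-connected and that it is minimally so. Write $e=xy$, let $v$ be the vertex of $G/e$ produced by the contraction, and recall that $\min\{d_G(x),d_G(y)\}\ge 4$ by hypothesis. A useful preliminary observation is that $x$ and $y$ have no common neighbour: if some $w$ were adjacent to both, then $\{x,y,w\}$ would induce a triangle, and since $d_G(x),d_G(y)\ge 4$ this cycle would meet at most one vertex of degree three, contradicting Lemma~\ref{lem:Halin_two_deg3_vtx}. Consequently the contraction creates no parallel edges, so $G/e$ is simple; this is what will let me apply Lemma~\ref{lem:bollobas} to $G/e$ at the end.

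For $3$-connectivity I would argue by contradiction using the equivalence between non-$3$-connectivity and the existence of a $2$-separation (the Proposition preceding Lemma~\ref{lem: cleaving is 3-con}). Suppose $\{C,D\}$ is a $2$-separation of $G/e$. If $v\notin V(C)\cap V(D)$, then expanding $v$ back to the edge $e$ turns $\{C,D\}$ into a $2$-separation of $G$ with $x$ and $y$ on the same side, contradicting that $G$ is $3$-connected. Hence $v\in V(C)\cap V(D)$, and writing $V(C)\cap V(D)=\{v,z\}$ shows that $\{x,y,z\}$ is a $3$-cut of $G$. Because $G$ is $3$-connected, each of $x,y,z$ has a neighbour in every component of $G-\{x,y,z\}$ (otherwise two of them would form a $2$-cut). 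If there were at least three such components, then $e$ together with one $x$--$y$ path through each of three of them would give four internally disjoint $x$--$y$ paths, contradicting Lemma~\ref{lem:bollobas}; so there are exactly two components $C_1,C_2$. This two-component case is the substantive one: here I would use that $d_G(x),d_G(y)\ge 4$ forces incidences of $x$ and $y$ beyond the edge $e$ and the one edge into each of $C_1,C_2$, and combine the $3$-cut $\{x,y,z\}$ with the crossing $2$-separation of $G\setminus e$ furnished by Lemma~\ref{lem:bollobas} (three internally disjoint $x$--$y$ paths means $G\setminus e$ has a $2$-cut separating $x$ from $y$) to route a fourth internally disjoint $x$--$y$ path, again contradicting Lemma~\ref{lem:bollobas}. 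I expect this routing step to be the main obstacle.

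Finally, to see that $G/e$ is minimally $3$-connected, I would apply Lemma~\ref{lem:bollobas} to $G/e$ and show that every pair of adjacent vertices is joined by exactly three internally disjoint paths; since $G/e$ is $3$-connected at least three always exist, so only the upper bound needs proof. For an edge $pq$ of $G/e$ with $p,q\neq v$, any four internally disjoint $p$--$q$ paths in $G/e$ would lift to four in $G$: at most one of them passes through $v$, and splitting $v$ back into $x,y$ turns that one into a $p$--$q$ path through the edge $e$ while leaving the others (which avoid $v$, hence avoid both $x$ and $y$) untouched, contradicting the minimality of $G$. The remaining edges $vq$, where $q$ is adjacent in $G$ to exactly one of $x,y$ (as they share no neighbour), form the crux of this part: a hypothetical fourth internally disjoint $v$--$q$ path in $G/e$ lifts to a family of internally disjoint paths from $\{x,y\}$ to $q$, and here the hypothesis $d_G(x),d_G(y)\ge 4$ is exactly what is needed to recombine these with the edge $e$ into four internally disjoint paths between $q$ and one endpoint of $e$, contradicting Lemma~\ref{lem:bollobas} for $G$. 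Assembling these three parts yields that $G/e$ is minimally $3$-connected.
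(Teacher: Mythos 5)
The paper does not actually prove this lemma: it is quoted as Halin's Satz~4 from \cite{Halin-German}, and the only argument supplied in the text is the remark after the statement that $G/e$ is simple, because an edge of order at least four cannot lie in a triangle by Lemma~\ref{lem:Halin_two_deg3_vtx}. Your preliminary observation reproduces exactly that remark, and your reductions are sound as far as they go: a $2$-separation of $G/e$ must have the contracted vertex $v$ in its hinge, so some $\{x,y,z\}$ is a $3$-cut of $G$, and $G-\{x,y,z\}$ has exactly two components $C_1,C_2$, since three or more would yield four internally disjoint $x$--$y$ paths against Lemma~\ref{lem:bollobas}. Likewise your lifting argument for edges of $G/e$ not incident to $v$ is correct.

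However, both load-bearing steps of your proposal are left as acknowledged claims rather than proofs, and in both cases the ``find four disjoint paths'' formulation is the wrong tool. For the two-component case you say you would ``route a fourth internally disjoint $x$--$y$ path'' and that you expect this to be the main obstacle; no routing is given, and this case is the entire content of the $3$-connectivity assertion. It can be closed, but via the separator rather than via a fourth path: by Lemma~\ref{lem:bollobas}, $G\setminus e$ has a $2$-separator $\{u,w\}$ of $x$ and $y$; the $x$--$y$ paths with interiors in $C_1$ and in $C_2$ force, up to labels, $u\in V(C_1)$ and $w\in V(C_2)$, and $z$ must lie in the side $A$ containing $x$ or the side $B$ containing $y$ (any further component of $G\setminus e-\{u,w\}$ would make $\{u,w\}$ a $2$-cut of $G$); say $z\in B$. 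Then every vertex of $A\cap V(C_1)$ has all its neighbours in $(A\cap V(C_1))\cup\{x,u\}$, so $3$-connectivity of $G$ forces $A\cap V(C_1)=\emptyset$, and symmetrically $A\cap V(C_2)=\emptyset$; hence $A=\{x\}$ and $d_G(x)\le|\{u,w,y\}|=3$, contradicting the order hypothesis. Second, for edges $vq$ of $G/e$ incident to the contracted vertex, your ``recombination'' is unproved and, as described, fails: the only recombination available is to prepend the edge $xy$ to one lifted path, which produces four internally disjoint $x$--$q$ paths only when at least three of the four lifted paths already start at the endpoint of $e$ adjacent to $q$; if the lifted paths split $2$--$2$ or $1$--$3$ between $x$ and $y$, you get at most three paths and the argument stalls. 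Here the dual (deletion) form closes the gap cleanly: with $q$ adjacent to $x$, minimality gives a $2$-cut $\{s,t\}$ of $G\setminus xq$; since $xy$ survives in $G\setminus xq$, the vertices $x,y$ lie in a common side or in the cut, and the hypothesis $d_G(x),d_G(y)\ge 4$ rules out a side equal to $\{x\}$ or $\{y\}$, so after contracting $e$ the image of $\{s,t\}$ is still a cut of at most two vertices in $(G/e)\setminus vq$. So your overall plan is viable, but as submitted the two central steps are gaps, not proofs.
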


We note that in~\cite{Halin-German}, Halin considered only simple graphs. However, a minimally $3$-connected graph $G$ is clearly simple. Moreover, by Lemma~\ref{lem:Halin_two_deg3_vtx}, no triangle in $G$ contains an edge of order at least four. Therefore, if $e$ is an edge of order at least four in $G$, then the contraction $G/e$ is always simple.

\begin{lemma}\cite[Korollar 1]{Mader}\label{lem:Mader_induce_forest}
    The subgraph induced by vertices of degree greater than $k$ in a minimally $k$-connected graph is a forest.
\end{lemma}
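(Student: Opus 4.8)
The plan is to prove the equivalent statement that every cycle of a minimally $k$-connected graph $G$ contains a vertex of degree $k$. Since every vertex of the subgraph $H$ induced by the vertices of degree greater than $k$ has degree exceeding $k$, this statement immediately shows that $H$ can contain no cycle, and hence that $H$ is a forest. Accordingly, I would suppose for contradiction that $G$ has a cycle $C$ every vertex of which has degree at least $k+1$, and work toward exhibiting a degree-$k$ vertex on $C$.

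The first step attaches a separating structure to each edge of $C$. Fix an edge $e=xy$ of $C$. Since $G$ is minimally $k$-connected, $G\setminus e$ is not $k$-connected, so by the equivalence recorded in Section~\ref{sec:pre} it has a vertex cut $T$ with $|T|=k-1$. As $G-T$ is connected (because $|T|=k-1$ and $G$ is $k$-connected) while $(G\setminus e)-T$ is not, the edge $e$ must be a bridge of $G-T$; deleting it splits $G-T$ into a part $P\ni x$ and a part $Q\ni y$, and $xy$ is the only edge of $G$ joining $P$ to $Q$. I will call $P$ the $x$-fragment of $e$. The key elementary observation is that $N(x)\subseteq T\cup(P\setminus\{x\})\cup\{y\}$, so that if $P=\{x\}$ then $d_G(x)\le|T|+1=k$, and hence $d_G(x)=k$ by $k$-connectivity.

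The second step is a minimality argument. Over all edges of $C$ and both choices of endpoint, I would choose a fragment $F$ (say the $x$-fragment of $e=xy$, with cut $T$) of minimum cardinality. If $|F|=1$, the observation above forces $d_G(x)=k$, contradicting $x\in V(C)$, and we are done. So the entire difficulty is to rule out $|F|\ge 2$. In that case $S:=T\cup\{x\}$ is a minimum vertex cut of $G$ (of size $k$) separating the nonempty set $F\setminus\{x\}$ from $Q$, whence, by the standard fact that every vertex of a minimum cut has a neighbour in each component it separates, every vertex of $T$ sends an edge into $F\setminus\{x\}$ and into $Q$; moreover $d_G(x)\ge k+1$ while $x$'s only neighbour outside $F\cup T$ is $y$, so $x$ has a neighbour in $F\setminus\{x\}$, and in particular the cycle-neighbour of $x$ on $C$ distinct from $y$ lies in $F\cup T$.

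The main obstacle is exactly this last step: showing that a minimum fragment must be a single vertex. The difficulty is that the $(k-1)$-cuts attached to different edges of $C$ can interact in complicated ways, so one cannot simply argue edge-by-edge in isolation. I expect to resolve this through the submodularity of the vertex-boundary function: writing $N(A)$ for the set of vertices outside $A$ adjacent to $A$, one has the uncrossing inequality $|N(A\cap B)|+|N(A\cup B)|\le|N(A)|+|N(B)|$, and from two suitably crossing fragments this produces a new, strictly smaller fragment, contradicting the minimality of $F$ unless $F$ is a single vertex. Carrying this out carefully—tracking where the unique crossing edges of the various fragments sit and verifying that the smaller set produced is again a genuine fragment of some edge of $C$—is the technical heart of the argument, and the degree hypothesis on $C$ is what guarantees there is always enough room to uncross until a degree-$k$ vertex is exposed.
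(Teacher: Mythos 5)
The paper never proves this lemma: it is imported verbatim from Mader (Korollar~1 of the cited paper), so there is no in-paper argument to compare against, and your proposal must stand on its own as a proof of Mader's theorem. It does not. Your reduction is correct: the forest statement is equivalent to the assertion that every cycle of a minimally $k$-connected graph contains a vertex of degree exactly $k$. Your setup is also sound: for an edge $e=xy$ of such a cycle $C$, a $(k-1)$-cut $T$ of $G\setminus e$ makes $e$ a bridge of $G-T$; the $x$-side $P$ satisfies $N(x)\subseteq T\cup(P\setminus\{x\})\cup\{y\}$; a size-one fragment exposes a degree-$k$ vertex; and if the minimum fragment $F$ has $|F|\ge 2$, then $T\cup\{x\}$ is a minimum cut of $G$ with the properties you list. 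But this is the routine part of the argument. The entire content of Mader's theorem is the step you explicitly leave open: showing that a minimum fragment over the edges of $C$ must have size one. You describe it yourself as ``the technical heart of the argument'' and only ``expect'' to resolve it by submodularity; that is a plan, not a proof, so the proposal has a genuine gap exactly where the difficulty lies.

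The gap is not a routine verification, and the submodularity gesture as stated does not close it, for two concrete reasons. First, your minimization is over the restricted family of fragments attached, via the bridge construction, to edges of $C$. The uncrossing inequality $|N(A\cap B)|+|N(A\cup B)|\le |N(A)|+|N(B)|$ can produce a smaller fragment with respect to \emph{some} minimum cut of $G$, but it gives no reason that this smaller set is again the $x'$-side of $G\setminus e'-T'$ for some edge $e'$ of $C$; without that, the minimality of $F$ is simply not contradicted. Second, before one can uncross at all, one must exhibit a second fragment that genuinely crosses $F$ (all the relevant corner sets nonempty), and this is where the hypothesis that every vertex of $C$ has degree at least $k+1$ has to do real work: one locates the other cycle-neighbour $z$ of $x$ in $F\cup T$, takes a cut for the edge $xz$ of $C$, and then runs a case analysis on whether $z\in F$ or $z\in T$ and on how the new cut meets $F$, $T$, and $Q$, with counting arguments that exploit the degree bound to rule out degenerate configurations. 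That analysis \emph{is} Mader's proof (roughly two pages in the original, and in Bollob\'as's \emph{Extremal Graph Theory}); none of it appears in your proposal. If the lemma is to be used as a black box, cite it as the paper does; if it is to be proved, the uncrossing analysis must actually be carried out, not announced.
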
 

\subsection{Super-minimally $3$-connected graphs}\label{subsec:relations}
Evidently, a super-minimally $k$-connected graph is both minimally $k$-connected and critically $k$-connected. However, as we will show, the converse does not hold. The {\it length} of a path $P$ is the number of edges in $P$. Let $k$ be an integer exceeding one, a {\it theta graph} $\Theta(l_1, l_2, \dots, l_k)$ consists of two distinct vertices connected by $k$ internally disjoint paths of lengths $l_1, l_2, \dots, l_k$ such that at most one of $l_1, l_2, \dots, l_k$ equals one. A {\it $k$-dimensional wheel} $W(l_1, l_2, \dots, l_k)$ is obtained from $\Theta(l_1, l_2, \dots, l_k)$ by adding a vertex adjacent to all degree-$2$ vertices in $\Theta(l_1, l_2, \dots, l_k)$. An {\it augmented $k$-dimensional wheel} $W^+(l_1, l_2, \dots, l_k)$ is formed by adding a vertex adjacent to all vertices in $\Theta(l_1, l_2, \dots, l_k)$. It is straightforward to verify that the $4$-dimensional wheel $W(3,3,3,3)$ (see Figure~\ref{fig_multi_wheel}(a)) is both minimally $3$-connected and critically $3$-connected. However, it contains a $3$-connected proper subgraph isomorphic to the $3$-dimensional wheel $W(3,3,3)$ (see Figure~\ref{fig_multi_wheel}(b)), and thus it fails to be super-minimally $3$-connected. 
\begin{center}
    \begin{figure}[htb]
    \hbox to \hsize{
	\hfil
	\resizebox{8cm}{!}{\input{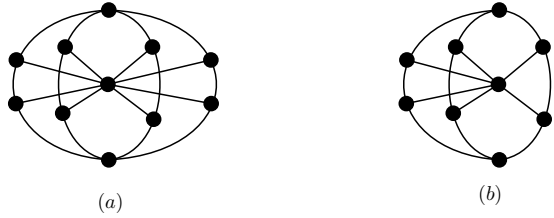}}%
	\hfil
    }
    \caption{(a). A $4$-dimensional wheel $W(3,3,3,3)$, and (b). a $3$-connected proper subgraph.}\label{fig_multi_wheel}
\end{figure}
\end{center}

On the other hand, a super-minimally $3$-connected graph is not necessarily uniformly $3$-connected. From a cycle with vertices $v_1, v_2, \dots, v_{2n}$ labeled in clockwise order, an {\it alternating double wheel} $A_n$ is obtained by adding two vertices $x$ and $y$ such that $x$ is adjacent to $v_1, v_3, \dots, v_{2n-1}$, and $y$ is adjacent to $v_2, v_4, \dots, v_{2n}$. Figure~\ref{fig_TW_4} shows the alternating double wheel $A_4$. It is straightforward to verify that, for all $n \geq 4$, the graph $A_n$ is super-minimally $3$-connected. It is not uniformly $3$-connected, since there are $n$ internally disjoint paths between $x$ and $y$.

\begin{center}
    \begin{figure}[htb]
    \hbox to \hsize{
	\hfil
	\resizebox{4cm}{!}{\tikzset{every picture/.style={line width=0.75pt}} %set default line width to 0.75pt        

\begin{tikzpicture}[x=0.75pt,y=0.75pt,yscale=-1,xscale=1]
%uncomment if require: \path (0,300); %set diagram left start at 0, and has height of 300

%Shape: Circle [id:dp6176361956843343] 
\draw   (176,140) .. controls (176,103.55) and (205.55,74) .. (242,74) .. controls (278.45,74) and (308,103.55) .. (308,140) .. controls (308,176.45) and (278.45,206) .. (242,206) .. controls (205.55,206) and (176,176.45) .. (176,140) -- cycle ;
%Shape: Ellipse [id:dp42319482200998815] 
\draw  [fill={rgb, 255:red, 0; green, 0; blue, 0 }  ,fill opacity=1 ] (237.32,206) .. controls (237.32,203.42) and (239.42,201.32) .. (242,201.32) .. controls (244.58,201.32) and (246.68,203.42) .. (246.68,206) .. controls (246.68,208.58) and (244.58,210.68) .. (242,210.68) .. controls (239.42,210.68) and (237.32,208.58) .. (237.32,206) -- cycle ;
%Shape: Ellipse [id:dp8061909821686092] 
\draw  [fill={rgb, 255:red, 0; green, 0; blue, 0 }  ,fill opacity=1 ] (237.32,74) .. controls (237.32,71.42) and (239.42,69.32) .. (242,69.32) .. controls (244.58,69.32) and (246.68,71.42) .. (246.68,74) .. controls (246.68,76.58) and (244.58,78.68) .. (242,78.68) .. controls (239.42,78.68) and (237.32,76.58) .. (237.32,74) -- cycle ;
%Shape: Ellipse [id:dp3117280270183791] 
\draw  [fill={rgb, 255:red, 0; green, 0; blue, 0 }  ,fill opacity=1 ] (303.32,140) .. controls (303.32,137.42) and (305.42,135.32) .. (308,135.32) .. controls (310.58,135.32) and (312.68,137.42) .. (312.68,140) .. controls (312.68,142.58) and (310.58,144.68) .. (308,144.68) .. controls (305.42,144.68) and (303.32,142.58) .. (303.32,140) -- cycle ;
%Shape: Ellipse [id:dp30920421646095675] 
\draw  [fill={rgb, 255:red, 0; green, 0; blue, 0 }  ,fill opacity=1 ] (171.32,140) .. controls (171.32,137.42) and (173.42,135.32) .. (176,135.32) .. controls (178.58,135.32) and (180.68,137.42) .. (180.68,140) .. controls (180.68,142.58) and (178.58,144.68) .. (176,144.68) .. controls (173.42,144.68) and (171.32,142.58) .. (171.32,140) -- cycle ;
%Shape: Ellipse [id:dp8108787776493573] 
\draw  [fill={rgb, 255:red, 0; green, 0; blue, 0 }  ,fill opacity=1 ] (191.82,188.2) .. controls (191.82,185.62) and (193.92,183.52) .. (196.5,183.52) .. controls (199.08,183.52) and (201.18,185.62) .. (201.18,188.2) .. controls (201.18,190.78) and (199.08,192.88) .. (196.5,192.88) .. controls (193.92,192.88) and (191.82,190.78) .. (191.82,188.2) -- cycle ;
%Shape: Ellipse [id:dp7315645463657595] 
\draw  [fill={rgb, 255:red, 0; green, 0; blue, 0 }  ,fill opacity=1 ] (191.82,91.2) .. controls (191.82,88.62) and (193.92,86.52) .. (196.5,86.52) .. controls (199.08,86.52) and (201.18,88.62) .. (201.18,91.2) .. controls (201.18,93.78) and (199.08,95.88) .. (196.5,95.88) .. controls (193.92,95.88) and (191.82,93.78) .. (191.82,91.2) -- cycle ;

%Shape: Ellipse [id:dp6652099327295747] 
\draw  [fill={rgb, 255:red, 0; green, 0; blue, 0 }  ,fill opacity=1 ] (281.82,189.2) .. controls (281.82,186.62) and (283.92,184.52) .. (286.5,184.52) .. controls (289.08,184.52) and (291.18,186.62) .. (291.18,189.2) .. controls (291.18,191.78) and (289.08,193.88) .. (286.5,193.88) .. controls (283.92,193.88) and (281.82,191.78) .. (281.82,189.2) -- cycle ;
%Shape: Ellipse [id:dp8379483136426209] 
\draw  [fill={rgb, 255:red, 0; green, 0; blue, 0 }  ,fill opacity=1 ] (281.82,92.2) .. controls (281.82,89.62) and (283.92,87.52) .. (286.5,87.52) .. controls (289.08,87.52) and (291.18,89.62) .. (291.18,92.2) .. controls (291.18,94.78) and (289.08,96.88) .. (286.5,96.88) .. controls (283.92,96.88) and (281.82,94.78) .. (281.82,92.2) -- cycle ;

%Shape: Ellipse [id:dp30179030883929936] 
\draw  [fill={rgb, 255:red, 0; green, 0; blue, 0 }  ,fill opacity=1 ] (237.32,140) .. controls (237.32,137.42) and (239.42,135.32) .. (242,135.32) .. controls (244.58,135.32) and (246.68,137.42) .. (246.68,140) .. controls (246.68,142.58) and (244.58,144.68) .. (242,144.68) .. controls (239.42,144.68) and (237.32,142.58) .. (237.32,140) -- cycle ;
%Shape: Ellipse [id:dp83491698309921] 
\draw  [fill={rgb, 255:red, 0; green, 0; blue, 0 }  ,fill opacity=1 ] (377.32,140) .. controls (377.32,137.42) and (379.42,135.32) .. (382,135.32) .. controls (384.59,135.32) and (386.68,137.42) .. (386.68,140) .. controls (386.68,142.58) and (384.59,144.68) .. (382,144.68) .. controls (379.42,144.68) and (377.32,142.58) .. (377.32,140) -- cycle ;
%Straight Lines [id:da02213096725967678] 
\draw    (242,74) -- (242,140) ;
%Straight Lines [id:da06312122198215775] 
\draw    (242,140) -- (242,206) ;
%Straight Lines [id:da24138950509722745] 
\draw    (176,140) -- (242,140) ;
%Straight Lines [id:da4138538852869822] 
\draw    (242,140) -- (308,140) ;
%Straight Lines [id:da7832930898418836] 
\draw    (286.5,92.2) -- (382,140) ;
%Straight Lines [id:da9931368992932137] 
\draw    (286.5,189.2) -- (382,140) ;
%Curve Lines [id:da7507036054545794] 
\draw    (196.5,91.2) .. controls (193.07,22.27) and (381.07,20.27) .. (382,140) ;
%Curve Lines [id:da4817318395123823] 
\draw    (196.5,188.2) .. controls (191.73,260.27) and (377.73,265.6) .. (382,140) ;

% Text Node
\draw (237,58.4) node [anchor=north west][inner sep=0.75pt]    {$v_{1}$};
% Text Node
\draw (292,81.4) node [anchor=north west][inner sep=0.75pt]    {$v_{2}$};
% Text Node
\draw (315,138.4) node [anchor=north west][inner sep=0.75pt]    {$v_{3}$};
% Text Node
\draw (288.5,202.6) node [anchor=north west][inner sep=0.75pt]    {$v_{4}$};
% Text Node
\draw (237,219.4) node [anchor=north west][inner sep=0.75pt]    {$v_{5}$};
% Text Node
\draw (181,198.4) node [anchor=north west][inner sep=0.75pt]    {$v_{6}$};
% Text Node
\draw (153,140.4) node [anchor=north west][inner sep=0.75pt]    {$v_{7}$};
% Text Node
\draw (176,78.4) node [anchor=north west][inner sep=0.75pt]    {$v_{8}$};
% Text Node
\draw (244,148.4) node [anchor=north west][inner sep=0.75pt]    {$x$};
% Text Node
\draw (391,135.73) node [anchor=north west][inner sep=0.75pt]    {$y$};

\end{tikzpicture}}%
	\hfil
    }
    \caption{An alternating double wheel $A_4$.}\label{fig_TW_4}
\end{figure}
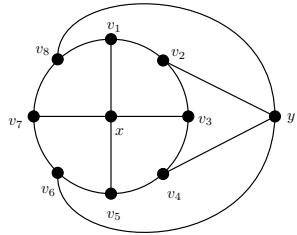
\end{center}

For all $k \geq 4$, there are also super-minimally $k$-connected graphs that are not uniformly $k$-connected. By $K_n - C_n$, we mean the graph obtained from the complete graph $K_n$ by deleting the edges of a Hamiltonian cycle $C_n$. Similarly, by $K_n-P_n$, we mean the graph obtained from $K_n$ by deleting the edges of a Hamiltonian path $P_n$.

\begin{proposition}\label{prop:K_n-C_n is (n-3)-con}
    For all $n\geq 5$, the graph $K_n-C_n$ is $(n-3)$-connected.
\end{proposition}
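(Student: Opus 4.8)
The plan is to argue directly about vertex cuts, exploiting that $K_n - C_n$ is the complement of the Hamiltonian cycle $C_n$. The first step I would record is the basic structural dictionary: two distinct vertices $u,v$ fail to be adjacent in $K_n - C_n$ precisely when $uv$ is an edge of $C_n$. In particular, each vertex has exactly two non-neighbors in $K_n - C_n$, namely its two neighbors along $C_n$, so the graph is $(n-3)$-regular on $n > n-3$ vertices. Thus it suffices to prove that $K_n - C_n$ has no vertex cut of size at most $n-4$; equivalently, that $(K_n - C_n) - S$ is connected for every $S \subseteq V(K_n - C_n)$ with $|S| \le n-4$ (taking $S = \varnothing$ also yields connectivity of the graph itself).

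Next I would assume for contradiction that some such $S$, with $|S| \le n-4$, makes $(K_n - C_n) - S$ disconnected. Then $V(K_n - C_n)\setminus S$ partitions into two nonempty sets $A$ and $B$ with no edge of $K_n - C_n$ joining $A$ to $B$. By the dictionary above, this means that for every $a \in A$ and every $b \in B$ the pair $ab$ is an edge of $C_n$. Fixing any single vertex $a \in A$, every vertex of $B$ is then a $C_n$-neighbor of $a$, and since $a$ has only two neighbors on $C_n$ we get $|B| \le 2$; symmetrically $|A| \le 2$.

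The counting now closes the argument. On one hand $|A| + |B| = n - |S| \ge n - (n-4) = 4$, while on the other hand $|A|, |B| \le 2$ force $|A| + |B| \le 4$. Hence $|A| = |B| = 2$ and $|S| = n-4$. Writing $A = \{a_1,a_2\}$ and $B = \{b_1,b_2\}$, each $a_i$ has both of its $C_n$-neighbors inside $B$ and each $b_j$ has both of its $C_n$-neighbors inside $A$, so the four edges $a_1b_1, a_1b_2, a_2b_1, a_2b_2$ account for all incidences of these vertices in $C_n$ and form the $4$-cycle $a_1 b_1 a_2 b_2 a_1$. Since no vertex of this $4$-cycle has a $C_n$-edge leaving $A \cup B$, it is a connected component of $C_n$; but $C_n$ is connected, so this forces $n = 4$, contradicting $n \ge 5$.

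The argument is structural rather than computational, so no routine calculation is involved. The only delicate point, and the step I would take the most care with, is the tight equality case $|A| = |B| = 2$: this is where the hypothesis $n \ge 5$ is used essentially, to prevent the $4$-cycle from arising as a component of the Hamiltonian cycle $C_n$. It is worth noting that the bound genuinely fails at $n = 4$, since $K_4 - C_4$ is a perfect matching on four vertices and hence disconnected; this confirms that $n \ge 5$ cannot be relaxed.
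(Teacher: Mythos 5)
Your proof is correct, but it takes a genuinely different route from the paper. The paper argues by induction on $n$: the base case is the observation that $K_5-C_5\cong C_5$, and the inductive step uses the identity $(K_n-C_n)-v = K_{n-1}-P_{n-1}$, which contains $K_{n-1}-C_{n-1}$ as a spanning subgraph; since every vertex-deleted subgraph is $(n-4)$-connected by the inductive hypothesis, the graph itself is $(n-3)$-connected. You instead argue directly about a hypothetical vertex cut $S$ with $|S|\le n-4$: complementation turns the ``no edges between $A$ and $B$'' condition into ``all pairs $ab$ are edges of $C_n$,'' the degree-two constraint on $C_n$ forces $|A|,|B|\le 2$, and the tight case $|A|=|B|=2$ would make a $4$-cycle a connected component of the Hamiltonian cycle $C_n$, which is impossible for $n\ge 5$. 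Your argument is self-contained and more informative: it exhibits the exact structure any small separation would have to take, pinpoints where $n\ge 5$ is used, and correctly notes that the statement fails at $n=4$ (where $K_4-C_4$ is a perfect matching). The paper's induction is shorter on the page but leans on the standard fact that a graph all of whose vertex-deleted subgraphs are $k$-connected is $(k+1)$-connected, and on verifying the base case by hand. Both proofs are sound.
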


\begin{proof}
    We argue by induction on $n$. When $n=5$, it is straightforward to verify that $K_5-C_5$ is isomorphic to $C_5$ and hence is $2$-connected, so the base case holds. Now we may assume that the statement holds for all $n'$ such that $5\leq n'<n$. For all $v\in V(K_n-C_n)$, we first observe that
    \[(K_n-C_n)-v= K_{n-1}-P_{n-1},\]
    and hence $(K_n-C_n)-v$ contains $K_{n-1}-C_{n-1}$ as a spanning subgraph. By the inductive hypothesis, $(K_n-C_n)-v$ is $(n-4)$-connected for all $v\in V(K_n-C_n)$ and thus $K_n-C_n$ is $(n-3)$-connected.  
\end{proof}

Now we define $Q_n$ to be the graph that is obtained from $K_n - C_n$ by adding two nonadjacent vertices $x$ and $y$, where each of $x$ and $y$ is adjacent to all the vertices in $K_n - C_n$. Figure~\ref{fig_Q_5} shows the graph $Q_5$. For $k\geq 4$, using Proposition~\ref{prop:K_n-C_n is (n-3)-con}, one can easily verify that $Q_{k+1}$ is $k$-connected. Moreover, $Q_{k+1}$ is super-minimally $k$-connected. To see this, suppose that $H$ is a proper $k$-connected subgraph of $Q_{k+1}$. Observe that every edge $e$ in $Q_{k+1}$ is adjacent to a vertex $v$ of degree $k$. If $e\notin E(H)$, then $v\notin V(H)$. Indeed, $Q_{k+1}$ has a path that contains all of the vertices of degree $k$. The deletion of $v$ will force the deletion of all of the vertices of degree $k$. Once all degree-$k$ vertices are deleted, the remaining graph consists of two isolated vertices. Thus, $Q_{k+1}$ does not have a proper $k$-connected subgraph and hence is super-minimally $k$-connected. However, $Q_{k+1}$ is not uniformly $k$-connected as there are $k+1$ internally disjoint paths between $x$ and $y$. Therefore, for all $k \geq 4$, the graph $Q_{k+1}$ provides an example of a super-minimally $k$-connected graph that is not uniformly $k$-connected.

\begin{center}
    \begin{figure}[htb]
    \hbox to \hsize{
	\hfil
	\resizebox{5cm}{!}{\tikzset{every picture/.style={line width=0.75pt}} %set default line width to 0.75pt        

\begin{tikzpicture}[x=0.75pt,y=0.75pt,yscale=-1,xscale=1]
%uncomment if require: \path (0,300); %set diagram left start at 0, and has height of 300

%Shape: Ellipse [id:dp2412139366382524] 
\draw  [fill={rgb, 255:red, 0; green, 0; blue, 0 }  ,fill opacity=1 ] (273.32,112.5) .. controls (273.32,109.92) and (275.42,107.82) .. (278,107.82) .. controls (280.58,107.82) and (282.68,109.92) .. (282.68,112.5) .. controls (282.68,115.08) and (280.58,117.18) .. (278,117.18) .. controls (275.42,117.18) and (273.32,115.08) .. (273.32,112.5) -- cycle ;
%Shape: Ellipse [id:dp24735018509879758] 
\draw  [fill={rgb, 255:red, 0; green, 0; blue, 0 }  ,fill opacity=1 ] (228.75,51.16) .. controls (228.75,48.57) and (230.85,46.48) .. (233.43,46.48) .. controls (236.02,46.48) and (238.11,48.57) .. (238.11,51.16) .. controls (238.11,53.74) and (236.02,55.84) .. (233.43,55.84) .. controls (230.85,55.84) and (228.75,53.74) .. (228.75,51.16) -- cycle ;
%Shape: Ellipse [id:dp3353600869347284] 
\draw  [fill={rgb, 255:red, 0; green, 0; blue, 0 }  ,fill opacity=1 ] (156.64,74.59) .. controls (156.64,72) and (158.73,69.91) .. (161.32,69.91) .. controls (163.9,69.91) and (166,72) .. (166,74.59) .. controls (166,77.17) and (163.9,79.27) .. (161.32,79.27) .. controls (158.73,79.27) and (156.64,77.17) .. (156.64,74.59) -- cycle ;
%Shape: Ellipse [id:dp5880824936999368] 
\draw  [fill={rgb, 255:red, 0; green, 0; blue, 0 }  ,fill opacity=1 ] (156.64,150.41) .. controls (156.64,147.83) and (158.73,145.73) .. (161.32,145.73) .. controls (163.9,145.73) and (166,147.83) .. (166,150.41) .. controls (166,153) and (163.9,155.09) .. (161.32,155.09) .. controls (158.73,155.09) and (156.64,153) .. (156.64,150.41) -- cycle ;
%Shape: Ellipse [id:dp21350134478173044] 
\draw  [fill={rgb, 255:red, 0; green, 0; blue, 0 }  ,fill opacity=1 ] (228.75,173.84) .. controls (228.75,171.26) and (230.85,169.16) .. (233.43,169.16) .. controls (236.02,169.16) and (238.11,171.26) .. (238.11,173.84) .. controls (238.11,176.43) and (236.02,178.52) .. (233.43,178.52) .. controls (230.85,178.52) and (228.75,176.43) .. (228.75,173.84) -- cycle ;
%Straight Lines [id:da4627604338887872] 
\draw    (233.43,51.16) -- (233.43,173.84) ;
%Straight Lines [id:da7861834478294226] 
\draw    (161.32,150.41) -- (278,112.5) ;
%Straight Lines [id:da8339806164702321] 
\draw    (161.32,74.59) -- (278,112.5) ;
%Straight Lines [id:da9559695961474568] 
\draw    (161.32,74.59) -- (233.43,173.84) ;
%Straight Lines [id:da5604831037766694] 
\draw    (233.43,51.16) -- (161.32,150.41) ;
%Shape: Ellipse [id:dp5881175719857806] 
\draw  [fill={rgb, 255:red, 0; green, 0; blue, 0 }  ,fill opacity=1 ] (109.64,118.41) .. controls (109.64,115.83) and (111.73,113.73) .. (114.32,113.73) .. controls (116.9,113.73) and (119,115.83) .. (119,118.41) .. controls (119,121) and (116.9,123.09) .. (114.32,123.09) .. controls (111.73,123.09) and (109.64,121) .. (109.64,118.41) -- cycle ;
%Shape: Ellipse [id:dp8550000321603416] 
\draw  [fill={rgb, 255:red, 0; green, 0; blue, 0 }  ,fill opacity=1 ] (286.64,62.41) .. controls (286.64,59.83) and (288.73,57.73) .. (291.32,57.73) .. controls (293.9,57.73) and (296,59.83) .. (296,62.41) .. controls (296,65) and (293.9,67.09) .. (291.32,67.09) .. controls (288.73,67.09) and (286.64,65) .. (286.64,62.41) -- cycle ;
%Straight Lines [id:da685219063699458] 
\draw    (114.32,118.41) -- (161.32,74.59) ;
%Straight Lines [id:da06336601815742937] 
\draw    (114.32,118.41) -- (233.43,51.16) ;
%Straight Lines [id:da9549523951613874] 
\draw    (114.32,118.41) -- (278,112.5) ;
%Straight Lines [id:da08160170434150005] 
\draw    (161.32,150.41) -- (114.32,118.41) ;
%Straight Lines [id:da16526117498124504] 
\draw    (114.32,118.41) -- (233.43,173.84) ;
%Straight Lines [id:da07490128367989912] 
\draw    (233.43,51.16) -- (291.32,62.41) ;
%Straight Lines [id:da2031313420393107] 
\draw    (278,112.5) -- (291.32,62.41) ;
%Straight Lines [id:da99934703514752] 
\draw    (161.32,150.41) -- (291.32,62.41) ;
%Straight Lines [id:da250303620891962] 
\draw    (233.43,173.84) -- (291.32,62.41) ;
%Straight Lines [id:da5362226208717131] 
\draw    (161.32,74.59) -- (291.32,62.41) ;
%Shape: Circle [id:dp13182311455382922] 
\draw  [dash pattern={on 4.5pt off 4.5pt}] (129.32,108) .. controls (129.32,63.82) and (165.14,28) .. (209.32,28) .. controls (253.5,28) and (289.32,63.82) .. (289.32,108) .. controls (289.32,152.18) and (253.5,188) .. (209.32,188) .. controls (165.14,188) and (129.32,152.18) .. (129.32,108) -- cycle ;

% Text Node
\draw (93,108.4) node [anchor=north west][inner sep=0.75pt]    {$x$};
% Text Node
\draw (302,53.4) node [anchor=north west][inner sep=0.75pt]    {$y$};
% Text Node
\draw (163,43.4) node [anchor=north west][inner sep=0.75pt]  [font=\small]  {$K_{5} -C_{5}$};

\end{tikzpicture}}%
	\hfil
    }
    \caption{$Q_5$ is super-minimally $4$-connected, but not uniformly $4$-connected.}\label{fig_Q_5}
\end{figure}
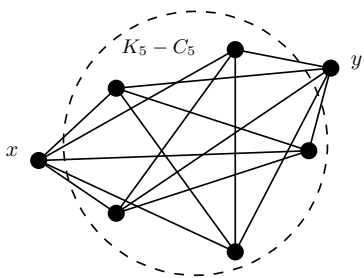
\end{center}

\noindent{\it Proof of Lemma~\ref{lem:inclusion lemma}.}
\setcounter{theorem}{6}
For all $k\geq 2$, it is clear that a super-minimally $k$-connected graph is also minimally $k$-connected. To prove (\romannum{1}), we suppose that $G$ is a uniformly $k$-connected graph and $H$ is a proper $k$-connected subgraph of $G$. First we show the following.

\begin{sublemma}
    $H$ is not a spanning subgraph of $G$.
\end{sublemma}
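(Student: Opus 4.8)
The plan is to argue by contradiction. Suppose $H$ is a spanning subgraph of $G$, so that $V(H)=V(G)$ while $E(H)\subsetneq E(G)$, the latter inclusion being strict because $H$ is a \emph{proper} subgraph. First I would fix an edge of $G$ that is missing from $H$: since the two edge sets differ, there are adjacent vertices $x$ and $y$ of $G$ with $xy\in E(G)\setminus E(H)$. Because $H$ spans $G$, both $x$ and $y$ lie in $V(H)$, and since $xy\notin E(H)$, they are nonadjacent in $H$.

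Next I would invoke the $k$-connectivity of $H$. As $x$ and $y$ are nonadjacent vertices of the $k$-connected graph $H$, Menger's theorem supplies $k$ internally disjoint $(x,y)$-paths $P_1,\dots,P_k$ lying in $H$. None of these is the single edge $xy$ (which is absent from $H$), so each $P_i$ has length at least two and therefore possesses at least one internal vertex.

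The key step is then to produce one extra path. In $G$ the edge $xy$ itself constitutes an $(x,y)$-path, and, having no internal vertices, it is internally disjoint from every $P_i$ and distinct from each of them. Hence $P_1,\dots,P_k$ together with the edge $xy$ form $k+1$ pairwise internally disjoint $(x,y)$-paths in $G$. This contradicts the hypothesis that $G$ is uniformly $k$-connected, under which $x$ and $y$ are joined by \emph{exactly} $k$ internally disjoint paths. We conclude that $H$ cannot be a spanning subgraph of $G$.

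I do not anticipate a genuine obstacle here; the only point demanding care is the bookkeeping of internal disjointness, namely the observation that the trivial path consisting of the single edge $xy$ is automatically internally disjoint from, and distinct from, the $k$ paths drawn from $H$ (the latter precisely because $xy\notin E(H)$). Everything else is immediate from Menger's theorem and the definition of uniform $k$-connectivity.
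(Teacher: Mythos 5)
Your proposal is correct and follows essentially the same argument as the paper: take an edge $xy\in E(G)\setminus E(H)$ (which exists since $H$ is a proper spanning subgraph), use the $k$-connectedness of $H$ to get $k$ internally disjoint $(x,y)$-paths inside $H$, and add the edge $xy$ itself as a $(k+1)$-st internally disjoint path, contradicting uniform $k$-connectivity. The only difference is that you spell out the Menger step and the disjointness bookkeeping that the paper compresses into the word ``clearly.''
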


Assume that $H$ is a spanning subgraph of $G$. Then there is an edge $e$ in $E(G)-E(H)$ that joins two vertices $a$ and $b$ in $H$. Clearly, there are at least $k+1$ internally disjoint paths between $a$ and $b$, a contradiction.\\

Therefore, there is a vertex $v\in V(G)-V(H)$. Since $|V(H)|\geq k+1$, by Menger's Theorem, there are two internally disjoint paths $P_1,P_2$ joining $v$ to distinct vertices $u_1$ and $u_2$, respectively, such that $V(P_1)\cap V(H)=\{u_1\}$ and $V(P_2)\cap V(H)=\{u_2\}$. Therefore, there are at least $k+1$ internally disjoint $u_1u_2$-paths in $G$, consisting of $k$ such paths contained in $H$ and one formed by $P_1\cup P_2$. Thus $G$ is not uniformly $k$-connected, a contradiction. We conclude that $G$ does not have a proper $k$-connected subgraph, so $G$ is super-minimally $k$-connected.

When $k \geq 3$, it is clear that, for all $i\geq 1$, the complete bipartite graph $K_{k,k+i}$ is minimally, but not super-minimally, $k$-connected. This shows that the converse of (\romannum{2}) fails. Moreover, the failure of the converse of (\romannum{1}) is demonstrated by the examples provided earlier in this section.
\qed\\

We note that the condition $k \geq 2$ in Lemma~\ref{lem:inclusion lemma} is necessary. For $k = 1$, it is elementary to observe that a graph is minimally $1$-connected if and only if it is a nontrivial tree. Beineke, Oellermann, and Pippert~\cite{BOP2002} observed that the uniformly $1$-connected graphs are also nontrivial trees. However, the only super-minimally $1$-connected graph is the two-vertex graph $K_2$, so (\romannum{1}) in Lemma~\ref{lem:inclusion lemma} fails in this case. Moreover, when $k = 2$, the converse of (\romannum{2}) in Lemma~\ref{lem:inclusion lemma} does not hold. But the converse of (\romannum{1}) holds. Beineke, Oellermann, and Pippert~\cite{BOP2002} observed that the only uniformly $2$-connected graphs are cycles, and, by Proposition~\ref{prop:sm2c}, these are the only super-minimally $2$-connected graphs. Figure~\ref{fig_venn} presents a Venn diagram illustrating the relationships among super-minimally, minimally, critically, and uniformly $3$-connected graphs, along with representative examples in each region. 

\begin{center}
    \begin{figure}[htb]
    \hbox to \hsize{
	\hfil
	\resizebox{12.5cm}{!}{\tikzset{every picture/.style={line width=0.75pt}} %set default line width to 0.75pt        

\begin{tikzpicture}[x=0.75pt,y=0.75pt,yscale=-1,xscale=1]
%uncomment if require: \path (0,424); %set diagram left start at 0, and has height of 424

%Shape: Rectangle [id:dp6091908227865842] 
\draw   (38,19) -- (606,19) -- (606,384.5) -- (38,384.5) -- cycle ;
%Shape: Ellipse [id:dp6427332570970515] 
\draw   (58,193.75) .. controls (58,112.15) and (146.2,46) .. (255,46) .. controls (363.8,46) and (452,112.15) .. (452,193.75) .. controls (452,275.35) and (363.8,341.5) .. (255,341.5) .. controls (146.2,341.5) and (58,275.35) .. (58,193.75) -- cycle ;
%Shape: Ellipse [id:dp2391151450531136] 
\draw   (201,193.75) .. controls (201,112.15) and (289.2,46) .. (398,46) .. controls (506.8,46) and (595,112.15) .. (595,193.75) .. controls (595,275.35) and (506.8,341.5) .. (398,341.5) .. controls (289.2,341.5) and (201,275.35) .. (201,193.75) -- cycle ;

%Shape: Ellipse [id:dp16487166843935674] 
\draw   (209.5,177.5) .. controls (209.5,128.62) and (261.88,89) .. (326.5,89) .. controls (391.12,89) and (443.5,128.62) .. (443.5,177.5) .. controls (443.5,226.38) and (391.12,266) .. (326.5,266) .. controls (261.88,266) and (209.5,226.38) .. (209.5,177.5) -- cycle ;
%Shape: Ellipse [id:dp9297033484443449] 
\draw   (225.5,150) .. controls (225.5,129.01) and (270.72,112) .. (326.5,112) .. controls (382.28,112) and (427.5,129.01) .. (427.5,150) .. controls (427.5,170.99) and (382.28,188) .. (326.5,188) .. controls (270.72,188) and (225.5,170.99) .. (225.5,150) -- cycle ;

% Text Node
\draw (276.5,353) node [anchor=north west][inner sep=0.75pt]   [align=left] {$\displaystyle 3$-connected: $\displaystyle K_{5}$};
% Text Node
\draw (78,166.62) node [anchor=north west][inner sep=0.75pt]   [align=left] {minimally \\$\displaystyle 3$-connected: $\displaystyle K_{3,4}$};
% Text Node
\draw (472,161.8) node [anchor=north west][inner sep=0.75pt]   [align=left] {critically \\$\displaystyle 3$-connected: \\$\displaystyle W^{+}$(3,3,3)};
% Text Node
\draw (265.5,195.75) node [anchor=north west][inner sep=0.75pt]   [align=left] {super-minimally \\$\displaystyle 3$-connected: $\displaystyle A_{4}{}$};
% Text Node
\draw (289.5,285.17) node [anchor=north west][inner sep=0.75pt]    {$W( 3,3,3,3)$};
% Text Node
\draw (237,141) node [anchor=north west][inner sep=0.75pt]   [align=left] {uniformly $\displaystyle 3$-connected: $\displaystyle K_{3,3}$$ $};

\end{tikzpicture}}%
	\hfil
    }
    \caption{A Venn diagram of super-minimally, minimally, critically, and uniformly $3$-connected graphs.}\label{fig_venn}
\end{figure}
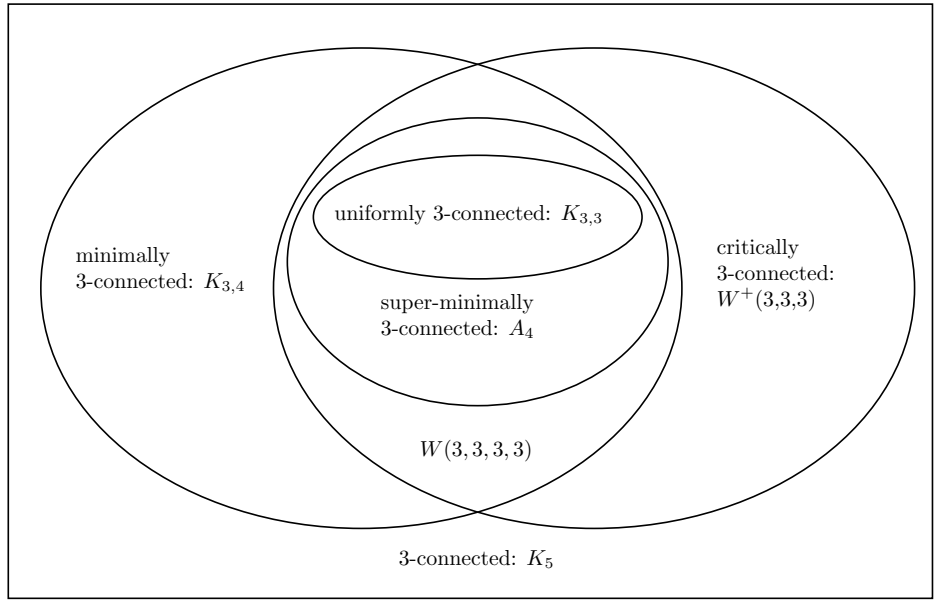
\end{center}

\begin{lemma}\label{lem:m3c and c3c}
    Let $G$ be a minimally and critically $3$-connected graph and $F$ be the subgraph induced by the vertices of degree greater than three. Then $G/E(F)$ is both minimally and critically $3$-connected. In particular, $G/E(F)$ is simple.
\end{lemma}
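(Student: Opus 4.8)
The plan is to reduce the entire statement to a single-edge contraction that is then iterated along the forest $F$. By Lemma~\ref{lem:Mader_induce_forest}, the graph $F$ is a forest, and since every vertex of $F$ has degree at least four in $G$, each edge of $F$ has order at least four. I would therefore first establish the following local statement: \emph{if $H$ is minimally and critically $3$-connected and $e=xy$ is an edge of $H$ of order at least four, then $H/e$ is again minimally and critically $3$-connected.} Granting this, let $E(F)=\{f_1,\dots,f_n\}$, set $H_0=G$ and $H_i=H_{i-1}/f_i$, so that $H_n=G/E(F)$. Contracting an order-$\ge 4$ edge merges two vertices of degree at least four into one of degree at least six and leaves all other degrees unchanged (there are no common neighbours, since by Lemma~\ref{lem:Halin_two_deg3_vtx} no triangle contains an edge of order at least four); hence each remaining edge $f_{i+1},\dots,f_n$ still has order at least four in $H_i$, and the local statement applies at every step. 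This yields that $G/E(F)=H_n$ is minimally and critically $3$-connected; simplicity is then automatic, since a minimally $3$-connected graph is simple.

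For the minimality half of the local statement I would simply invoke Lemma~\ref{lem:Halin_contract_ord4}. The heart of the matter is criticality, i.e.\ showing that $(H/e)-w$ fails to be $3$-connected for every vertex $w$ of $H/e$; write $z$ for the vertex of $H/e$ obtained by contracting $e$. For $w\ne z$ we have $(H/e)-w=(H-w)/e$. If $w$ has a neighbour of degree three in $H/e$ (necessarily distinct from $z$, which has degree at least six), then that neighbour has degree two in $(H/e)-w$, so $(H/e)-w$ is not $3$-connected. Otherwise every neighbour of $w$ in $H/e$ has degree at least four; using that no vertex is adjacent to both $x$ and $y$, this forces every neighbour of $w$ in $H$ to have degree at least four, so $H-w$ has minimum degree at least three. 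The single edge $e$ is a forest in $H-w$ and $(H-w)/e=(H/e)-w$ is simple, so if $(H/e)-w$ were $3$-connected then Lemma~\ref{lem:contracting forest} would make $H-w$ $3$-connected, contradicting the criticality of $H$; hence $(H/e)-w$ is not $3$-connected.

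It remains to treat $w=z$, which is where I expect the main difficulty. Here $(H/e)-z=H-\{x,y\}$, and I must deduce the failure of $3$-connectivity of $H-\{x,y\}$ from the criticality of $H$ at a single vertex. Since $H-x$ is not $3$-connected but is $2$-connected, it has a $2$-cut $\{a,b\}$, splitting $(H-x)-\{a,b\}$ into components $C_1,\dots,C_r$ with $r\ge 2$. If $y\in\{a,b\}$ then the other of $a,b$ is a cutvertex of $H-\{x,y\}$. If $y\notin\{a,b\}$, then $y$ lies in some $C_i$, and because $d_H(y)\ge 4$ the vertex $y$ has at least three neighbours in $H-x$, at most two of them equal to $a$ or $b$; thus $y$ has a neighbour in $C_i$, so $C_i-y\ne\varnothing$ and $\{a,b\}$ remains a $2$-cut of $H-\{x,y\}$. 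Either way $(H/e)-z$ is not $3$-connected, completing the local statement and hence the proof. The crucial point in this last case is the degree hypothesis $d_H(y)\ge 4$, which alone prevents the deleted vertex $y$ from exhausting a component and thereby dissolving the cut; this is exactly where the order-$\ge 4$ assumption on $e$ is used.
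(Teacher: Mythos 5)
Your proposal is correct, but it takes a genuinely different route from the paper on the critical half of the statement. The paper never isolates a single-edge statement: it contracts all of $E(F)$ at once, and handles criticality with one global argument. Writing $H=G/E(F)$, it observes that every edge of $H$ meets a degree-$3$ vertex (any edge of $G$ joining two vertices of degree greater than three lies in $E(F)$), so the vertices of degree greater than three form a stable set in $H$; consequently, if $H-v$ were $3$-connected, then $v$ could not have a degree-$3$ neighbour, forcing $v$ to be a degree-$3$ vertex of $G$ all of whose neighbours in $G$ have degree at least four. One application of Lemma~\ref{lem:contracting forest} to $G-v$ with the whole forest $F$ then contradicts the criticality of $G$. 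In particular, the paper's stability observation makes the case you found hardest --- deleting the contracted vertex --- disappear entirely: a contracted vertex has degree at least four in $H$, and stability rules it out as a candidate for $v$. Your localized statement, by contrast, cannot exploit any such structure, so you must prove directly that $H-\{x,y\}$ is not $3$-connected; your argument for this (take a $2$-cut $\{a,b\}$ of $H-x$, and use $d_H(y)\ge 4$ to show $y$ cannot exhaust a component of $(H-x)-\{a,b\}$) is correct and is an idea with no counterpart in the paper. What your approach buys is a stronger standalone result: an upgrade of Lemma~\ref{lem:Halin_contract_ord4} asserting that contracting an edge of order at least four preserves not only minimal but also critical $3$-connectivity, which is of independent interest; the iteration along $F$ is then routine (your justification that the remaining forest edges keep order at least four, via Lemma~\ref{lem:Halin_two_deg3_vtx}, is exactly right). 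What the paper's approach buys is brevity: no induction, no contracted-vertex case, and a single invocation of Lemma~\ref{lem:contracting forest}. Both proofs rest on the same two pillars, Lemma~\ref{lem:Halin_contract_ord4} for minimality and Lemma~\ref{lem:contracting forest} for the criticality contradiction.
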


\begin{proof}
    Let $H=G/E(F)$. By repeated application of Lemma~\ref{lem:Halin_contract_ord4}, we deduce that $H$ is minimally $3$-connected, so $H$ is simple. Let $V_3(G)$ be the set of degree-$3$ vertices in $G$ and let $V_3(H)$ be the set of degree-$3$ vertices in $H$. Since each degree-$3$ vertex in $G$ is only incident to edges in $E(G)-E(F)$, we know $V_3(G)=V_3(H)$. Moreover, each edge in $H$ is incident to at least one vertex in $V_3(H)$. Hence, in $H$, the set $A$ of vertices of degree greater than three forms a stable set. Suppose that there is a vertex $v$ of $H$ such that $H-v$ is $3$-connected. Observe that $v$ cannot have a neighbor of degree three. Therefore, $N_H(v)\subseteq A$ and $v$ is not incident to degree-$3$ vertices in $G$. Thus the minimum degree of $G-v$ is at least three. Since $H-v=(G-v)/E(F)$, and $H-v$ is simple and $3$-connected, by Lemma~\ref{lem:contracting forest}, $G-v$ is $3$-connected, a contradiction. We conclude that such a vertex $v$ does not exist and $H$ is critically $3$-connected.
\end{proof}

Since a super-minimally $3$-connected graph is both minimally $3$-connected and critically $3$-connected, the next result follows immediately from Lemma~\ref{lem:m3c and c3c}.

\begin{corollary}
    Let $G$ be a super-minimally $3$-connected graph and $F$ be the subgraph induced by the vertices of degree greater than three. Then $G/E(F)$ is both minimally and critically $3$-connected. In particular, $G/E(F)$ is simple.
\end{corollary}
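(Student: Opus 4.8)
The plan is to recognize that this corollary is an immediate specialization of Lemma~\ref{lem:m3c and c3c}, so the only thing to verify is that the hypotheses of that lemma are satisfied. Concretely, I would first check that a super-minimally $3$-connected graph $G$ is both minimally $3$-connected and critically $3$-connected. This is the elementary observation already recorded at the start of this subsection, and it unwinds directly from the definition: since no proper subgraph of $G$ is $3$-connected, for every edge $e$ the spanning subgraph $G\setminus e$ is proper and hence not $3$-connected, which is precisely minimal $3$-connectivity; and for every vertex $v$ the induced subgraph $G-v$ is proper and hence not $3$-connected, which is precisely critical $3$-connectivity.

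With both properties established, I would simply apply Lemma~\ref{lem:m3c and c3c} to $G$ and to the forest $F$ induced by the vertices of degree greater than three. The lemma then yields at once that $G/E(F)$ is both minimally and critically $3$-connected, and in particular simple. Every assertion in the corollary is verbatim a conclusion of Lemma~\ref{lem:m3c and c3c} under these hypotheses, so no additional computation is required.

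There is essentially no obstacle here, since all of the real work is carried out in Lemma~\ref{lem:m3c and c3c} (and, through it, in Lemmas~\ref{lem:Halin_contract_ord4} and~\ref{lem:contracting forest}). The corollary is a one-line deduction: super-minimal $3$-connectivity implies both minimal and critical $3$-connectivity, and Lemma~\ref{lem:m3c and c3c} converts exactly these two hypotheses into the desired conclusion about $G/E(F)$.
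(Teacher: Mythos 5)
Your proposal is correct and matches the paper's proof exactly: the paper likewise observes that a super-minimally $3$-connected graph is both minimally and critically $3$-connected and then deduces the corollary immediately from Lemma~\ref{lem:m3c and c3c}. Your unpacking of why super-minimality implies the two hypotheses (deleting any edge or vertex yields a proper, hence non-$3$-connected, subgraph) is the same elementary observation the paper records at the start of that subsection.
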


We note that if $G$ is super-minimally $3$-connected and $e$ is an edge of order at least four in $G$, the contraction $G/e$ is not necessarily super-minimally $3$-connected. Consider the graph $G$ and the edge $e$ shown in Figure~\ref{fig_not_sm3c}(a). One can verify that $G$ is super-minimally $3$-connected and that $e$ is an edge of order four. However, the contraction $G/e$ (see Figure~\ref{fig_not_sm3c}(b)) is not super-minimally $3$-connected, since the subgraph obtained by deleting the two white vertices remains $3$-connected.

\begin{center}
    \begin{figure}[htb]
    \hbox to \hsize{
	\hfil
	\resizebox{10cm}{!}{\input{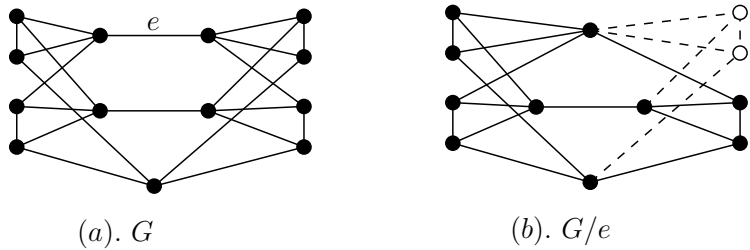}}%
	\hfil
    }
    \caption{$G$ is a super-minimally $3$-connected graph and $e$ is an edge of order four. However, $G/e$ is not super-minimally $3$-connected.}\label{fig_not_sm3c}
\end{figure}
\end{center}

\section{Lemmas for bipartite graphs}

In this section, we establish several lemmas concerning bipartite graphs.

\begin{lemma}\label{lem:bipartite general lemma}
Let $G$ be a bipartite graph with bipartition $(X,Y)$ such that $|Y| \ge 2|X| - k$, where $k$ is an integer, 
and $d_G(y) \ge 3$ for all $y \in Y$. 
Suppose that $\{A,B\}$ is a $2$-separation of $G$, and let $S = Y \cap V(A) \cap V(B)$. 
Then $G$ has a subgraph $G' \in \{A-S,\, B-S\}$ with bipartition $(X',Y')$ such that $|X'| < |X|$ and $|Y'| \ge 2|X'| - \lfloor \tfrac{k+4}{2} \rfloor$. 
Moreover, $d_G(y) = d_{G'}(y)$ for all $y \in Y'$.
\end{lemma}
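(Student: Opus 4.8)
The plan is to analyze the $2$-separation directly through the bipartition. Write $\{s_1,s_2\}=V(A)\cap V(B)$ for the two separating vertices and partition each side of the bipartition according to which piece of the separation it lies in: set $X_A=(V(A)\setminus V(B))\cap X$, $X_B=(V(B)\setminus V(A))\cap X$, and $X_S=\{s_1,s_2\}\cap X$, and define $Y_A,Y_B$ analogously, together with $S=\{s_1,s_2\}\cap Y$. Writing $s=|S|$, we have $|X_S|=2-s$, so that $|X|=|X_A|+|X_B|+(2-s)$ and $|Y|=|Y_A|+|Y_B|+s$. The two candidates for $G'$ are $A-S$ and $B-S$, with bipartitions $(X_A\cup X_S,\,Y_A)$ and $(X_B\cup X_S,\,Y_B)$ respectively, so the task is to verify that at least one of them meets all three requirements.

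First I would dispose of the degree condition. A vertex $y\in Y_A$ lies in $V(A)\setminus V(B)$, so every edge of $G$ incident with $y$ lies in $A$; since the neighbours of $y$ lie in $X$ while $S\subseteq Y$, deleting $S$ removes none of them, giving $d_{A-S}(y)=d_A(y)=d_G(y)$. The identical argument applies to $B-S$, so the degree requirement $d_G(y)=d_{G'}(y)$ holds automatically for whichever candidate we select.

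Next I would show that $|X'|<|X|$ holds for \emph{both} candidates, which reduces to proving $|X_A|\ge 1$ and $|X_B|\ge 1$. This is the heart of the matter. If $X_A=\emptyset$, then $|V(A)|=2+|Y_A|\ge 3$ forces some $y\in Y_A$, whose at least three neighbours must all lie in $X\cap V(A)=X_S$, a set of size $2-s\le 2$, which is impossible. Hence $X_A\neq\emptyset$, and symmetrically $X_B\neq\emptyset$. Since $|X|=|X_A|+|X_B|+(2-s)$ strictly exceeds both $|X_A|+(2-s)$ and $|X_B|+(2-s)$, condition $|X'|<|X|$ holds for each of $A-S$ and $B-S$.

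It then remains only to choose the candidate meeting the density bound. Set $e_A=|Y_A|-\bigl(2|X_A|+4-2s-\lfloor\frac{k+4}{2}\rfloor\bigr)$ and define $e_B$ analogously, so that $A-S$ (respectively $B-S$) satisfies $|Y'|\ge 2|X'|-\lfloor\frac{k+4}{2}\rfloor$ exactly when $e_A\ge 0$ (respectively $e_B\ge 0$). Substituting the hypothesis $|Y|\ge 2|X|-k$ into $e_A+e_B$ and using $2\lfloor\frac{k+4}{2}\rfloor\ge k+3$, a short computation yields $e_A+e_B\ge s-1\ge -1$. As $e_A$ and $e_B$ are integers, they cannot both be negative, so at least one is nonnegative, and I would take $G'$ to be the corresponding candidate. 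I expect the main obstacle to be the coordination of the two size conditions: a priori the side forced on us by the density bound might be the one violating $|X'|<|X|$, and the argument succeeds only because the degree hypothesis secretly guarantees $|X_A|,|X_B|\ge 1$, making $|X'|<|X|$ free on both sides and leaving the averaging inequality to dictate the choice.
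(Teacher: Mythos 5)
Your proposal is correct and takes essentially the same route as the paper's proof: both split $X$ and $Y$ across the separation, establish that $X\cap(V(A)-V(B))$ and $X\cap(V(B)-V(A))$ are nonempty by trapping a degree-$\ge 3$ vertex of $Y$ on one side (which gives the $|X'|<|X|$ condition for both candidates), and then apply the same counting-plus-integrality step, via $2\lfloor\tfrac{k+4}{2}\rfloor\ge k+3$, to show at least one side meets the density bound. The only difference is bookkeeping: the paper works with the overlapping sets $X\cap V(A)$ and $X\cap V(B)$ instead of your disjoint decomposition $X_A,X_B,X_S$, and phrases the averaging step as a direct either/or rather than through the surpluses $e_A,e_B$.
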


\begin{proof}
Let $X_A = X \cap V(A)$ and $X_B = X \cap V(B)$. Suppose that $X \cap (V(A)-V(B)) = \emptyset$. Then there exists a vertex $y \in Y \cap (V(A)-V(B))$. Since $d_A(y) = d_G(y) \ge 3$, at least one neighbor of $y$ lies in $X \cap (V(A)-V(B))$, a contradiction. By symmetry, it follows that $X \cap (V(B)-V(A)) \ne \emptyset$. Thus we conclude that 
\begin{sublemma}\label{sublem:size smaller}
    $|X_A| < |X|$ and $|X_B| < |X|$.
\end{sublemma}

For each vertex $y \in Y-S$, either $N_G(y) \subseteq X_A$ or $N_G(y) \subseteq X_B$, but not both. Define $Y_A=\{y\in Y-S:N_G(y)\subseteq X_A\}$ and $Y_B=\{y\in Y-S:N_G(y)\subseteq X_B\}$. Suppose $|S| = m$. Clearly $m \in \{0,1,2\}$, and we have $|X_A| + |X_B| = |X| + 2 - m$. Therefore,
\begin{align*}
    |Y_A| + |Y_B| &= |Y| - m \\
                   &\ge 2|X| - k - m \\
                   &= 2|X_A| + 2|X_B| - k - 4 + m \\
                   &\ge 2|X_A| + 2|X_B| - k - 4.
\end{align*}
It follows that either $|Y_A| \ge 2|X_A| - \lfloor \tfrac{k+4}{2} \rfloor$ or $|Y_B| \ge 2|X_B| - \lfloor \tfrac{k+4}{2} \rfloor$. Without loss of generality, assume that $|Y_A| \ge 2|X_A| - \lfloor \tfrac{k+4}{2} \rfloor$. Let $G'$ be the graph $A-S = G[X_A \cup Y_A]$. Then $G'$ is a bipartite graph with bipartition $(X_A,Y_A)$ satisfying $|Y_A| \ge 2|X_A| - \lfloor \tfrac{k+4}{2} \rfloor$. By~\ref{sublem:size smaller}, we have $|X_A| < |X|$. Moreover, since $N_G(y) \subseteq X_A$, for every $y \in Y_A$, it follows that $d_G(y) = d_{G'}(y)$ for all $y \in Y_A$.
\end{proof}

A bipartite graph $G$ with bipartition $(X,Y)$ is {\it semi-cubic} if $d_G(y)=3$ for all $y\in Y$. When $G$ is a bipartite graph with a fixed bipartition $(X,Y)$, we say $H$ is a {\it $K_{s,t}$-subgraph of $G$} if $H$ is isomorphic to $K_{s,t}$ such that $|V(H)\cap X|=s$ and $|V(H)\cap Y|=t$. Let $S$ be a set of vertices of $G$. The {\it open neighborhood $N(S)$ of $S$} is $\cup_{s\in S}N(s)-S$, and the {\it closed neighborhood $N[S]$ of $S$} is $\cup_{s\in S}N[s]$.

\begin{lemma}\label{lem:2n-4}
    Let $G$ be a semi-cubic bipartite graph with bipartition $(X,Y)$. If $|X|\geq 3$ and $|Y|\geq 2|X|-4$, then $G$ has a subgraph $H$ such that
    \begin{enumerate}
        \item[(\romannum{1})] $H$ is a $K_{3,2}$-subgraph, or
        \item[(\romannum{2})] $H$ is $3$-connected and there is a vertex $h\in V(H)\cap Y$ such that $H-h$ is internally $3$-connected.
    \end{enumerate}
\end{lemma}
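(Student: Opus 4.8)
The plan is to argue by strong induction on $|X|$, with Lemma~\ref{lem:bipartite general lemma} applied at $k=4$ serving as the engine of the reduction. The value $k=4$ is exactly what makes the induction close: since $\lfloor\tfrac{k+4}{2}\rfloor=4$, a $2$-separation of $G$ produces a proper \emph{semi-cubic} subgraph $G'$ with bipartition $(X',Y')$ satisfying $|X'|<|X|$ and $|Y'|\ge 2|X'|-4$, which is the very hypothesis we started from (the ``moreover'' clause of Lemma~\ref{lem:bipartite general lemma} preserves $d(y)=3$). Throughout I would keep in play the basic dichotomy: if two vertices of $Y$ share a neighborhood in $X$, then, as each has degree exactly three, these two vertices together with their three common neighbors form a $K_{3,2}$-subgraph and outcome~(\romannum{1}) holds. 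Hence I may assume that $y\mapsto N_G(y)$ is injective, i.e.\ the triples $\{N_G(y):y\in Y\}$ are pairwise distinct.

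For the base case $|X|=3$, the hypothesis $|Y|\ge 2$ together with $d_G(y)=3$ forces $N_G(y)=X$ for every $y\in Y$, so any two vertices of $Y$ give a $K_{3,2}$. For the inductive step with $|X|\ge 4$, I split on whether $G$ is $3$-connected. If $G$ is \emph{not} $3$-connected, I first reduce to the connected case: if $G$ is disconnected, then comparing $2|X|-|Y|\le 4$ against the components shows some component $C$ with $|X_C|\ge 3$ still satisfies $|Y_C|\ge 2|X_C|-4$ (and has $|X_C|<|X|$), so I recurse into $C$; if $G$ is connected it has a $2$-separation by the equivalences in the excerpt. Applying Lemma~\ref{lem:bipartite general lemma} with $k=4$ to this $2$-separation yields a semi-cubic $G'\in\{A-S,\,B-S\}$ with $|X'|<|X|$ and $|Y'|\ge 2|X'|-4$. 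One then checks the side can be taken with $|X'|\ge 3$: a side carrying a vertex of $Y'$ automatically has $|X'|\ge 3$, while a side with at most two $X$-vertices carries no $Y$-vertex, and the counting in Lemma~\ref{lem:bipartite general lemma} lets one recover the $Y$-bound on the opposite side in that event. The inductive hypothesis applied to $G'$ then delivers $H$; as $H\subseteq G'\subseteq G$ and the three relevant properties (being a $K_{3,2}$-subgraph; being $3$-connected; possessing a vertex $h\in Y$ with $H-h$ internally $3$-connected) are all intrinsic to $H$, the conclusion transfers verbatim to $G$.

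The remaining, and genuinely hard, case is when $G$ is itself $3$-connected. Here I would take $H=G$ and try to produce $h\in Y$ with $G-h$ internally $3$-connected. Deleting a degree-$3$ vertex from a $3$-connected graph leaves a $2$-connected graph, so the task is to find $h$ for which every $2$-separation of $G-h$ has a $P_3$ side. The natural route is by contradiction: if $G-h$ fails to be internally $3$-connected for \emph{every} $h\in Y$, then each such $h$ gives a $2$-cut $\{u,v\}$ of $G-h$ with both sides nontrivial; since $G$ is $3$-connected, $\{u,v\}$ does not separate $G$, so $h$ is the unique bridge between the two sides and $\{u,v,h\}$ is a $3$-cut of $G$ with $h\in Y$. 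I would then analyze this family of $3$-cuts — using the semi-cubic structure, the injectivity of $y\mapsto N_G(y)$, the density bound $|Y|\ge 2|X|-4$, and connectivity tools such as Lemma~\ref{lem: cleaving is 3-con} — to force either a repeated neighborhood (hence a $K_{3,2}$, contradicting injectivity) or an outright contradiction, thereby exhibiting a valid $h$. This step, together with the boundary bookkeeping for small $|X|$ (in particular $|X|=4$, where a $2$-separation with a two-vertex $X$-side can occur), is where I expect the main difficulty to lie; the reductive machinery of Lemma~\ref{lem:bipartite general lemma} disposes of everything else cleanly.
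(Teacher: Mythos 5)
Your skeleton matches the paper's proof: induction on $|X|$ with Lemma~\ref{lem:bipartite general lemma} at $k=4$ as the engine, the base case $|X|=3$ via forced $K_{3,2}$'s, and recursion through a $2$-separation when $G$ is not $3$-connected. Your recovery claim in that last reduction (if the side returned by Lemma~\ref{lem:bipartite general lemma} has at most two $X$-vertices, then it carries no $Y$-vertex and the complementary side satisfies $|Y'|\ge 2|X'|-4$ with $3\le |X'|<|X|$) does check out, although you assert it without the counting. The genuine gap is the case you yourself flag as the main difficulty: $G$ $3$-connected. There you only state a plan (``analyze this family of $3$-cuts \dots to force either a repeated neighborhood or an outright contradiction''), and the tools you list --- injectivity of $y\mapsto N_G(y)$, the density bound, Lemma~\ref{lem: cleaving is 3-con} --- are not what makes this case go through; the no-repeated-neighborhood reduction in particular plays no role in the real mechanism. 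The missing idea is that the induction hypothesis must keep doing the work \emph{inside} the $3$-connected case, in two ways. First, applying it to $G-N[X']$ shows that $|N(X')|\ge 2|X'|+1$ for every $X'\subseteq X$ with $|X-X'|\ge 3$ (otherwise $G-N[X']$ is a smaller semi-cubic graph with $|Y-N(X')|\ge 2|X-X'|-4$ and one recurses); in particular every vertex of $X$ has degree at least three. Second, for an \emph{arbitrary} $u\in Y$ and an \emph{arbitrary} $2$-separation $\{C,D\}$ of $G-u$, applying Lemma~\ref{lem:bipartite general lemma} and the induction hypothesis to $G-u$ shows one side, say $C$, has at most two $X$-vertices; the expansion property then forces $V(C)-V(D)$ to be a single $X$-vertex whose neighbors are the two cut vertices together with $u$, so $C\cong P_3$. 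Hence $G-u$ is internally $3$-connected for \emph{every} $u\in Y$ --- there is no need to hunt for one good $h$ --- and $G$ itself satisfies (\romannum{2}). Without these two further uses of the induction hypothesis, your intended contradiction never materializes.

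A smaller but real point: $|X|=4$ cannot be dismissed as ``boundary bookkeeping'' inside the inductive step. The step that pins the small side of a $2$-separation of $G-u$ down to a $P_3$ excludes two vertices $x_1,x_2\in V(C)-V(D)$ by applying the expansion property to $X'=\{x_1,x_2\}$, which requires $|X-X'|\ge 3$, i.e.\ $|X|\ge 5$. So $|X|=4$ must be a second base case, argued directly: if there is no $K_{3,2}$-subgraph, the hypotheses force $G$ to be $K_{4,4}$ minus a perfect matching, which is $3$-connected and has $G-y$ internally $3$-connected for every $y\in Y$. This is exactly how the paper starts its induction at $|X|\ge 5$.
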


\begin{proof}
     We argue by induction on $|X|$. When $|X| = 3$ and $|Y| \geq 2$, it is straightforward to verify that $G$ must contain a $K_{3,2}$-subgraph. When $|X| = 4$ and $|Y| \geq 4$, if $G$ does not contain a $K_{3,2}$-subgraph, then $G$ is isomorphic to the graph shown in Figure~\ref{fig_biregular graph}(a). It is easy to check that $G$ is $3$-connected and, for every $y \in Y$, the graph $G - y$ is isomorphic to the graph in Figure~\ref{fig_biregular graph}(b), which is internally $3$-connected. Therefore, the base cases for $|X| = 3$ and $|X| = 4$ are established.

\begin{center}
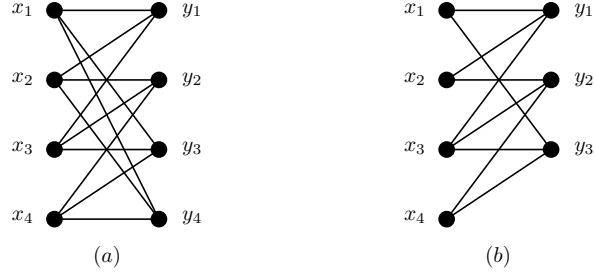
\begin{figure}[htb]
    \hbox to \hsize{
	\hfil
	\resizebox{8cm}{!}{\tikzset{every picture/.style={line width=0.75pt}} %set default line width to 0.75pt        

\begin{tikzpicture}[x=0.75pt,y=0.75pt,yscale=-1,xscale=1]
%uncomment if require: \path (0,300); %set diagram left start at 0, and has height of 300

%Shape: Ellipse [id:dp24339673990817168] 
\draw  [fill={rgb, 255:red, 0; green, 0; blue, 0 }  ,fill opacity=1 ] (132.82,68.2) .. controls (132.82,65.62) and (134.92,63.52) .. (137.5,63.52) .. controls (140.08,63.52) and (142.18,65.62) .. (142.18,68.2) .. controls (142.18,70.78) and (140.08,72.88) .. (137.5,72.88) .. controls (134.92,72.88) and (132.82,70.78) .. (132.82,68.2) -- cycle ;
%Shape: Ellipse [id:dp652054444247086] 
\draw  [fill={rgb, 255:red, 0; green, 0; blue, 0 }  ,fill opacity=1 ] (132.82,154.86) .. controls (132.82,152.28) and (134.92,150.18) .. (137.5,150.18) .. controls (140.08,150.18) and (142.18,152.28) .. (142.18,154.86) .. controls (142.18,157.45) and (140.08,159.54) .. (137.5,159.54) .. controls (134.92,159.54) and (132.82,157.45) .. (132.82,154.86) -- cycle ;
%Shape: Ellipse [id:dp8673978330288987] 
\draw  [fill={rgb, 255:red, 0; green, 0; blue, 0 }  ,fill opacity=1 ] (132.82,111.53) .. controls (132.82,108.95) and (134.92,106.85) .. (137.5,106.85) .. controls (140.08,106.85) and (142.18,108.95) .. (142.18,111.53) .. controls (142.18,114.12) and (140.08,116.21) .. (137.5,116.21) .. controls (134.92,116.21) and (132.82,114.12) .. (132.82,111.53) -- cycle ;
%Shape: Ellipse [id:dp3525289558072259] 
\draw  [fill={rgb, 255:red, 0; green, 0; blue, 0 }  ,fill opacity=1 ] (132.82,198.2) .. controls (132.82,195.62) and (134.92,193.52) .. (137.5,193.52) .. controls (140.08,193.52) and (142.18,195.62) .. (142.18,198.2) .. controls (142.18,200.78) and (140.08,202.88) .. (137.5,202.88) .. controls (134.92,202.88) and (132.82,200.78) .. (132.82,198.2) -- cycle ;

%Shape: Ellipse [id:dp834280777228159] 
\draw  [fill={rgb, 255:red, 0; green, 0; blue, 0 }  ,fill opacity=1 ] (197.82,68.2) .. controls (197.82,65.62) and (199.92,63.52) .. (202.5,63.52) .. controls (205.08,63.52) and (207.18,65.62) .. (207.18,68.2) .. controls (207.18,70.78) and (205.08,72.88) .. (202.5,72.88) .. controls (199.92,72.88) and (197.82,70.78) .. (197.82,68.2) -- cycle ;
%Shape: Ellipse [id:dp1286742986479983] 
\draw  [fill={rgb, 255:red, 0; green, 0; blue, 0 }  ,fill opacity=1 ] (197.82,154.86) .. controls (197.82,152.28) and (199.92,150.18) .. (202.5,150.18) .. controls (205.08,150.18) and (207.18,152.28) .. (207.18,154.86) .. controls (207.18,157.45) and (205.08,159.54) .. (202.5,159.54) .. controls (199.92,159.54) and (197.82,157.45) .. (197.82,154.86) -- cycle ;
%Shape: Ellipse [id:dp9635836609505576] 
\draw  [fill={rgb, 255:red, 0; green, 0; blue, 0 }  ,fill opacity=1 ] (197.82,111.53) .. controls (197.82,108.95) and (199.92,106.85) .. (202.5,106.85) .. controls (205.08,106.85) and (207.18,108.95) .. (207.18,111.53) .. controls (207.18,114.12) and (205.08,116.21) .. (202.5,116.21) .. controls (199.92,116.21) and (197.82,114.12) .. (197.82,111.53) -- cycle ;
%Shape: Ellipse [id:dp24278435710494917] 
\draw  [fill={rgb, 255:red, 0; green, 0; blue, 0 }  ,fill opacity=1 ] (197.82,198.2) .. controls (197.82,195.62) and (199.92,193.52) .. (202.5,193.52) .. controls (205.08,193.52) and (207.18,195.62) .. (207.18,198.2) .. controls (207.18,200.78) and (205.08,202.88) .. (202.5,202.88) .. controls (199.92,202.88) and (197.82,200.78) .. (197.82,198.2) -- cycle ;

%Straight Lines [id:da27240721229561415] 
\draw    (137.5,68.2) -- (202.5,68.2) ;
%Straight Lines [id:da3707836310814795] 
\draw    (137.5,111.53) -- (202.5,68.2) ;
%Straight Lines [id:da026583291617130467] 
\draw    (137.5,154.86) -- (202.5,68.2) ;
%Straight Lines [id:da18072492307470367] 
\draw    (137.5,111.53) -- (202.5,111.53) ;
%Straight Lines [id:da4850918649151501] 
\draw    (137.5,154.86) -- (202.5,111.53) ;
%Straight Lines [id:da8386815231262594] 
\draw    (137.5,198.2) -- (202.5,111.53) ;
%Straight Lines [id:da8001759215121729] 
\draw    (137.5,154.87) -- (202.5,154.87) ;
%Straight Lines [id:da678575259345755] 
\draw    (137.5,198.2) -- (202.5,154.86) ;
%Straight Lines [id:da14803897213542994] 
\draw    (137.5,68.2) -- (202.5,154.86) ;
%Straight Lines [id:da6148378861482346] 
\draw    (137.5,198.2) -- (202.5,198.2) ;
%Straight Lines [id:da6207187802213288] 
\draw    (137.5,68.2) -- (202.5,198.2) ;
%Straight Lines [id:da010036024237152552] 
\draw    (137.5,111.53) -- (202.5,198.2) ;

%Shape: Ellipse [id:dp9555046087152567] 
\draw  [fill={rgb, 255:red, 0; green, 0; blue, 0 }  ,fill opacity=1 ] (376.82,68.2) .. controls (376.82,65.62) and (378.92,63.52) .. (381.5,63.52) .. controls (384.08,63.52) and (386.18,65.62) .. (386.18,68.2) .. controls (386.18,70.78) and (384.08,72.88) .. (381.5,72.88) .. controls (378.92,72.88) and (376.82,70.78) .. (376.82,68.2) -- cycle ;
%Shape: Ellipse [id:dp21525678754909672] 
\draw  [fill={rgb, 255:red, 0; green, 0; blue, 0 }  ,fill opacity=1 ] (376.82,154.86) .. controls (376.82,152.28) and (378.92,150.18) .. (381.5,150.18) .. controls (384.08,150.18) and (386.18,152.28) .. (386.18,154.86) .. controls (386.18,157.45) and (384.08,159.54) .. (381.5,159.54) .. controls (378.92,159.54) and (376.82,157.45) .. (376.82,154.86) -- cycle ;
%Shape: Ellipse [id:dp8676813465441586] 
\draw  [fill={rgb, 255:red, 0; green, 0; blue, 0 }  ,fill opacity=1 ] (376.82,111.53) .. controls (376.82,108.95) and (378.92,106.85) .. (381.5,106.85) .. controls (384.08,106.85) and (386.18,108.95) .. (386.18,111.53) .. controls (386.18,114.12) and (384.08,116.21) .. (381.5,116.21) .. controls (378.92,116.21) and (376.82,114.12) .. (376.82,111.53) -- cycle ;
%Shape: Ellipse [id:dp7138251320676295] 
\draw  [fill={rgb, 255:red, 0; green, 0; blue, 0 }  ,fill opacity=1 ] (376.82,198.2) .. controls (376.82,195.62) and (378.92,193.52) .. (381.5,193.52) .. controls (384.08,193.52) and (386.18,195.62) .. (386.18,198.2) .. controls (386.18,200.78) and (384.08,202.88) .. (381.5,202.88) .. controls (378.92,202.88) and (376.82,200.78) .. (376.82,198.2) -- cycle ;

%Shape: Ellipse [id:dp39065323038676747] 
\draw  [fill={rgb, 255:red, 0; green, 0; blue, 0 }  ,fill opacity=1 ] (441.82,68.2) .. controls (441.82,65.62) and (443.92,63.52) .. (446.5,63.52) .. controls (449.08,63.52) and (451.18,65.62) .. (451.18,68.2) .. controls (451.18,70.78) and (449.08,72.88) .. (446.5,72.88) .. controls (443.92,72.88) and (441.82,70.78) .. (441.82,68.2) -- cycle ;
%Shape: Ellipse [id:dp6039527797960682] 
\draw  [fill={rgb, 255:red, 0; green, 0; blue, 0 }  ,fill opacity=1 ] (441.82,154.86) .. controls (441.82,152.28) and (443.92,150.18) .. (446.5,150.18) .. controls (449.08,150.18) and (451.18,152.28) .. (451.18,154.86) .. controls (451.18,157.45) and (449.08,159.54) .. (446.5,159.54) .. controls (443.92,159.54) and (441.82,157.45) .. (441.82,154.86) -- cycle ;
%Shape: Ellipse [id:dp387425315024873] 
\draw  [fill={rgb, 255:red, 0; green, 0; blue, 0 }  ,fill opacity=1 ] (441.82,111.53) .. controls (441.82,108.95) and (443.92,106.85) .. (446.5,106.85) .. controls (449.08,106.85) and (451.18,108.95) .. (451.18,111.53) .. controls (451.18,114.12) and (449.08,116.21) .. (446.5,116.21) .. controls (443.92,116.21) and (441.82,114.12) .. (441.82,111.53) -- cycle ;
%Straight Lines [id:da46835194692728355] 
\draw    (381.5,68.2) -- (446.5,68.2) ;
%Straight Lines [id:da39429749304791795] 
\draw    (381.5,111.53) -- (446.5,68.2) ;
%Straight Lines [id:da32320153964257814] 
\draw    (381.5,154.86) -- (446.5,68.2) ;
%Straight Lines [id:da7184360681395434] 
\draw    (381.5,111.53) -- (446.5,111.53) ;
%Straight Lines [id:da5717300936201039] 
\draw    (381.5,154.86) -- (446.5,111.53) ;
%Straight Lines [id:da4358609336361553] 
\draw    (381.5,198.2) -- (446.5,111.53) ;
%Straight Lines [id:da12874452131440417] 
\draw    (381.5,154.87) -- (446.5,154.87) ;
%Straight Lines [id:da3107438335742516] 
\draw    (381.5,198.2) -- (446.5,154.86) ;
%Straight Lines [id:da10457824381647518] 
\draw    (381.5,68.2) -- (446.5,154.86) ;

% Text Node
\draw (215.5,62.4) node [anchor=north west][inner sep=0.75pt]    {$y_{1}$};
% Text Node
\draw (215.5,105.4) node [anchor=north west][inner sep=0.75pt]    {$y_{2}$};
% Text Node
\draw (215.5,148.4) node [anchor=north west][inner sep=0.75pt]    {$y_{3}$};
% Text Node
\draw (215.5,191.4) node [anchor=north west][inner sep=0.75pt]    {$y_{4}$};
% Text Node
\draw (109.5,62.4) node [anchor=north west][inner sep=0.75pt]    {$x_{1}$};
% Text Node
\draw (109.5,105.4) node [anchor=north west][inner sep=0.75pt]    {$x_{2}$};
% Text Node
\draw (109.5,148.4) node [anchor=north west][inner sep=0.75pt]    {$x_{3}$};
% Text Node
\draw (109.5,191.4) node [anchor=north west][inner sep=0.75pt]    {$x_{4}$};
% Text Node
\draw (353.5,191.4) node [anchor=north west][inner sep=0.75pt]    {$x_{4}$};
% Text Node
\draw (353.5,148.4) node [anchor=north west][inner sep=0.75pt]    {$x_{3}$};
% Text Node
\draw (353.5,105.4) node [anchor=north west][inner sep=0.75pt]    {$x_{2}$};
% Text Node
\draw (353.5,62.4) node [anchor=north west][inner sep=0.75pt]    {$x_{1}$};
% Text Node
\draw (459.5,62.4) node [anchor=north west][inner sep=0.75pt]    {$y_{1}$};
% Text Node
\draw (459.5,105.4) node [anchor=north west][inner sep=0.75pt]    {$y_{2}$};
% Text Node
\draw (459.5,148.4) node [anchor=north west][inner sep=0.75pt]    {$y_{3}$};
% Text Node
\draw (160,213.4) node [anchor=north west][inner sep=0.75pt]    {$( a)$};
% Text Node
\draw (404,213.4) node [anchor=north west][inner sep=0.75pt]    {$( b)$};

\end{tikzpicture}}%
	\hfil
    }
    \caption{The graph in (a) is $3$-connected, and the graph in (b) is internally $3$-connected.}\label{fig_biregular graph}
\end{figure}
\end{center}
     
     For the induction step, suppose $|X|\geq 5$ and that the statement holds for every bipartite graph $(X',Y')$ with $|X'|<|X|$. Suppose that $G$ does not have a subgraph satisfying (\romannum{1}) or (\romannum{2}). We first establish the following.

\begin{sublemma}\label{sublem:degree>=3}
If $X'$ is a subset of $X$ such that $|X-X'|\geq 3$, then $|N(X')|\geq 2|X'|+1$. In particular, $d_G(x)\geq 3$ for all $x\in X$.
\end{sublemma}

    Suppose that there is such a subset $X'$ of $X$ with $|N(X')|\leq 2|X'|$. Then the graph $G-N[X']$ is a semi-cubic bipartite graph with bipartition $(X-X',Y-N(X'))$, where
    \[|X-X'|\geq 3\quad  \text{and}\quad |Y-N(X')|\geq 2|X-X'|-4.\]
    By the inductive hypothesis, $G-N[X']$ contains a subgraph satisfying (\romannum{1}) or (\romannum{2}). Since this subgraph is also a subgraph of $G$, we obtain a contradiction. Moreover, when $X'=\{x\}$ for some $x\in X$, we deduce that $|N(x)|\geq 3$. Therefore,~\ref{sublem:degree>=3} holds.\\

Next we show that

\begin{sublemma}\label{sublem:G is 3-con}
    $G$ is $3$-connected.
\end{sublemma}

Suppose that $G$ is not $3$-connected and $\{A,B\}$ is a $2$-separation of $G$ such that $V(A)\cap V(B)\cap Y=S$. Let $X_A = X \cap V(A)$ and $X_B = X \cap V(B)$. Define $Y_A=\{y\in Y-S:N_G(y)\subseteq X_A\}$ and $Y_B=\{y\in Y-S:N_G(y)\subseteq X_B\}$. By Lemma~\ref{lem:bipartite general lemma}, we may assume that $A-S$, which equals $G[X_A\cup Y_A]$, is a bipartite graph with bipartition $(X_A,Y_A)$ such that $|X_A|<|X|$ and $ |Y_A| \ge 2|X_A| -4$. Moreover, $d_G(y)=d_{A-S}(y)$ for all $y\in Y_A$. Let $v$ be a vertex in $V(A)-V(B)$. If $v\in Y$, then, since $G$ is semi-cubic, $d_G(v)=3$. Otherwise, if $v\in X$, by~\ref{sublem:degree>=3}, $d_G(v)\geq 3$. Since $d_G(v)\geq 3$ in each case, $V(A)-V(B)$ contains a neighbor of $v$ and hence $V(A)-V(B)$ contains a pair of adjacent vertices $(x,y)\in X\times Y$. Because $N_G(y)\subseteq X_A$, it is clear that $|X_A|\geq 3$. Thus, by the inductive hypothesis, $A-S$ contains a subgraph satisfying (\romannum{1}) or (\romannum{2}). Hence so does $G$, a contradiction. Therefore,~\ref{sublem:G is 3-con} holds.\\

We may now assume that, for all vertices $y \in Y$, the graph $G - y$ is not $3$-connected, otherwise, $G$ itself would be a subgraph satisfying (\romannum{2}). Let $u$ be an arbitrary vertex in $Y$. Because $|V(G-u)|\geq 4$, we know there is a $2$-separation $\{C,D\}$ of $G-u$. Next we show that

\begin{sublemma}\label{sublem: less than three vertices in X}
    $\min\{|V(C)\cap X|,|V(D)\cap X|\}<3$.
\end{sublemma}

Suppose that $\min\{|V(C)\cap X|,|V(D)\cap X|\}\geq3$. It is clear that $G-u$ is a semi-cubic bipartite graph with bipartition $(X,Y-\{u\})$, so $|Y-\{u\}|\geq 2|X|-5$. Let $T=(Y-\{u\})\cap V(C)\cap V(D)$. Let $X_C = X \cap V(C)$ and $X_D = X \cap V(D)$. Define $Y_C=\{y\in Y-\{u\}-T:N_G(y)\subseteq X_C\}$ and $Y_D=\{y\in Y-\{u\}-T:N_G(y)\subseteq X_D\}$. By Lemma~\ref{lem:bipartite general lemma}, we may assume that $C-T$ is a bipartite graph $G[X_C\cup Y_C]$ with bipartition $(X_C,Y_C)$ such that $|X_C|<|X|$ and $ |Y_C| \ge 2|X_C| -4$. Because $|X_C|\geq 3$, by the inductive hypothesis, we know that $C-T$ contains a subgraph satisfying the conditions described in (\romannum{1}) or (\romannum{2}). Hence so does $G$, a contradiction. Therefore,~\ref{sublem: less than three vertices in X} holds.\\

Without loss of generality, we may assume that $|V(C)\cap X|<3$. Next we show that

\begin{sublemma}\label{sublem:iso to P3}
    $C$ is isomorphic to $P_3$.
\end{sublemma}

We may assume that $(V(C)-V(D))\cap Y=\emptyset$; otherwise, if $p\in (V(C)-V(D))\cap Y$, then $N(p)\subseteq V(C)$ and hence $|V(C)\cap X|\geq 3$, a contradiction. Therefore, $V(C)-V(D)\subseteq X$. For all $x\in V(C)-V(D)$, by~\ref{sublem:degree>=3}, $d_G(x)\geq 3$, so $N(x)=\{c,d,u\}$. If $x_1,x_2$ are two distinct vertices in $V(C)-V(D)$, then $|X-\{x_1,x_2\}|\geq 3$ and $|N(\{x_1,x_2\})|=3 < 2|\{x_1,x_2\}|+1$, contradicting~\ref{sublem:degree>=3}. Thus $|V(C)-V(D)|<2$. Moreover, since $\{C,D\}$ is a $2$-separation, we know $V(C)-V(D)\neq \emptyset$. Hence $V(C)-V(D)=\{x\}$ for some $x\in X$. Because $N(x)=\{c,d,u\}$ and $G-u$ is bipartite, $c$ and $d$ are not adjacent. Thus $C$ is isomorphic to $P_3$, so~\ref{sublem:iso to P3} holds.\\

By~\ref{sublem:G is 3-con} and~\ref{sublem:iso to P3}, we know that $G$ is $3$-connected and that $G - u$ is internally $3$-connected. Therefore, $G$ itself satisfies~(\romannum{2}), a contradiction. This completes the proof of the lemma.
\end{proof}

\begin{lemma}\label{lem:y>=2x-3}
    Let $G$ be a bipartite graph with bipartition $(X,Y)$ such that $d_G(y)\geq 3$ for all $y\in Y$. If $|X|\geq 3$ and $|Y|\geq 2|X|-3$, then $Y$ has a subset $Z$ such that the induced subgraph $G[N[Z]]$ is $3$-connected.
\end{lemma}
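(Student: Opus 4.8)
The plan is to argue by induction on $|X|$, using Lemma~\ref{lem:2n-4} as the engine and exploiting the surplus vertex of $Y$ (the gap between the hypothesis $|Y|\ge 2|X|-3$ here and the weaker $|Y|\ge 2|X|-4$ of Lemma~\ref{lem:2n-4}) to upgrade a merely $2$-connected witness into a $3$-connected one. For the base case $|X|=3$, the condition $d_G(y)\ge 3$ forces every $y\in Y$ to be adjacent to all three vertices of $X$, so $G\cong K_{3,|Y|}$ with $|Y|\ge 2\cdot 3-3=3$; taking $Z=Y$ gives $G[N[Z]]=G\cong K_{3,|Y|}$, which is $3$-connected.

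For the inductive step I would first pass to a spanning semi-cubic subgraph $G_0$, deleting edges until $d_{G_0}(y)=3$ for every $y\in Y$, and let $X_0\subseteq X$ be the vertices retaining a neighbor. Since some $y$ exists and keeps three neighbors, $|X_0|\ge 3$, while $|X_0|\le |X|$ gives $|Y|\ge 2|X|-3\ge 2|X_0|-3$, so $G_0$ still meets the hypotheses, and I apply Lemma~\ref{lem:2n-4} to it. If Lemma~\ref{lem:2n-4} returns a $3$-connected subgraph $H$ with $h\in V(H)\cap Y$ for which $H-h$ is internally $3$-connected, I set $Z=V(H)\cap Y$. Because $G_0$ is semi-cubic, every $z\in Z$ has $d_{G_0}(z)=3=d_H(z)$, so $N_{G_0}(z)=N_H(z)\subseteq V(H)\cap X$; it follows that $N_{G_0}[Z]=V(H)$ and $G_0[N_{G_0}[Z]]=H$, which is $3$-connected. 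If instead Lemma~\ref{lem:2n-4} returns only a $K_{3,2}$-subgraph, I would delete its three $X$-vertices together with their neighborhood and apply the induction hypothesis to the smaller instance; the reason for assuming $|Y|\ge 2|X|-3$ rather than $2|X|-4$ is precisely to keep the reduced graph within the hypothesis, so that the $K_{3,2}$ itself (which is only $2$-connected) is never used as the final witness.

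The hard part is passing from the $3$-connectivity of $G_0[N_{G_0}[Z]]$ back to that of $G[N_G[Z]]$ in the original graph, where a vertex $y\in Z$ may have $d_G(y)>3$. Such a $y$ has $G$-neighbors outside $V(H)$, and these extra $X$-vertices are forced into $N_G[Z]$; each of them must acquire at least three neighbors inside $Z$, for otherwise $G[N_G[Z]]$ contains a vertex of degree at most two and cannot be $3$-connected. The mechanism for controlling this is the auxiliary fact that adjoining to a $3$-connected graph a new vertex of degree at least three preserves $3$-connectivity, so it suffices to enlarge $Z$ until every $X$-vertex of $N_G(Z)$ has at least three neighbors in $Z$; here the internally $3$-connected graph $H-h$ furnished by Lemma~\ref{lem:2n-4} is exactly the gadget that permits adding $Y$-vertices back one at a time while maintaining internal $3$-connectivity before closing up to a $3$-connected $G[N[Z]]$. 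Verifying that this enlargement terminates without exhausting the budget $|Y|\ge 2|X|-3$ is, I expect, the principal technical obstacle, and it is where Lemma~\ref{lem:bipartite general lemma} and the neighborhood-expansion estimate $|N(X')|\ge 2|X'|+1$ appearing in the proof of Lemma~\ref{lem:2n-4} would be reused.
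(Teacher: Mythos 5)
Your base case and your extraction of $Z=V(H)\cap Y$ in case~(ii) are fine as far as they go (within $G_0$ one indeed gets $G_0[N_{G_0}[Z]]=H$), but the proposal has two genuine gaps, and the second one you have only flagged, not filled. First, the $K_{3,2}$ case: when Lemma~\ref{lem:2n-4} returns only a $K_{3,2}$-subgraph with $X$-vertices $\{x_1,x_2,x_3\}$, you propose to delete $N[\{x_1,x_2,x_3\}]$ and recurse. But the induction hypothesis then requires $|Y''|\ge 2|X''|-3=2|X|-9$, i.e.\ $|N(\{x_1,x_2,x_3\})|\le 6$, and nothing bounds the degrees of $X$-vertices (being semi-cubic only constrains the $Y$-side). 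In the proof of Lemma~\ref{lem:2n-4} the analogous deletion of $N[X']$ is performed only under the hypothesis $|N(X')|\le 2|X'|$, which is precisely what you cannot guarantee for these three specific vertices. Second, and more fundamentally, the transfer from $G_0$ back to $G$ does not work. The conclusion of Lemma~\ref{lem:y>=2x-3} concerns the closed neighborhood and the induced subgraph in $G$; once you have deleted edges at a vertex $z\in Z$ with $d_G(z)>3$, the set $N_G[Z]$ strictly contains $V(H)$, and each extra vertex $x'\in N_G(Z)\setminus V(H)$ sits inside $G[N_G[Z]]$ with whatever degree it happens to have there, possibly $1$ or $2$, so $G[N_G[Z]]$ need not be $3$-connected. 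Your repair (enlarge $Z$ until every vertex of $N_G(Z)\cap X$ has three neighbors in $Z$) comes with no termination or correctness argument: every $Y$-vertex you add drags in its own $G$-neighborhood, which can create new low-degree $X$-vertices, and the only state where the process is guaranteed to stop is $Z=Y$, where $G[N_G[Y]]$ is essentially $G$ itself and need not be $3$-connected --- that $G$ can fail to be $3$-connected is the entire difficulty of the lemma. The fact that attaching a new vertex of degree at least three preserves $3$-connectivity does not apply here, because the vertices forced into $N_G[Z]$ are exactly the ones whose degrees you cannot control.

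For contrast, the paper never reduces to the semi-cubic setting and never invokes Lemma~\ref{lem:2n-4}: it inducts on $|X|$ directly in $G$, so closed neighborhoods never change meaning. Assuming no good $Z$ exists, $G$ is not $3$-connected and so has a $2$-separation $\{A,B\}$. One first shows $d_G(x)\ge 3$ for all $x\in X$ (if $d_G(x)\le 2$, delete $x$ and $N(x)$; this costs at most two vertices of $Y$, so $|Y'|\ge 2|X'|-3$ still holds and induction applies), then that each side of the separation contains at least three $X$-vertices, and finally applies Lemma~\ref{lem:bipartite general lemma} with $k=3$ (note $\lfloor(3+4)/2\rfloor=3$) to obtain a strictly smaller instance still satisfying $|Y_A|\ge 2|X_A|-3$ in which the $G$-degrees of the surviving $Y$-vertices are preserved; induction then yields the contradiction. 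If you want to salvage your plan, this is the route to take: the surplus you identified is spent on maintaining the hypothesis across the $2$-separation, not on upgrading a semi-cubic witness.
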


\begin{proof}
    We argue by induction on $|X|$. When $|X|=3$, by assumption, it follows that $G$ is a complete bipartite graph $K_{3,|Y|}$ and $|Y|\geq 3$. Let $Z$ be a $3$-element subset of $Y$. Clearly $G[N[Z]]$ is $K_{3,3}$ and so is $3$-connected. Therefore, the base case holds.

    For the inductive step, suppose that $|X|=n\geq 4$ and that the result holds for $|X|<n$. Suppose that $Y$ does not have a subset $Z$ such that $G[N[Z]]$ is $3$-connected. It is clear that $G$ is not $3$-connected, otherwise $G[N[Y]]$ is $3$-connected, a contradiction. Therefore, $G$ has a $2$-separation $\{A,B\}$ with $V(A)\cap V(B)=\{a,b\}$. First we show the following.

    \begin{sublemma}\label{sublem:degree>=3 (2)}
        $d_G(x)\geq 3$ for all $x\in X$.
    \end{sublemma}

    Suppose there is a vertex $x\in X$ such that $d_G(x)<3$. Let $X'=X-\{x\}$ and $Y'=Y-N(x)$. It is clear that $G[X'\cup Y']$ is a bipartite graph with bipartition $(X',Y')$ such that $d_G(y)\geq 3$ for all $y\in Y'$. Moreover, $|X'|\geq 3$ and $|Y'|\geq |Y|-2\geq 2|X|-5= 2|X'|-3$. By the inductive hypothesis, there is a set $Z\subseteq Y'$ such that $G[N[Z]]$ is $3$-connected, a contradiction. Therefore,~\ref{sublem:degree>=3 (2)} holds.\\

    Next we show that
    \begin{sublemma}\label{sublem: intersect X >=3}
        $\min\{|V(A)\cap X|,|V(B)\cap X|\}\geq 3$.
    \end{sublemma}

    Let $w$ be a vertex in $V(A)-V(B)$. Because $d_A(w)=d_G(w)\geq 3$, the vertex $w$  has a neighbor $z$ in $V(A)-\{a,b\}$. Evidently, one of $w$ and $z$ lies in $Y$, and all the neighbors of that vertex are contained in $A$. Therefore $|V(A)\cap X|\geq 3$ and, by symmetry, $|V(B)\cap X|\geq 3$. Thus~\ref{sublem: intersect X >=3} holds.\\

    Let $X_A = X \cap V(A)$ and $X_B = X \cap V(B)$. Define $Y_A=\{y\in Y-\{a,b\}:N_G(y)\subseteq X_A\}$ and $Y_B=\{y\in Y-\{a,b\}:N_G(y)\subseteq X_B\}$. By Lemma~\ref{lem:bipartite general lemma}, we may assume that $A-(Y\cap\{a,b\})$ is a bipartite graph $G[X_A\cup Y_A]$ with bipartition $(X_A,Y_A)$ such that $|X_A|<|X|$ and $|Y_A|\geq 2|X_A|-3$. By~\ref{sublem: intersect X >=3}, $X_A$ has at least three vertices. By the inductive hypothesis, there is a set $Z$ contained in $Y_A$ such that $G[N[Z]]$ is $3$-connected, a contradiction.
\end{proof}

The following is an immediate consequence of the last lemma.

\begin{corollary}\label{cor:y>= 2x-2}
    Let $G$ be a bipartite graph with bipartition $(X,Y)$ such that $d_G(y)\geq 3$ for all $y\in Y$. If $|X|\geq 3$ and $|Y|\geq 2|X|-2$, then $Y$ has a proper subset $Z$ such that the induced subgraph $G[N[Z]]$ is $3$-connected.
\end{corollary}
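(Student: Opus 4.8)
The plan is to reduce immediately to Lemma~\ref{lem:y>=2x-3} by deleting a single vertex of $Y$. First I would fix an arbitrary vertex $y_0 \in Y$ and set $Y' = Y - \{y_0\}$. The key observation is that the induced subgraph $G' = G[X \cup Y']$ is again bipartite with bipartition $(X, Y')$, and because every vertex of $Y$ has all of its neighbors in $X$, deleting $y_0$ does not change the degree of any remaining $Y$-vertex; hence $d_{G'}(y) = d_G(y) \geq 3$ for all $y \in Y'$.

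Next I would verify the hypotheses of Lemma~\ref{lem:y>=2x-3} for $G'$: we have $|X| \geq 3$ by assumption, and $|Y'| = |Y| - 1 \geq (2|X| - 2) - 1 = 2|X| - 3$. Applying the lemma yields a subset $Z \subseteq Y'$ such that $G'[N_{G'}[Z]]$ is $3$-connected. Since $Z \subseteq Y'$ and the neighbors of $Z$ all lie in $X$, we have $N_{G'}[Z] = N_G[Z]$, and because $G'$ is an induced subgraph of $G$, it follows that $G'[N_{G'}[Z]] = G[N_G[Z]]$. Thus $G[N[Z]]$ is $3$-connected.

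Finally, since $Z \subseteq Y - \{y_0\}$, the vertex $y_0$ is not in $Z$, so $Z$ is a proper subset of $Y$, as required. There is no genuine obstacle here: the only points to confirm are that the degree hypothesis is preserved after deleting $y_0$ and that passing to the induced subgraph does not alter the relevant closed neighborhood, both of which are immediate from the bipartite structure.
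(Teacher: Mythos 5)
Your proof is correct and is essentially the paper's own argument: the paper states the corollary as an immediate consequence of Lemma~\ref{lem:y>=2x-3}, and your reduction---delete one vertex of $Y$, note degrees and $|X|$ are unaffected, check $|Y|-1\geq 2|X|-3$, and apply the lemma to get a $Z$ avoiding the deleted vertex, hence proper---is precisely the intended one-line deduction, with the closed-neighborhood identification $G'[N_{G'}[Z]]=G[N_G[Z]]$ verified carefully.
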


\section{The minimum number of degree-$3$ vertices}
Before presenting the proof of Theorem~\ref{thm:main}, we prove the following lemma. Let $G$ be a graph and let $H$ be a subgraph of $G$. A vertex $v$ of $G$ is {\it $H$-private} if $N[v]\subseteq V(H)$.

\begin{lemma}\label{lem:H-private}
    Let $G$ be a $3$-connected graph, and let $H$ be a $3$-connected subgraph of $G$. Suppose $v$ is $H$-private, and $\{A,B\}$ is a $3$-separation of $G$ such that $v \in V(A)\cap V(B)$. Then $\{(A\cap H)-v,(B\cap H)-v\}$ is a $2$-separation of $H-v$.
\end{lemma}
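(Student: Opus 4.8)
The plan is to verify directly the four requirements in the definition of a $2$-separation of $H-v$: that $(A\cap H)-v$ and $(B\cap H)-v$ are edge-disjoint, that their union is $H-v$, that their common vertex set has size exactly two, and that each has at least three vertices. Write $V(A)\cap V(B)=\{v,p,q\}$, and set $A'=V(A)-V(B)$ and $B'=V(B)-V(A)$; since $\min\{|V(A)|,|V(B)|\}\ge 4$, both $A'$ and $B'$ are nonempty. Edge-disjointness is immediate from the edge-disjointness of $A$ and $B$, and the union equals $H-v$ because $A\cup B=G$ forces $(A\cap H)\cup(B\cap H)=H$ at the level of both vertices and edges. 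The real content lies in the last two requirements, and both of them reduce to showing that $p$ and $q$ belong to $V(H)$.

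First I would locate neighbors of $v$ on each side of the separation. If $v$ had no neighbor in $A'$, then every vertex of $A'$ would have all of its neighbors inside $A'\cup\{p,q\}$, so $\{p,q\}$ would be a vertex cut of $G$ separating the nonempty set $A'$ from $\{v\}\cup B'$; this contradicts the $3$-connectivity of $G$. Hence $v$ has a neighbor $a\in A'$, and symmetrically a neighbor $b\in B'$. Because $v$ is $H$-private, we have $a,b\in V(H)$, so the sets $P=V(H)\cap A'$ and $Q=V(H)\cap B'$ are both nonempty.

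Next I would pin down $p,q\in V(H)$ using the $2$-connectivity of $H-v$, which follows from $H$ being $3$-connected. Since $H\subseteq G$, any path in $H-v$ from a vertex of $P$ to a vertex of $Q$ is a path in $G$ joining $A'$ to $B'$ and therefore meets the cut $\{v,p,q\}$; avoiding $v$, it must pass through $\{p,q\}\cap V(H)$. As $P$ and $Q$ are disjoint from $\{p,q\}$ and both nonempty, this shows $\{p,q\}\cap V(H)$ is a vertex cut of the $2$-connected graph $H-v$, so it has at least two vertices, forcing $p,q\in V(H)$.

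With $p,q\in V(H)$ in hand, the remaining bookkeeping is routine: the intersection $V((A\cap H)-v)\cap V((B\cap H)-v)$ equals $(\{v,p,q\}\cap V(H))-\{v\}=\{p,q\}$, of size two, while $V((A\cap H)-v)=P\cup\{p,q\}$ and $V((B\cap H)-v)=Q\cup\{p,q\}$ each have at least three vertices because $P$ and $Q$ are nonempty. I expect the middle step to be the main obstacle, namely ruling out the degenerate possibility that one of $p,q$ escapes $V(H)$ and shrinks the intersection below size two. The resolution is the two-pronged use of connectivity: the $3$-connectivity of $G$ forces neighbors of $v$ on both sides (hence nonempty $P,Q$), and the $2$-connectivity of $H-v$ forces the separating set $\{p,q\}\cap V(H)$ to be full.
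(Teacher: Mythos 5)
Your proof is correct and takes essentially the same route as the paper's: both arguments first use the $H$-privacy of $v$ together with the $3$-connectivity of $G$ to show that $H$ has vertices strictly inside both sides of the separation, and then use the connectivity of $H$ to force the remaining two cut vertices into $V(H)$. The only cosmetic difference is that the paper applies the $3$-connectivity of $H$ to the induced separation $\{A\cap H,\,B\cap H\}$ of $H$, whereas you pass to $H-v$ and apply its $2$-connectivity to the induced cut $\{p,q\}\cap V(H)$; the remaining bookkeeping is identical.
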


\begin{proof}
Let $V(A)\cap V(B)=\{v,a,b\}$. First, we show the following.
\begin{sublemma}\label{sublem:not contained in AB}
    $V(H)\not\subseteq V(A)$ and $V(H)\not\subseteq V(B)$.
\end{sublemma}

Suppose that $V(H)\subseteq V(A)$. Since $v$ is $H$-private, there is no vertex in $V(B)-V(A)$ that is adjacent to $v$. Therefore, $\{a,b\}$ is a vertex-cut of $G$, contradicting the fact that $G$ is $3$-connected. Thus $V(H)\not\subseteq V(A)$ and, by symmetry, ~\ref{sublem:not contained in AB} holds.

\begin{sublemma}\label{sublem:is 3-sep}
    $\{A\cap H,B\cap H\}$ is a $3$-separation of $H$.
\end{sublemma}

By~\ref{sublem:not contained in AB}, neither $(A\cap H)-(B\cap H)$ nor $(B\cap H)-(A\cap H)$ is empty. Therefore, $\{A\cap H,B\cap H\}$ is an $|A\cap B\cap H|$-separation of $H$. Since $H$ is $3$-connected, we have $3\leq|A\cap B\cap H|$. Because $A\cap B=\{v,a,b\}$, we see that $A\cap B\cap H=\{v,a,b\}$. Thus~\ref{sublem:is 3-sep} holds and the lemma follows immediately.
\end{proof}

Let $G$ be a graph with $v(G)$ vertices. Let $V_3(G)$ denote the set of degree-$3$ vertices of $G$, and let $V_+(G)$ denote the set of vertices of $G$ with degree at least four. We write $v_3(G)$ for $|V_3(G)|$ and $v_+(G)$ for $|V_+(G)|$. We now prove the following strengthening of Theorem~\ref{thm:main}.

\begin{theorem}
    If $G$ is both minimally $3$-connected and critically $3$-connected, then
    \[v_3(G)\geq \frac{v(G)+3}{2}.\]
\end{theorem}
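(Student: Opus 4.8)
The plan is to prove the equivalent inequality $v_3(G) \ge v_+(G) + 3$; since $v(G) = v_3(G) + v_+(G)$, this is exactly $v_3(G) \ge \tfrac{v(G)+3}{2}$. I would argue by induction on $v(G)$ and fix a minimal counterexample $G$, so that $v_3(G) \le v_+(G) + 2$. By Lemma~\ref{lem:Mader_induce_forest} the subgraph $F = G[V_+(G)]$ is a forest; let $c$ denote its number of components. If $G$ has an edge $e$ of order at least four (equivalently, $F$ has an edge), then Lemma~\ref{lem:Halin_two_deg3_vtx} rules out any degree-$3$ vertex adjacent to both ends of $e$, so $G/e$ is simple with $v_3(G/e) = v_3(G)$ and $v_+(G/e) = v_+(G) - 1$; arguing as in Lemma~\ref{lem:Halin_contract_ord4} and Lemma~\ref{lem:m3c and c3c}, $G/e$ remains minimally and critically $3$-connected. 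Minimality of $G$ then forces the tight violation $v_3(G) = v_+(G) + 2$, since $v_3(G) \le v_+(G) + 1$ would make $G/e$ a smaller counterexample.

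Next I would extract what I can from an edge count. Writing $e_{+3}$ and $e_{33}$ for the numbers of edges between $V_+(G)$ and $V_3(G)$ and inside $V_3(G)$ respectively, the forest $F$ has $v_+(G) - c$ edges, so $\sum_{v \in V_+(G)} d_G(v) \ge 4v_+(G)$ gives $e_{+3} \ge 2v_+(G) + 2c$, while $3v_3(G) = e_{+3} + 2e_{33}$. Hence $3v_3(G) \ge 2v_+(G) + 2c$. Whenever $F$ has many small components, precisely when $v_+(G) \le 2c - 9$, this already yields $v_3(G) \ge \tfrac{2v_+(G)+2c}{3} \ge v_+(G) + 3$, contradicting the choice of $G$. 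So the counterexample must lie in the complementary regime, in which $F$ carries large trees and $v_+(G)$ far exceeds $c$.

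The core of the argument is this complementary regime, where I would deploy the bipartite lemmas of Section~3. Contract $F$ to form $H = G/E(F)$, which by Lemma~\ref{lem:m3c and c3c} is minimally and critically $3$-connected and simple, with $V_3(H) = V_3(G)$ and with its set $X = V_+(H)$ of high-degree vertices forming a stable set of size $c$. Letting $Y$ be the degree-$3$ vertices of $H$ whose three neighbours all lie in $X$, the pair $(X,Y)$ spans a semi-cubic bipartite subgraph, and the tight identity $v_3(G) = v_+(G) + 2$ should push $|Y|$ up to $2|X| - O(1)$. I would then invoke Lemma~\ref{lem:2n-4} (or Corollary~\ref{cor:y>= 2x-2}) to obtain a $3$-connected subgraph $H'$ together with a degree-$3$ vertex $h \in V(H') \cap Y$ for which $H' - h$ is internally $3$-connected. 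Since $\deg_H(h) = 3$ while $H'$ is $3$-connected, $h$ has degree $3$ in $H'$, so $N_H[h] \subseteq V(H')$ and $h$ is $H'$-private. Criticality of $H$ then supplies a $3$-separation of $H$ through $h$, and Lemma~\ref{lem:H-private}, applied with ambient graph $H$ and subgraph $H'$, forces $H' - h$ to have a $2$-separation. As $H' - h$ is internally $3$-connected, one side of that separation must be a $P_3$, and I would play this rigid local structure against the semi-cubic degrees of $H'$ to reach the final contradiction.

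The step I expect to be the main obstacle is the bookkeeping that keeps the bipartite machinery applicable. Degree-$3$ vertices of $H$ that are adjacent to other degree-$3$ vertices fall outside the semi-cubic part $Y$, so I must bound their number and verify that the equality $v_3(G) = v_+(G) + 2$ genuinely delivers the size condition $|Y| \ge 2|X| - O(1)$ needed to trigger Lemma~\ref{lem:2n-4}; controlling these $Y$--$Y$ adjacencies, which the passage to $H$ does not eliminate, is the delicate part. The second difficulty is manufacturing a contradiction at all, since a minimally and critically $3$-connected graph may well contain proper $3$-connected subgraphs: the contradiction cannot come from the mere existence of $H'$, but only from the precise clash between the $P_3$-side of the $2$-separation guaranteed by Lemma~\ref{lem:H-private} and the degree constraints imposed by $3$-connectivity of $H'$.
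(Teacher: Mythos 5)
Your proposal follows essentially the same route as the paper's proof: contract the forest $F$ of high-degree vertices, restrict attention to the semi-cubic bipartite graph on $(V_+(G'),T_3)$, show that a counterexample pushes $|T_3|$ past the threshold of Lemma~\ref{lem:2n-4}, and then contradict critical $3$-connectivity via Lemma~\ref{lem:H-private}. Your counting does work: with $c=v_+(G')$ components of $F$, the inequalities $t_1+2t_2+3t_3\ge 2v_+(G)+2c$ and $v_3(G)\le v_+(G)+2$ give $t_3\ge 2t_0+t_1+2c-4\ge 2c-4$, which is exactly the hypothesis $|Y|\ge 2|X|-4$ of Lemma~\ref{lem:2n-4}; so your ``$2|X|-O(1)$'' does land on the right side of the threshold. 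The central gap is that you misstate the conclusion of Lemma~\ref{lem:2n-4}: it is a disjunction, and its first horn produces a $K_{3,2}$-subgraph, not a $3$-connected subgraph $H'$ with a vertex $h$ for which $H'-h$ is internally $3$-connected. You cannot dodge this by switching to Corollary~\ref{cor:y>= 2x-2}, whose hypothesis $|Y|\ge 2|X|-2$ is stronger than the $2|X|-4$ you have. The $K_{3,2}$ outcome is a genuine, separate piece of the paper's proof: there one shows that $\overline{J}=G'-\{a_1,a_2\}$ is $2$-connected and then refutes the $2$-separation of $G'-a_1$ supplied by criticality by exhibiting three internally disjoint paths between two branch vertices of the $K_{3,2}$. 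Nothing in your proposal covers this case, and your private-vertex machinery cannot, since a $K_{3,2}$ is not $3$-connected.

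Three further points. (a) In the horn you do treat, what you defer as ``playing the rigid local structure against the semi-cubic degrees'' is where the paper's real work happens: after Lemma~\ref{lem:H-private} and internal $3$-connectivity force a $P_3$ side with middle vertex $z$, one uses $z\in V_+(G')$ and $d_{G'}(z)\ge 4$ to find a neighbour $s\notin V(H')$, takes a $3$-fan from $s$ into $H'$ by Menger's Theorem, and builds a path from $z$ to the far side of the $3$-separation avoiding $\{h,p,q\}$, the desired contradiction; your sketch names the ingredients but not this argument, and, as you yourself note, the mere existence of $H'$ contradicts nothing. (b) Lemma~\ref{lem:2n-4} requires $|X|\ge 3$, so the cases $v_+(G')\in\{1,2\}$ need separate short arguments (as in the paper); your regime dichotomy does not help there, since for $c\le 2$ the condition $v_+(G)\le 2c-9$ never holds. (c) Your claim that contracting a single edge of order at least four preserves critical $3$-connectivity is not delivered by Lemma~\ref{lem:Halin_contract_ord4} together with Lemma~\ref{lem:m3c and c3c}: the proof of Lemma~\ref{lem:m3c and c3c} contracts all of $E(F)$ at once and exploits the fact that the high-degree vertices of $G/E(F)$ form a stable set, which fails after a single contraction, and the subcase in which the deleted vertex is the newly contracted vertex needs its own argument. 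This last issue is harmless only because the step is dispensable: as computed above, the counterexample inequality alone already yields $t_3\ge 2c-4$, so the ``tight violation'' is never needed.
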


\begin{proof}

Let $F$ be the subgraph of $G$ induced by $V_+(G)$, and let $G' = G / E(F)$. For each $i \in \{0,1,2,3\}$, define 
\[ T_i = \left\{ v \in V_3(G) : |N(v) \cap V_+(G)| = i \right\}, \quad \text{and let} \quad t_i = |T_i|. \]
Evidently, we have
\[ v_3(G) = t_0 + t_1 + t_2 + t_3.\tag{1}\label{eq:one} \]
Moreover, each vertex in $T_3$ has all of its neighbors in different components of $F$. Counting the number of edges between $V_+(G)$ and $V_3(G)$ in two different ways, we have

\[
\sum_{v \in V(F)} d_G(v) - 2|E(F)| = \sum_{i \in \{0,1,2,3\}} i \cdot t_i. \tag{2}\label{eq:two}
\]
By Lemma~\ref{lem:Mader_induce_forest}, $F$ is a forest. We note that $|V(F)|=v_+(G)$ and the number of components of $F$ is $v_+(G')$. Therefore, $|E(F)|=v_+(G)-v_+(G')$. Because $d_G(v)\geq 4$ for all $v\in V(F)$, we get the following inequality from~(\ref{eq:two}):
\[
2v_+(G)+2v_+(G') \leq \sum_{i \in \{0,1,2,3\}} i \cdot t_i. \tag{3}\label{eq:three}
\]

Now we focus on the structure of $G'$. By Lemma~\ref{lem:m3c and c3c}, $G'$ is both minimally and critically $3$-connected. Next we show the following.

\begin{sublemma}\label{sublem:t_3<=2n-5}
    If $v_+(G')\geq 3$, then $t_3\leq 2v_+(G')-5$.
\end{sublemma}

Suppose that $t_3\geq 2v_+(G')-4$. Let $G''=G'[V_+(G')\cup T_3]$. It is straightforward to see that $G''$ is a semi-cubic bipartite graph with bipartition $(V_+(G'),T_3)$. By Lemma~\ref{lem:2n-4}, we can break the remainder of the proof of~\ref{sublem:t_3<=2n-5} into the following two cases.
\begin{enumerate}
    \item[(\rom{1})] $G''$ has a $K_{3,2}$-subgraph $J$, and
    \item[(\rom{2})] $G''$ has a $3$-connected subgraph $H$ having a vertex $h\in V(H)\cap T_3$ such that $H-h$ is internally $3$-connected. 
\end{enumerate}

Assume that (\rom{1}) holds. Let $V(J)\cap T_3=\{a_1,a_2\}$ and $V(J)-\{a_1,a_2\}=\{b_1,b_2,b_3\}$. Let $\overline{J}= G'-\{a_1,a_2\}$. Observe that $\{J,\overline{J}\}$ is a $3$-separation of $G'$. Moreover,

\begin{sublemma}\label{sublem:barJ is 2-con}
    $\overline{J}$ is $2$-connected.
\end{sublemma}

To see this, suppose that $\{A,B\}$ is a $1$-separation of $\overline{J}$ such that $V(A)\cap V(B)=\{w\}$. If $\{b_1,b_2,b_3\}\subseteq V(A)$, then $\{A\cup J,B\}$ is a $1$-separation of the $3$-connected graph $G'$, a contradiction. Similarly, $\{b_1,b_2,b_3\}$ is not contained in $V(B)$. Without loss of generality, we may assume that $b_1\in V(A)-V(B)$, and $\{b_2,b_3\}\subseteq V(B)$. If $|V(A)|\geq 3$, then $\{A,J\cup B\}$ is a $2$-separation of $G'$, a contradiction. Thus $V(A)=\{w,b_1\}$ and $d_A(b_1)\leq 1$. However, because $b_1\in V_+(G')$,
\[4\leq d_{G'}(b_1)=d_J(b_1)+d_{\overline{J}}(b_1)=2+d_{\overline{J}}(b_1)=2+d_{A}(b_1),\]
so $d_{A}(b_1)\geq 2$, a contradiction. Therefore,~\ref{sublem:barJ is 2-con} holds.\\

Because $G'$ is critically $3$-connected, $G'-a_1$ is not $3$-connected and hence has a $2$-separation $\{C,D\}$ such that, up to relabeling, $b_1\in V(C)-V(D)$ and $b_2\in V(D)-V(C)$. However, $G'-a_1$ has at least three internally disjoint $b_1b_2$-paths (see Figure~\ref{fig_K_3,2}), a contradiction. Thus (\rom{1}) does not hold.

\begin{center}
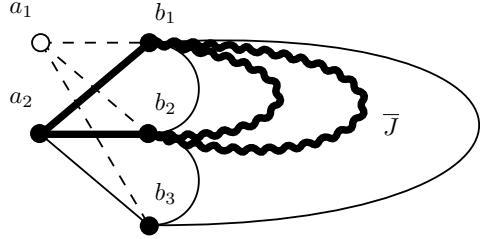
\begin{figure}[htb]
    \hbox to \hsize{
	\hfil
	\resizebox{8cm}{!}{\tikzset{every picture/.style={line width=0.75pt}} %set default line width to 0.75pt        

\begin{tikzpicture}[x=0.75pt,y=0.75pt,yscale=-1,xscale=1]
%uncomment if require: \path (0,300); %set diagram left start at 0, and has height of 300

%Shape: Ellipse [id:dp7521297265337491] 
\draw  [fill={rgb, 255:red, 0; green, 0; blue, 0 }  ,fill opacity=1 ] (207.82,68.6) .. controls (207.82,66.02) and (209.92,63.92) .. (212.5,63.92) .. controls (215.08,63.92) and (217.18,66.02) .. (217.18,68.6) .. controls (217.18,71.18) and (215.08,73.28) .. (212.5,73.28) .. controls (209.92,73.28) and (207.82,71.18) .. (207.82,68.6) -- cycle ;
%Shape: Ellipse [id:dp03440141478904013] 
\draw  [fill={rgb, 255:red, 0; green, 0; blue, 0 }  ,fill opacity=1 ] (207.82,168.2) .. controls (207.82,165.62) and (209.92,163.52) .. (212.5,163.52) .. controls (215.08,163.52) and (217.18,165.62) .. (217.18,168.2) .. controls (217.18,170.78) and (215.08,172.88) .. (212.5,172.88) .. controls (209.92,172.88) and (207.82,170.78) .. (207.82,168.2) -- cycle ;

%Shape: Ellipse [id:dp9324206844415541] 
\draw  [fill={rgb, 255:red, 0; green, 0; blue, 0 }  ,fill opacity=1 ] (207.22,118) .. controls (207.22,115.42) and (209.32,113.32) .. (211.9,113.32) .. controls (214.48,113.32) and (216.58,115.42) .. (216.58,118) .. controls (216.58,120.58) and (214.48,122.68) .. (211.9,122.68) .. controls (209.32,122.68) and (207.22,120.58) .. (207.22,118) -- cycle ;

%Shape: Ellipse [id:dp9298209289078178] 
\draw  [fill={rgb, 255:red, 0; green, 0; blue, 0 }  ,fill opacity=1 ] (148.22,118) .. controls (148.22,115.42) and (150.32,113.32) .. (152.9,113.32) .. controls (155.48,113.32) and (157.58,115.42) .. (157.58,118) .. controls (157.58,120.58) and (155.48,122.68) .. (152.9,122.68) .. controls (150.32,122.68) and (148.22,120.58) .. (148.22,118) -- cycle ;
%Straight Lines [id:da5931480849859291] 
\draw [fill={rgb, 255:red, 255; green, 255; blue, 255 }  ,fill opacity=1 ] [dash pattern={on 4.5pt off 4.5pt}]  (153.5,68.6) -- (212.5,68.6) ;
%Straight Lines [id:da38006885584234895] 
\draw [fill={rgb, 255:red, 255; green, 255; blue, 255 }  ,fill opacity=1 ] [dash pattern={on 4.5pt off 4.5pt}]  (153.5,68.6) -- (211.9,118) ;
%Straight Lines [id:da4895830928445154] 
\draw [fill={rgb, 255:red, 255; green, 255; blue, 255 }  ,fill opacity=1 ] [dash pattern={on 4.5pt off 4.5pt}]  (153.5,68.6) -- (212.5,168.2) ;
%Straight Lines [id:da036952212083926184] 
\draw [line width=3]    (153.5,118.4) -- (212.5,118.4) ;
%Straight Lines [id:da5839614462553034] 
\draw [line width=3]    (153.5,118.4) -- (212.5,68.6) ;
%Straight Lines [id:da7704238374199343] 
\draw    (153.5,118.4) -- (212.5,168.2) ;
%Curve Lines [id:da7779908002930676] 
\draw    (212.5,68.6) .. controls (249,70) and (248,119) .. (211.9,118) ;
%Curve Lines [id:da16684709947767173] 
\draw    (212.5,118.4) .. controls (249,119.8) and (248,168.8) .. (211.9,167.8) ;
%Curve Lines [id:da1921827088485325] 
\draw    (212.5,68.6) .. controls (452,54) and (451,170) .. (211.9,167.8) ;
%Shape: Ellipse [id:dp5299430788697856] 
\draw  [fill={rgb, 255:red, 255; green, 255; blue, 255 }  ,fill opacity=1 ] (148.82,68.6) .. controls (148.82,66.02) and (150.92,63.92) .. (153.5,63.92) .. controls (156.08,63.92) and (158.18,66.02) .. (158.18,68.6) .. controls (158.18,71.18) and (156.08,73.28) .. (153.5,73.28) .. controls (150.92,73.28) and (148.82,71.18) .. (148.82,68.6) -- cycle ;
%Curve Lines [id:da3955133081197785] 
\draw [line width=3]    (212.5,68.6) .. controls (215.01,67.24) and (216.72,67.46) .. (217.63,69.27) .. controls (219.59,71.23) and (221.23,71.46) .. (222.54,69.97) .. controls (224.91,68.66) and (226.99,68.99) .. (228.76,70.97) .. controls (229.47,72.78) and (230.94,73.05) .. (233.17,71.76) .. controls (235.38,70.49) and (236.78,70.76) .. (237.38,72.58) .. controls (238.81,74.59) and (240.57,74.97) .. (242.66,73.73) .. controls (244.73,72.5) and (246.37,72.9) .. (247.58,74.92) .. controls (248.7,76.94) and (250.22,77.35) .. (252.15,76.16) .. controls (254.08,74.99) and (255.82,75.53) .. (257.38,77.78) .. controls (258.12,79.79) and (259.69,80.35) .. (262.08,79.45) .. controls (263.86,78.36) and (265.25,78.94) .. (266.26,81.18) .. controls (267.04,83.38) and (268.49,84.09) .. (270.62,83.31) .. controls (273.17,82.86) and (274.58,83.7) .. (274.83,85.85) .. controls (275.12,88.14) and (276.47,89.25) .. (278.9,89.2) .. controls (281.33,89.44) and (282.28,90.69) .. (281.76,92.95) .. controls (280.76,94.9) and (281.17,96.51) .. (283,97.8) .. controls (284.33,99.98) and (283.89,101.54) .. (281.69,102.48) .. controls (279.46,102.81) and (278.36,104.16) .. (278.4,106.53) .. controls (278.13,108.92) and (276.76,109.95) .. (274.28,109.63) .. controls (272.17,108.94) and (270.79,109.7) .. (270.16,111.89) .. controls (269.27,114.12) and (267.67,114.8) .. (265.37,113.92) .. controls (263.53,112.81) and (261.96,113.33) .. (260.67,115.48) .. controls (259.66,117.49) and (257.95,117.94) .. (255.53,116.82) .. controls (253.65,115.51) and (252.07,115.83) .. (250.8,117.77) .. controls (249.41,119.68) and (247.74,119.93) .. (245.81,118.52) .. controls (243.9,117.07) and (242.16,117.25) .. (240.57,119.07) .. controls (239.48,120.82) and (237.97,120.92) .. (236.03,119.37) .. controls (234.12,117.77) and (232.56,117.81) .. (231.34,119.5) .. controls (229.41,121.17) and (227.49,121.14) .. (225.56,119.43) .. controls (224.33,117.72) and (222.68,117.63) .. (220.61,119.18) .. controls (219.14,120.75) and (217.46,120.6) .. (215.57,118.75) -- (212.5,118.4) ;
%Curve Lines [id:da10105602577229089] 
\draw [line width=3]    (212.5,68.6) .. controls (214.39,66.96) and (216.24,66.99) .. (218.05,68.7) .. controls (219.82,70.41) and (221.62,70.45) .. (223.45,68.84) .. controls (225.27,67.23) and (227.02,67.3) .. (228.69,69.03) .. controls (230.32,70.76) and (232.02,70.84) .. (233.79,69.26) .. controls (235.54,67.68) and (237.19,67.77) .. (238.73,69.53) .. controls (240.24,71.3) and (241.84,71.4) .. (243.53,69.85) .. controls (245.22,68.3) and (246.78,68.42) .. (248.19,70.21) .. controls (249.56,72) and (251.06,72.13) .. (252.7,70.6) .. controls (255.79,69.23) and (257.95,69.46) .. (259.19,71.27) .. controls (260.4,73.08) and (261.78,73.25) .. (263.35,71.76) .. controls (264.92,70.28) and (266.25,70.46) .. (267.36,72.29) .. controls (269.69,74.31) and (271.61,74.59) .. (273.12,73.14) .. controls (274.63,71.7) and (276.44,72.01) .. (278.57,74.07) .. controls (279.42,75.92) and (281.14,76.25) .. (283.71,75.06) .. controls (285.16,73.66) and (286.78,74.01) .. (288.56,76.12) .. controls (289.2,77.99) and (290.71,78.36) .. (293.1,77.24) .. controls (294.51,75.89) and (295.93,76.29) .. (297.36,78.42) .. controls (298.64,80.55) and (300.38,81.1) .. (302.58,80.07) .. controls (304.77,79.08) and (306.35,79.66) .. (307.3,81.81) .. controls (308.09,83.94) and (309.5,84.55) .. (311.53,83.63) .. controls (313.57,82.76) and (315.11,83.54) .. (316.14,85.99) .. controls (316.35,88.05) and (317.64,88.86) .. (320.01,88.43) .. controls (322.38,88.14) and (323.62,89.14) .. (323.71,91.45) .. controls (323.69,93.88) and (324.71,95.08) .. (326.76,95.05) .. controls (329.23,96.16) and (329.9,97.72) .. (328.77,99.73) .. controls (327.26,101.4) and (327.2,103.12) .. (328.61,104.87) .. controls (329.62,106.92) and (328.89,108.4) .. (326.42,109.33) .. controls (324.16,109.46) and (323,110.71) .. (322.95,113.06) .. controls (322.84,115.29) and (321.46,116.3) .. (318.8,116.1) .. controls (316.61,115.45) and (315.16,116.24) .. (314.45,118.48) .. controls (313.48,120.75) and (312.1,121.36) .. (310.3,120.29) .. controls (307.95,119.38) and (306.08,120.03) .. (304.71,122.23) .. controls (303.82,124.22) and (302.47,124.6) .. (300.64,123.36) .. controls (298.13,122.23) and (296.3,122.64) .. (295.16,124.57) .. controls (293.91,126.5) and (292.35,126.77) .. (290.49,125.38) .. controls (288.65,123.95) and (287.01,124.17) .. (285.57,126.02) .. controls (284.02,127.85) and (282.3,128.01) .. (280.41,126.5) .. controls (278.54,124.95) and (276.74,125.04) .. (275.02,126.79) .. controls (274.15,128.49) and (272.75,128.52) .. (270.83,126.89) .. controls (268.94,125.23) and (267.02,125.22) .. (265.07,126.86) .. controls (264.04,128.49) and (262.55,128.44) .. (260.61,126.69) .. controls (258.7,124.92) and (257.18,124.82) .. (256.05,126.41) .. controls (253.83,127.9) and (251.74,127.72) .. (249.79,125.85) .. controls (248.92,124.08) and (247.32,123.88) .. (244.99,125.27) .. controls (243.68,126.78) and (242.04,126.54) .. (240.09,124.57) .. controls (239.26,122.74) and (237.59,122.46) .. (235.09,123.72) .. controls (233.66,125.15) and (231.96,124.83) .. (230.01,122.74) .. controls (229.22,120.87) and (227.5,120.5) .. (224.85,121.61) .. controls (223.3,122.96) and (222.14,122.69) .. (221.36,120.78) .. controls (219.44,118.57) and (217.68,118.11) .. (216.07,119.4) -- (212.5,118.4) ;

% Text Node
\draw (214,45.4) node [anchor=north west][inner sep=0.75pt]    {$b_{1}$};
% Text Node
\draw (214,94.4) node [anchor=north west][inner sep=0.75pt]    {$b_{2}$};
% Text Node
\draw (214,143.4) node [anchor=north west][inner sep=0.75pt]    {$b_{3}$};
% Text Node
\draw (135,45.4) node [anchor=north west][inner sep=0.75pt]    {$a_{1}$};
% Text Node
\draw (135,94.4) node [anchor=north west][inner sep=0.75pt]    {$a_{2}$};
% Text Node
\draw (338,105.4) node [anchor=north west][inner sep=0.75pt]    {$\overline{J}$};

\end{tikzpicture}}%
	\hfil
    }
    \caption{There are at least three internally disjoint $b_1b_2$-paths in $G'-a_1$, as indicated in bold.}\label{fig_K_3,2}
\end{figure}
\end{center}

We now know that (\rom{2}) holds, that is, $G''$ has a $3$-connected subgraph $H$ having a vertex $h\in V(H)\cap T_3$ such that $H-h$ is internally $3$-connected. Because $G'$ is critically $3$-connected, we know $G'$ has a $3$-separation $\{P,Q\}$ such that $h\in V(P)\cap V(Q)$. Let $V(P)\cap V(Q)=\{h,p,q\}$. Let $P'=P\cap H$ and $Q'=Q\cap H$. Since $h$ is $H$-private and $H$ is $3$-connected, by Lemma~\ref{lem:H-private}, we know $\{P'-h,Q'-h\}$ is a $2$-separation of $H-h$. Because $H-h$ is internally $3$-connected, up to relabeling, we may assume that $P'-h$ is isomorphic to $P_3$. Let $z$ be the degree-$2$ vertex in $P'-h$. Clearly, $z$ is adjacent to $h$ in $H$. Since $H$ is a bipartite graph, $P'$ is isomorphic to $K_{1,3}$. Moreover, since $h\in T_3$, we see that $z\in V_+(G')$ and $\{h,p,q\}\subseteq T_3$. Also, $h,p$, and $q$ are $H$-private as $H$ is $3$-connected. Because $d_{G'}(z)\geq 4$, there is a vertex $s$ that is adjacent to $z$ in $G'$ and $s\notin \{h,p,q\}$. Clearly, $s\notin V(H)$. By Menger's Theorem, there are at least three internally disjoint paths from $s$ to three different vertices in $H$. Let $L$ be one of these path joining $s$ with $t\in V(H)$ such that $z\notin V(L)$. Since $h,p$, and $q$ are $H$-private, $t\in V(Q')-V(P')$. Therefore, $zs\cup L$ is a path in $G'-\{h,p,q\}$ that joins $z\in V(P)-V(Q)$ with a vertex $t\in V(Q)-V(P)$. This contradicts the fact that $\{P,Q\}$ is a $3$-separation of $G'$ (see Figure~\ref{fig_second}). Therefore, (\rom{2}) does not hold, and this completes the proof of~\ref{sublem:t_3<=2n-5}.

\begin{center}
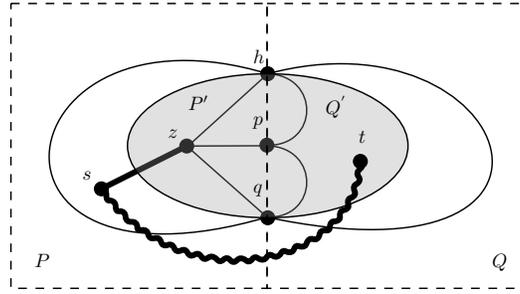
\begin{figure}[htb]
    \hbox to \hsize{
	\hfil
	\resizebox{9cm}{!}{\tikzset{every picture/.style={line width=0.75pt}} %set default line width to 0.75pt        

\begin{tikzpicture}[x=0.75pt,y=0.75pt,yscale=-1,xscale=1]
%uncomment if require: \path (0,300); %set diagram left start at 0, and has height of 300

%Shape: Ellipse [id:dp24929444565024605] 
\draw  [fill={rgb, 255:red, 0; green, 0; blue, 0 }  ,fill opacity=1 ] (227.82,88.6) .. controls (227.82,86.02) and (229.92,83.92) .. (232.5,83.92) .. controls (235.08,83.92) and (237.18,86.02) .. (237.18,88.6) .. controls (237.18,91.18) and (235.08,93.28) .. (232.5,93.28) .. controls (229.92,93.28) and (227.82,91.18) .. (227.82,88.6) -- cycle ;
%Shape: Ellipse [id:dp4483834240739991] 
\draw  [fill={rgb, 255:red, 0; green, 0; blue, 0 }  ,fill opacity=1 ] (227.82,188.2) .. controls (227.82,185.62) and (229.92,183.52) .. (232.5,183.52) .. controls (235.08,183.52) and (237.18,185.62) .. (237.18,188.2) .. controls (237.18,190.78) and (235.08,192.88) .. (232.5,192.88) .. controls (229.92,192.88) and (227.82,190.78) .. (227.82,188.2) -- cycle ;

%Shape: Ellipse [id:dp5987139098220826] 
\draw  [fill={rgb, 255:red, 0; green, 0; blue, 0 }  ,fill opacity=1 ] (227.22,138) .. controls (227.22,135.42) and (229.32,133.32) .. (231.9,133.32) .. controls (234.48,133.32) and (236.58,135.42) .. (236.58,138) .. controls (236.58,140.58) and (234.48,142.68) .. (231.9,142.68) .. controls (229.32,142.68) and (227.22,140.58) .. (227.22,138) -- cycle ;

%Curve Lines [id:da5384442735380698] 
\draw    (232.5,88.6) .. controls (269,90) and (268,139) .. (231.9,138) ;
%Curve Lines [id:da556831205379578] 
\draw    (232.5,138.4) .. controls (269,139.8) and (268,188.8) .. (231.9,187.8) ;
%Shape: Ellipse [id:dp6676959060895926] 
\draw  [fill={rgb, 255:red, 0; green, 0; blue, 0 }  ,fill opacity=1 ] (171.82,138.6) .. controls (171.82,136.02) and (173.92,133.92) .. (176.5,133.92) .. controls (179.08,133.92) and (181.18,136.02) .. (181.18,138.6) .. controls (181.18,141.18) and (179.08,143.28) .. (176.5,143.28) .. controls (173.92,143.28) and (171.82,141.18) .. (171.82,138.6) -- cycle ;
%Shape: Ellipse [id:dp2227424800252179] 
\draw  [fill={rgb, 255:red, 0; green, 0; blue, 0 }  ,fill opacity=1 ] (112.82,168.2) .. controls (112.82,165.62) and (114.92,163.52) .. (117.5,163.52) .. controls (120.08,163.52) and (122.18,165.62) .. (122.18,168.2) .. controls (122.18,170.78) and (120.08,172.88) .. (117.5,172.88) .. controls (114.92,172.88) and (112.82,170.78) .. (112.82,168.2) -- cycle ;
%Curve Lines [id:da27152191254025926] 
\draw    (232.5,88.6) .. controls (403,43) and (473,239) .. (231.9,187.8) ;
%Curve Lines [id:da2290680897938535] 
\draw    (232.5,88.6) .. controls (48,33) and (15,251) .. (231.9,187.8) ;
%Straight Lines [id:da3134377021546426] 
\draw    (177.82,138.2) -- (232.5,88.6) ;
%Straight Lines [id:da07207140336269535] 
\draw    (176.5,138.6) -- (232.5,138.4) ;
%Straight Lines [id:da5378168608626147] 
\draw    (176.5,138.6) -- (232.5,188.2) ;
%Straight Lines [id:da13975931848325962] 
\draw [line width=3]    (117.5,168.2) -- (176.5,138.6) ;
%Shape: Ellipse [id:dp4584731159471678] 
\draw  [fill={rgb, 255:red, 155; green, 155; blue, 155 }  ,fill opacity=0.3 ] (135.57,138.4) .. controls (135.57,110.9) and (178.97,88.6) .. (232.5,88.6) .. controls (286.03,88.6) and (329.43,110.9) .. (329.43,138.4) .. controls (329.43,165.9) and (286.03,188.2) .. (232.5,188.2) .. controls (178.97,188.2) and (135.57,165.9) .. (135.57,138.4) -- cycle ;
%Shape: Ellipse [id:dp4909692415136041] 
\draw  [fill={rgb, 255:red, 0; green, 0; blue, 0 }  ,fill opacity=1 ] (291.82,149.2) .. controls (291.82,146.62) and (293.92,144.52) .. (296.5,144.52) .. controls (299.08,144.52) and (301.18,146.62) .. (301.18,149.2) .. controls (301.18,151.78) and (299.08,153.88) .. (296.5,153.88) .. controls (293.92,153.88) and (291.82,151.78) .. (291.82,149.2) -- cycle ;
%Curve Lines [id:da130049971772502] 
\draw [line width=3]    (117.5,168.2) .. controls (119.99,168.81) and (121.14,170.38) .. (120.96,172.92) .. controls (120.86,175.47) and (121.76,176.6) .. (123.66,176.29) .. controls (126.14,176.64) and (127.38,178.07) .. (127.37,180.56) .. controls (127.43,183.07) and (128.39,184.09) .. (130.24,183.6) .. controls (132.7,183.71) and (134.01,184.99) .. (134.17,187.44) .. controls (134.4,189.91) and (135.74,191.1) .. (138.21,191.03) .. controls (140.63,190.87) and (142.01,191.99) .. (142.36,194.38) .. controls (142.06,196.25) and (143.12,197.03) .. (145.53,196.74) .. controls (147.9,196.36) and (149.33,197.34) .. (149.84,199.67) .. controls (150.43,202.01) and (151.89,202.91) .. (154.23,202.37) .. controls (156.52,201.75) and (158,202.57) .. (158.69,204.84) .. controls (159.46,207.09) and (160.97,207.84) .. (163.22,207.07) .. controls (165.41,206.24) and (166.94,206.91) .. (167.8,209.09) .. controls (168.74,211.26) and (170.28,211.85) .. (172.43,210.88) .. controls (174.51,209.84) and (176.06,210.36) .. (177.09,212.45) .. controls (178.19,214.52) and (179.75,214.98) .. (181.78,213.81) .. controls (183.75,212.58) and (185.32,212.96) .. (186.5,214.95) .. controls (187.75,216.91) and (189.32,217.22) .. (191.22,215.88) .. controls (193.05,214.49) and (195.02,214.79) .. (197.13,216.76) .. controls (198.54,218.59) and (200.11,218.74) .. (201.84,217.22) .. controls (203.51,215.67) and (205.07,215.76) .. (206.54,217.49) .. controls (208.08,219.2) and (209.64,219.22) .. (211.22,217.56) .. controls (212.73,215.87) and (214.66,215.81) .. (217.01,217.38) .. controls (218.68,218.93) and (220.21,218.81) .. (221.6,217.02) .. controls (222.91,215.19) and (224.42,215.01) .. (226.13,216.47) .. controls (228.66,217.75) and (230.51,217.43) .. (231.7,215.52) .. controls (232.81,213.59) and (234.27,213.27) .. (236.08,214.56) .. controls (237.95,215.8) and (239.73,215.31) .. (241.43,213.1) .. controls (242.32,211.08) and (243.71,210.63) .. (245.61,211.74) .. controls (248.26,212.54) and (249.96,211.89) .. (250.7,209.8) .. controls (251.35,207.71) and (252.66,207.13) .. (254.64,208.06) .. controls (257.32,208.6) and (258.9,207.8) .. (259.39,205.65) .. controls (260.38,203.17) and (261.89,202.28) .. (263.94,202.99) .. controls (266.03,203.62) and (267.47,202.65) .. (268.26,200.08) .. controls (268.36,197.93) and (269.72,196.88) .. (272.34,196.93) .. controls (274.48,197.3) and (275.51,196.41) .. (275.42,194.24) .. controls (275.71,191.66) and (276.91,190.47) .. (279.03,190.68) .. controls (281.64,190.27) and (282.75,189) .. (282.35,186.89) .. controls (282.24,184.31) and (283.24,182.97) .. (285.36,182.88) .. controls (287.88,182.09) and (288.78,180.69) .. (288.05,178.66) .. controls (287.54,176.13) and (288.33,174.66) .. (290.41,174.23) .. controls (292.77,173.04) and (293.44,171.5) .. (292.41,169.6) .. controls (291.5,167.17) and (292.04,165.56) .. (294.03,164.78) .. controls (296.18,163.17) and (296.59,161.5) .. (295.27,159.77) .. controls (293.88,158.12) and (294.15,156.39) .. (296.1,154.57) .. controls (297.9,153.29) and (298.02,151.86) .. (296.45,150.29) -- (296.5,149.2) ;
%Shape: Rectangle [id:dp6377577765308342] 
\draw  [dash pattern={on 4.5pt off 4.5pt}] (55,40) -- (232,40) -- (232,237) -- (55,237) -- cycle ;
%Shape: Rectangle [id:dp7939655649343296] 
\draw  [dash pattern={on 4.5pt off 4.5pt}] (232,40) -- (411,40) -- (411,237) -- (232,237) -- cycle ;

% Text Node
\draw (221,70.4) node [anchor=north west][inner sep=0.75pt]    {$h$};
% Text Node
\draw (221,116.9) node [anchor=north west][inner sep=0.75pt]    {$p$};
% Text Node
\draw (221,163.4) node [anchor=north west][inner sep=0.75pt]    {$q$};
% Text Node
\draw (162,125.4) node [anchor=north west][inner sep=0.75pt]    {$z$};
% Text Node
\draw (103,154.4) node [anchor=north west][inner sep=0.75pt]    {$s$};
% Text Node
\draw (294,126.4) node [anchor=north west][inner sep=0.75pt]    {$t$};
% Text Node
\draw (70,211.4) node [anchor=north west][inner sep=0.75pt]    {$P$};
% Text Node
\draw (176,102.4) node [anchor=north west][inner sep=0.75pt]    {$P'$};
% Text Node
\draw (386,211.4) node [anchor=north west][inner sep=0.75pt]    {$Q$};
% Text Node
\draw (271,102.4) node [anchor=north west][inner sep=0.75pt]    {$Q^{'}$};

\end{tikzpicture}}%
	\hfil
    }
    \caption{The path in bold cannot exist if $\{P,Q\}$ is a $3$-separation of $G'$.}\label{fig_second}
\end{figure}
\end{center}

Now suppose that $v_+(G)\geq 3$. Then, by~\ref{sublem:t_3<=2n-5}, $2v_+(G')\geq t_3+5$. After substituting into~(\ref{eq:three}) and using~(\ref{eq:one}), we get that
\[2v_+(G)\leq 2t_3+2t_2+t_1-5\leq 2v_3(G)-5.\]
Therefore $2v_3(G)+2v_+(G)\leq 4v_3(G)-5$, so
\[v_3(G)\geq \frac{2v(G)+5}{4}.\]
Since $v_3(G)$ has to be an integer, we have that
\[v_3(G)\geq \frac{v(G)+3}{2}. \tag{4}\label{eq:four}\]

Now we consider the case that $v_+(G')<3$. Clearly, when $v_+(G')=0$,
\[v(G)=v_3(G).\tag{5}\label{eq:five}\]

\begin{sublemma}\label{sublem:if hubs<2}
    If $v_+(G')=1$, then $T_3=T_2=\emptyset$ and $v_3(G)\geq \frac{2v(G)+2}{3}$.
\end{sublemma}
Suppose there is a vertex $t \in T_2\cup T_3$. By Lemma~\ref{lem:m3c and c3c}, $G'$ is simple. Therefore, $|N_{G'}(t) \cap V_+(G')| \geq 2$, contradicting the fact that $v_+(G') = 1$. Thus, $T_3=T_2=\emptyset$. Therefore, from~(\ref{eq:two}), we have that
\[2v_+(G)+2\leq t_1\leq v_3(G).\]
Thus, $2v_+(G)+2v_3(G)+2\leq 3v_3(G)$, so
\[v_3(G)\geq \frac{2v(G)+2}{3}.\tag{6}\label{eq:six}\]
\\

\begin{sublemma}\label{sublem:if hubs<3}
    If $v_+(G')=2$, then $T_3=\emptyset$, and $T_1\cup T_0\neq\emptyset$. Moreover, $v_3(G)\geq \frac{v(G)+3}{2}$.
\end{sublemma}

Suppose there is a vertex $t \in T_3$. As $G'$ is simple, $|N_{G'}(t) \cap V_+(G')| = 3$, contradicting the fact that $v_+(G') = 2$. Thus, $T_3=\emptyset$. Let $V_+(G')=\{a,b\}$. Suppose that $T_1\cup T_0=\emptyset$ and hence $V(G')=T_2\cup\{a,b\}$. Clearly, $d_{G'-\{a,b\}}(v)=1$ for all $v\in T_2$.
As $G'-\{a,b\}$ is connected, $G'-\{a,b\}$ is isomorphic to $K_2$ and, in particular, $|T_2|=2$. However, $T_2$ is a vertex cut of $G'$, a contradiction. Therefore $|T_1\cup T_0|=m$ for some $m\geq 1$, so $m=t_0+t_1$. From~(\ref{eq:three}), we have that
\[2v_+(G)+4\leq 2t_2+t_1\leq 2v_3(G)-m\leq 2v_3(G)-1.\]
Thus,
\[v_3(G)\geq \frac{2v(G)+5}{4}.\]
Moreover, because $v_3(G)$ is an integer, we can conclude that
\[v_3(G)\geq \frac{v(G)+3}{2}.\tag{7}\label{eq:seven}\]
\\

Evidently, a $3$-connected graph has at least four vertices and the only minimally and critically $3$-connected graph with four vertices is $K_4$. Clearly, $v_3(K_4)=4>\frac{v(K_4)+3}{2}$. Moreover, when $v(G)\geq 5$, we have
\[v(G)\geq \frac{2v(G)+2}{3}\geq \frac{v(G)+3}{2}.\]
Thus, from~(\ref{eq:four}),~(\ref{eq:five}),~(\ref{eq:six}), and~(\ref{eq:seven}), we conclude that if $G$ is both minimally and critically $3$-connected, in particular, if $G$ is super-minimally $3$-connected, then
\[
v_3(G) \geq \frac{v(G) + 3}{2}.
\]
\end{proof}

\subsection{Extremal examples}

In this section, we call a graph $G$ an {\it extremal example} if $G$ is super-minimally $3$-connected and $v_3(G)=\lceil\frac{v(G)+3}{2}\rceil$. It is straightforward to verify that $K_4$, the $4$-, and $5$-spoked wheels, and both graphs in Figure~\ref{fig_small_examples} are extremal examples.

\begin{center}
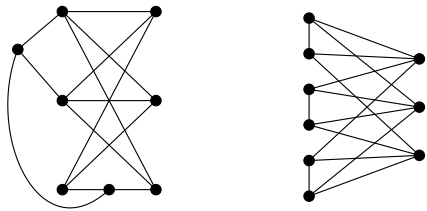
\begin{figure}[htb]
    \hbox to \hsize{
	\hfil
	\resizebox{6cm}{!}{\tikzset{every picture/.style={line width=0.75pt}} %set default line width to 0.75pt        

\begin{tikzpicture}[x=0.75pt,y=0.75pt,yscale=-1,xscale=1]
%uncomment if require: \path (0,300); %set diagram left start at 0, and has height of 300

%Shape: Ellipse [id:dp03090710352616166] 
\draw  [fill={rgb, 255:red, 0; green, 0; blue, 0 }  ,fill opacity=1 ] (366.82,98.88) .. controls (366.82,96.3) and (368.92,94.2) .. (371.5,94.2) .. controls (374.08,94.2) and (376.18,96.3) .. (376.18,98.88) .. controls (376.18,101.47) and (374.08,103.56) .. (371.5,103.56) .. controls (368.92,103.56) and (366.82,101.47) .. (366.82,98.88) -- cycle ;
%Shape: Ellipse [id:dp11674851229466654] 
\draw  [fill={rgb, 255:red, 0; green, 0; blue, 0 }  ,fill opacity=1 ] (366.82,162.88) .. controls (366.82,160.3) and (368.92,158.2) .. (371.5,158.2) .. controls (374.08,158.2) and (376.18,160.3) .. (376.18,162.88) .. controls (376.18,165.47) and (374.08,167.56) .. (371.5,167.56) .. controls (368.92,167.56) and (366.82,165.47) .. (366.82,162.88) -- cycle ;
%Shape: Ellipse [id:dp6855951200879552] 
\draw  [fill={rgb, 255:red, 0; green, 0; blue, 0 }  ,fill opacity=1 ] (366.82,34.88) .. controls (366.82,32.3) and (368.92,30.2) .. (371.5,30.2) .. controls (374.08,30.2) and (376.18,32.3) .. (376.18,34.88) .. controls (376.18,37.47) and (374.08,39.56) .. (371.5,39.56) .. controls (368.92,39.56) and (366.82,37.47) .. (366.82,34.88) -- cycle ;
%Shape: Ellipse [id:dp4668206509981664] 
\draw  [fill={rgb, 255:red, 0; green, 0; blue, 0 }  ,fill opacity=1 ] (366.82,130.88) .. controls (366.82,128.3) and (368.92,126.2) .. (371.5,126.2) .. controls (374.08,126.2) and (376.18,128.3) .. (376.18,130.88) .. controls (376.18,133.47) and (374.08,135.56) .. (371.5,135.56) .. controls (368.92,135.56) and (366.82,133.47) .. (366.82,130.88) -- cycle ;
%Shape: Ellipse [id:dp43063825713904413] 
\draw  [fill={rgb, 255:red, 0; green, 0; blue, 0 }  ,fill opacity=1 ] (366.82,66.88) .. controls (366.82,64.3) and (368.92,62.2) .. (371.5,62.2) .. controls (374.08,62.2) and (376.18,64.3) .. (376.18,66.88) .. controls (376.18,69.47) and (374.08,71.56) .. (371.5,71.56) .. controls (368.92,71.56) and (366.82,69.47) .. (366.82,66.88) -- cycle ;
%Shape: Ellipse [id:dp4813689452224985] 
\draw  [fill={rgb, 255:red, 0; green, 0; blue, 0 }  ,fill opacity=1 ] (366.82,194.88) .. controls (366.82,192.3) and (368.92,190.2) .. (371.5,190.2) .. controls (374.08,190.2) and (376.18,192.3) .. (376.18,194.88) .. controls (376.18,197.47) and (374.08,199.56) .. (371.5,199.56) .. controls (368.92,199.56) and (366.82,197.47) .. (366.82,194.88) -- cycle ;

%Shape: Ellipse [id:dp8596312499637115] 
\draw  [fill={rgb, 255:red, 0; green, 0; blue, 0 }  ,fill opacity=1 ] (465.82,71.55) .. controls (465.82,68.97) and (467.92,66.87) .. (470.5,66.87) .. controls (473.08,66.87) and (475.18,68.97) .. (475.18,71.55) .. controls (475.18,74.14) and (473.08,76.23) .. (470.5,76.23) .. controls (467.92,76.23) and (465.82,74.14) .. (465.82,71.55) -- cycle ;
%Shape: Ellipse [id:dp06859433566310347] 
\draw  [fill={rgb, 255:red, 0; green, 0; blue, 0 }  ,fill opacity=1 ] (465.82,158.21) .. controls (465.82,155.63) and (467.92,153.53) .. (470.5,153.53) .. controls (473.08,153.53) and (475.18,155.63) .. (475.18,158.21) .. controls (475.18,160.8) and (473.08,162.89) .. (470.5,162.89) .. controls (467.92,162.89) and (465.82,160.8) .. (465.82,158.21) -- cycle ;
%Shape: Ellipse [id:dp044478151476394334] 
\draw  [fill={rgb, 255:red, 0; green, 0; blue, 0 }  ,fill opacity=1 ] (465.82,114.88) .. controls (465.82,112.3) and (467.92,110.2) .. (470.5,110.2) .. controls (473.08,110.2) and (475.18,112.3) .. (475.18,114.88) .. controls (475.18,117.47) and (473.08,119.56) .. (470.5,119.56) .. controls (467.92,119.56) and (465.82,117.47) .. (465.82,114.88) -- cycle ;

%Straight Lines [id:da8801679591400453] 
\draw    (371.5,34.88) -- (470.5,71.55) ;
%Straight Lines [id:da3195188920868487] 
\draw    (371.5,34.88) -- (371.5,66.88) ;
%Straight Lines [id:da4215833970627715] 
\draw    (371.5,66.88) -- (470.5,71.55) ;
%Straight Lines [id:da2831578787191459] 
\draw    (371.5,34.88) -- (470.5,114.88) ;
%Straight Lines [id:da8715760691818661] 
\draw    (371.5,66.88) -- (470.5,158.21) ;
%Straight Lines [id:da17048117368267324] 
\draw    (371.5,98.88) -- (470.5,114.88) ;
%Straight Lines [id:da35190161256013774] 
\draw    (371.5,130.88) -- (470.5,114.88) ;
%Straight Lines [id:da7734315042857638] 
\draw    (371.5,98.88) -- (470.5,71.55) ;
%Straight Lines [id:da8632030799400173] 
\draw    (371.5,130.88) -- (470.5,158.21) ;
%Straight Lines [id:da6296318434960579] 
\draw    (371.5,98.88) -- (371.5,130.88) ;
%Straight Lines [id:da04754024091889042] 
\draw    (371.5,162.88) -- (470.5,158.21) ;
%Straight Lines [id:da016048748569163607] 
\draw    (371.5,162.88) -- (371.5,194.88) ;
%Straight Lines [id:da373413484384166] 
\draw    (371.5,194.88) -- (470.5,158.21) ;
%Straight Lines [id:da9086836218422384] 
\draw    (470.5,114.88) -- (371.5,194.88) ;
%Straight Lines [id:da4052066582894427] 
\draw    (470.5,71.55) -- (371.5,162.88) ;

%Shape: Ellipse [id:dp9620599414375052] 
\draw  [fill={rgb, 255:red, 0; green, 0; blue, 0 }  ,fill opacity=1 ] (145.32,29.02) .. controls (145.32,26.43) and (147.42,24.34) .. (150,24.34) .. controls (152.58,24.34) and (154.68,26.43) .. (154.68,29.02) .. controls (154.68,31.6) and (152.58,33.7) .. (150,33.7) .. controls (147.42,33.7) and (145.32,31.6) .. (145.32,29.02) -- cycle ;
%Shape: Ellipse [id:dp3515275023639972] 
\draw  [fill={rgb, 255:red, 0; green, 0; blue, 0 }  ,fill opacity=1 ] (145.32,109.02) .. controls (145.32,106.43) and (147.42,104.34) .. (150,104.34) .. controls (152.58,104.34) and (154.68,106.43) .. (154.68,109.02) .. controls (154.68,111.6) and (152.58,113.7) .. (150,113.7) .. controls (147.42,113.7) and (145.32,111.6) .. (145.32,109.02) -- cycle ;
%Shape: Ellipse [id:dp770952632143766] 
\draw  [fill={rgb, 255:red, 0; green, 0; blue, 0 }  ,fill opacity=1 ] (145.32,189.02) .. controls (145.32,186.43) and (147.42,184.34) .. (150,184.34) .. controls (152.58,184.34) and (154.68,186.43) .. (154.68,189.02) .. controls (154.68,191.6) and (152.58,193.7) .. (150,193.7) .. controls (147.42,193.7) and (145.32,191.6) .. (145.32,189.02) -- cycle ;

%Shape: Ellipse [id:dp32219358846063095] 
\draw  [fill={rgb, 255:red, 0; green, 0; blue, 0 }  ,fill opacity=1 ] (229.32,29.02) .. controls (229.32,26.43) and (231.42,24.34) .. (234,24.34) .. controls (236.58,24.34) and (238.68,26.43) .. (238.68,29.02) .. controls (238.68,31.6) and (236.58,33.7) .. (234,33.7) .. controls (231.42,33.7) and (229.32,31.6) .. (229.32,29.02) -- cycle ;
%Shape: Ellipse [id:dp7973943886915849] 
\draw  [fill={rgb, 255:red, 0; green, 0; blue, 0 }  ,fill opacity=1 ] (229.32,109.02) .. controls (229.32,106.43) and (231.42,104.34) .. (234,104.34) .. controls (236.58,104.34) and (238.68,106.43) .. (238.68,109.02) .. controls (238.68,111.6) and (236.58,113.7) .. (234,113.7) .. controls (231.42,113.7) and (229.32,111.6) .. (229.32,109.02) -- cycle ;
%Shape: Ellipse [id:dp3840992432329985] 
\draw  [fill={rgb, 255:red, 0; green, 0; blue, 0 }  ,fill opacity=1 ] (229.32,189.02) .. controls (229.32,186.43) and (231.42,184.34) .. (234,184.34) .. controls (236.58,184.34) and (238.68,186.43) .. (238.68,189.02) .. controls (238.68,191.6) and (236.58,193.7) .. (234,193.7) .. controls (231.42,193.7) and (229.32,191.6) .. (229.32,189.02) -- cycle ;

%Shape: Ellipse [id:dp15048420737052304] 
\draw  [fill={rgb, 255:red, 0; green, 0; blue, 0 }  ,fill opacity=1 ] (187.32,189.02) .. controls (187.32,186.43) and (189.42,184.34) .. (192,184.34) .. controls (194.59,184.34) and (196.68,186.43) .. (196.68,189.02) .. controls (196.68,191.6) and (194.59,193.7) .. (192,193.7) .. controls (189.42,193.7) and (187.32,191.6) .. (187.32,189.02) -- cycle ;
%Shape: Ellipse [id:dp636497078858011] 
\draw  [fill={rgb, 255:red, 0; green, 0; blue, 0 }  ,fill opacity=1 ] (105.32,63.02) .. controls (105.32,60.43) and (107.42,58.34) .. (110,58.34) .. controls (112.59,58.34) and (114.68,60.43) .. (114.68,63.02) .. controls (114.68,65.6) and (112.59,67.7) .. (110,67.7) .. controls (107.42,67.7) and (105.32,65.6) .. (105.32,63.02) -- cycle ;
%Straight Lines [id:da6318645652917828] 
\draw    (150,29.02) -- (234,29.02) ;
%Straight Lines [id:da26194706570538173] 
\draw    (150,109.02) -- (234,109.02) ;
%Straight Lines [id:da3123436780781993] 
\draw    (150,189.02) -- (234,189.02) ;
%Straight Lines [id:da9869602845617204] 
\draw    (110,63.02) -- (150,109.02) ;
%Straight Lines [id:da8198113900213991] 
\draw    (110,63.02) -- (150,29.02) ;
%Straight Lines [id:da9642962048134088] 
\draw    (150,29.02) -- (234,109.02) ;
%Straight Lines [id:da672645872083449] 
\draw    (150,29.02) -- (234,189.02) ;
%Straight Lines [id:da7337374543867713] 
\draw    (150,109.02) -- (234,29.02) ;
%Straight Lines [id:da04475571055380945] 
\draw    (150,189.02) -- (234,29.02) ;
%Straight Lines [id:da15121252373261396] 
\draw    (150,109.02) -- (234,189.02) ;
%Straight Lines [id:da26056437914041286] 
\draw    (150,189.02) -- (234,109.02) ;
%Curve Lines [id:da6171859474822132] 
\draw    (110,63.02) .. controls (79.86,140.5) and (129.86,246.5) .. (192,189.02) ;

\end{tikzpicture}}%
	\hfil
    }
    \caption{Two small extremal examples.}\label{fig_small_examples}
\end{figure}
\end{center}

We call the graph in Figure~\ref{fig_links}(a) a {\it link}. A {\it chain of length two} can be constructed as described below (see Figure~\ref{fig_links}(b)). By {\it suppressing a vertex $v$ of degree two in $G$}, we mean deleting $v$ and joining its two neighbors by an edge. 

\begin{enumerate}
    \item[(\romannum{1})] Take two links, $L_1$ and $L_2$.
    \item[(\romannum{2})] Label the right three vertices of $L_1$ from top to bottom as $x$, $y$, and $z$, and label the left three vertices of $L_2$ from top to bottom as $x$, $y$, and $z$.
    \item[(\romannum{3})] Label the remaining vertices of $L_1$ and $L_2$ so that $V(L_1)\cap V(L_2) = \{x, y, z\}$.
    \item[(\romannum{4})] Obtain $I_2$ by suppressing vertices $x$ and $z$ in $L_1\cup L_2$.
\end{enumerate}
Clearly, by using similar steps, we may construct a chain of arbitrary length.

\begin{center}
\begin{figure}[htb]
    \hbox to \hsize{
	\hfil
	\resizebox{10cm}{!}{\input{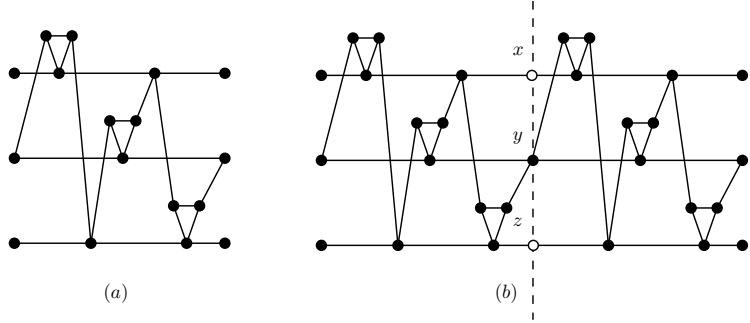}}%
	\hfil
    }
    \caption{A chain of length two is obtained from the graph on the right by suppressing the two white vertices.}\label{fig_links}
\end{figure}
\end{center}

We call the graph in Figure~\ref{fig_belt}(a) a {\it buckle}. A {\it belt of length~$n$}, denoted by $B_n$, is constructed by starting with a chain of length~$n$ and attaching a buckle at each end, as exemplified in Figure~\ref{fig_belt}(b).

\begin{center}
\begin{figure}[htb]
    \hbox to \hsize{
	\hfil
	\resizebox{10cm}{!}{\input{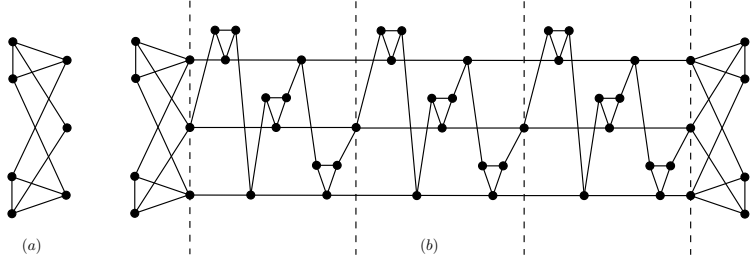}}%
	\hfil
    }
    \caption{(a) A buckle and (b) a belt of length three.}\label{fig_belt}
\end{figure}
\end{center}

For all $n \geq 1$, the graph $B_n$ has $13 + 12n$ vertices, of which $8 + 6n$ have degree three. Moreover, each $B_n$ is super-minimally $3$-connected. To see this, fix $n \geq 1$ and let $G_0 = B_n \setminus e$, where $e$ is an edge contained in one of the buckles. We use $V_-(G)$ to denote the set of vertices in $G$ with degree less than three. Since $e$ is incident to a vertex of degree three, it is clear that $V_-(G_0) \neq \emptyset$. For $i \geq 1$, define the sequence
\[
G_i = G_{i-1} - V_-(G_{i-1}).
\]
If $B_n \setminus e$ has a $3$-connected subgraph $H$, then $H$ must be contained in some $G_j$ for $j \geq 1$, where $V_-(G_j) = \emptyset$ and $G_{j'} = G_j$ for all $j' > j$. To see that $V(G_j)=\emptyset$, consider the subgraph of $G$ in Figure~\ref{fig_25vtx_example}. We may assume that $e$ meets $v_1$ or $v_2$. Then $v_1$ or $v_2$ must be deleted and that necessitates deleting them both. Then the vertices $v_3,v_4,\dots,v_{26}$ are forced to be removed and can be done so in that order. Since we have now deleted all of the vertices of the first link of $G$,  by symmetry, it follows that we are forced to delete all of the vertices of $G$. Therefore, every $3$-connected subgraph of $B_n$ must contain all of the edges in both buckles.

\begin{center}
\begin{figure}[htb]
    \hbox to \hsize{
	\hfil
	\resizebox{8cm}{!}{\input{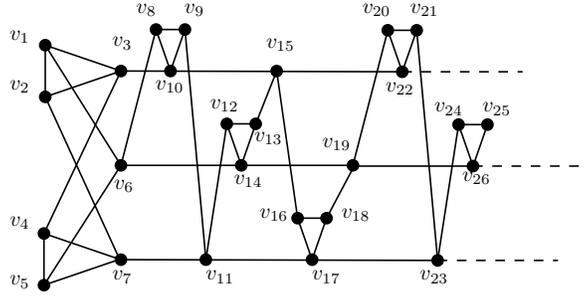}}%
	\hfil
    }
    \caption{Part of a long belt.}\label{fig_25vtx_example}
\end{figure}
\end{center}

If an edge $f$ of order four is deleted from $B_n$, then one can easily check that $B_n \setminus f$ has a $2$-separation $\{X, Y\}$ such that neither $X$ nor $Y$ contains all edges of both buckles. One of the small number of possibilities is shown in Figure~\ref{fig_order_4}. Thus, every $3$-connected subgraph of $B_n$ must contain all edges of order four.

Now let $g$ be an edge that is neither in a buckle nor of order four. Again, checking the small number of cases, one can see that deleting $g$ forces the removal of a degree-$4$ vertex in $B_n$, which in turn leads to the deletion of an edge of order four. Therefore, $g$ is also contained in every $3$-connected subgraph of $B_n$.

We conclude that $B_n$ is super-minimally $3$-connected, and the family of belts forms an infinite collection of extremal examples demonstrating the tightness of the bound in Theorem~\ref{thm:main}.

\begin{center}
\begin{figure}[htb]
    \hbox to \hsize{
	\hfil
	\resizebox{5cm}{!}{\input{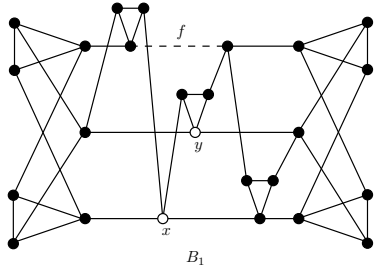}}%
	\hfil
    }
    \caption{$B_1\setminus f$ has a $2$-separation $\{X,Y\}$ such that $V(X)\cap V(Y)=\{x,y\}$.}\label{fig_order_4}
\end{figure}
\end{center}

\section{The maximum number of edges}

In this section, we prove Theorem~\ref{thm:main-density}. Before that we introduce some graph operations that are used in the proof.

\subsection{Bridging}

Let $ab$ be an edge of a graph $G$ and let $x$ be a vertex such that $x\notin\{a,b\}$. We perform a {\it vertex-to-edge bridging on $\{x,ab\}$} by subdividing $ab$ with a new vertex $y$ and then joining $x$ with $y$ (see Figure~\ref{fig_v_to_e}).
\begin{center}
    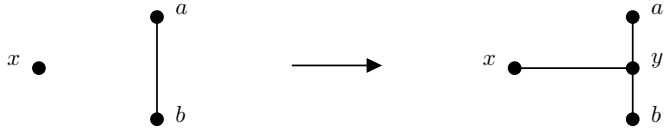
\begin{figure}[htb]
    \hbox to \hsize{
	\hfil
	\resizebox{9cm}{!}{\tikzset{every picture/.style={line width=0.75pt}} %set default line width to 0.75pt        

\begin{tikzpicture}[x=0.75pt,y=0.75pt,yscale=-1,xscale=1]
%uncomment if require: \path (0,300); %set diagram left start at 0, and has height of 300

%Shape: Ellipse [id:dp6831487339772531] 
\draw  [fill={rgb, 255:red, 0; green, 0; blue, 0 }  ,fill opacity=1 ] (159.39,83.59) .. controls (159.39,81.69) and (160.93,80.15) .. (162.83,80.15) .. controls (164.74,80.15) and (166.28,81.69) .. (166.28,83.59) .. controls (166.28,85.49) and (164.74,87.04) .. (162.83,87.04) .. controls (160.93,87.04) and (159.39,85.49) .. (159.39,83.59) -- cycle ;
%Shape: Ellipse [id:dp1922933377203041] 
\draw  [fill={rgb, 255:red, 0; green, 0; blue, 0 }  ,fill opacity=1 ] (159.39,142.5) .. controls (159.39,140.59) and (160.93,139.05) .. (162.83,139.05) .. controls (164.74,139.05) and (166.28,140.59) .. (166.28,142.5) .. controls (166.28,144.4) and (164.74,145.94) .. (162.83,145.94) .. controls (160.93,145.94) and (159.39,144.4) .. (159.39,142.5) -- cycle ;
%Straight Lines [id:da5641912165882181] 
\draw    (162.83,83.59) -- (162.83,142.5) ;

%Shape: Ellipse [id:dp1634956118849511] 
\draw  [fill={rgb, 255:red, 0; green, 0; blue, 0 }  ,fill opacity=1 ] (91.39,113.04) .. controls (91.39,111.14) and (92.93,109.6) .. (94.83,109.6) .. controls (96.74,109.6) and (98.28,111.14) .. (98.28,113.04) .. controls (98.28,114.95) and (96.74,116.49) .. (94.83,116.49) .. controls (92.93,116.49) and (91.39,114.95) .. (91.39,113.04) -- cycle ;

%Straight Lines [id:da2096732305826421] 
\draw    (240.5,111.5) -- (289.5,111.5) ;
\draw [shift={(292.5,111.5)}, rotate = 180] [fill={rgb, 255:red, 0; green, 0; blue, 0 }  ][line width=0.08]  [draw opacity=0] (8.93,-4.29) -- (0,0) -- (8.93,4.29) -- cycle    ;
%Shape: Ellipse [id:dp9822189660358216] 
\draw  [fill={rgb, 255:red, 0; green, 0; blue, 0 }  ,fill opacity=1 ] (433.39,83.59) .. controls (433.39,81.69) and (434.93,80.15) .. (436.83,80.15) .. controls (438.74,80.15) and (440.28,81.69) .. (440.28,83.59) .. controls (440.28,85.49) and (438.74,87.04) .. (436.83,87.04) .. controls (434.93,87.04) and (433.39,85.49) .. (433.39,83.59) -- cycle ;
%Shape: Ellipse [id:dp6059849036134455] 
\draw  [fill={rgb, 255:red, 0; green, 0; blue, 0 }  ,fill opacity=1 ] (433.39,142.5) .. controls (433.39,140.59) and (434.93,139.05) .. (436.83,139.05) .. controls (438.74,139.05) and (440.28,140.59) .. (440.28,142.5) .. controls (440.28,144.4) and (438.74,145.94) .. (436.83,145.94) .. controls (434.93,145.94) and (433.39,144.4) .. (433.39,142.5) -- cycle ;
%Straight Lines [id:da5313025202722378] 
\draw    (436.83,83.59) -- (436.83,142.5) ;

%Shape: Ellipse [id:dp897781039160612] 
\draw  [fill={rgb, 255:red, 0; green, 0; blue, 0 }  ,fill opacity=1 ] (365.39,113.04) .. controls (365.39,111.14) and (366.93,109.6) .. (368.83,109.6) .. controls (370.74,109.6) and (372.28,111.14) .. (372.28,113.04) .. controls (372.28,114.95) and (370.74,116.49) .. (368.83,116.49) .. controls (366.93,116.49) and (365.39,114.95) .. (365.39,113.04) -- cycle ;

%Shape: Ellipse [id:dp19018785998384102] 
\draw  [fill={rgb, 255:red, 0; green, 0; blue, 0 }  ,fill opacity=1 ] (433.39,113.04) .. controls (433.39,111.14) and (434.93,109.6) .. (436.83,109.6) .. controls (438.74,109.6) and (440.28,111.14) .. (440.28,113.04) .. controls (440.28,114.95) and (438.74,116.49) .. (436.83,116.49) .. controls (434.93,116.49) and (433.39,114.95) .. (433.39,113.04) -- cycle ;
%Straight Lines [id:da7067462663938437] 
\draw    (368.83,113.04) -- (436.83,113.04) ;

% Text Node
\draw (349,103.9) node [anchor=north west][inner sep=0.75pt]    {$x$};
% Text Node
\draw (446,74.4) node [anchor=north west][inner sep=0.75pt]    {$a$};
% Text Node
\draw (446,133.4) node [anchor=north west][inner sep=0.75pt]    {$b$};
% Text Node
\draw (446,102.9) node [anchor=north west][inner sep=0.75pt]    {$y$};
% Text Node
\draw (75,103.9) node [anchor=north west][inner sep=0.75pt]    {$x$};
% Text Node
\draw (172,74.4) node [anchor=north west][inner sep=0.75pt]    {$a$};
% Text Node
\draw (172,133.4) node [anchor=north west][inner sep=0.75pt]    {$b$};

\end{tikzpicture}}%
	\hfil
    }
    \caption{A vertex-to-edge bridging.}\label{fig_v_to_e}
\end{figure}
\end{center}

Let $ab$ and $cd$ be two distinct edges of $G$. We perform an {\it edge-to-edge bridging on $\{ab,cd\}$} by subdividing $ab$ with $x$, subdividing $cd$ with $y$, and then joining $x$ with $y$ (see Figure~\ref{fig_e_to_e}). We note that $ab$ and $cd$ could be two incident edges, that is, $\{a,b\}\cap \{c,d\}$ is not necessarily empty.

\begin{center}
    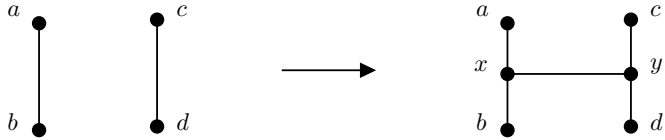
\begin{figure}[htb]
    \hbox to \hsize{
	\hfil
	\resizebox{9cm}{!}{\tikzset{every picture/.style={line width=0.75pt}} %set default line width to 0.75pt        

\begin{tikzpicture}[x=0.75pt,y=0.75pt,yscale=-1,xscale=1]
%uncomment if require: \path (0,300); %set diagram left start at 0, and has height of 300

%Shape: Ellipse [id:dp21221035808561017] 
\draw  [fill={rgb, 255:red, 0; green, 0; blue, 0 }  ,fill opacity=1 ] (123.39,105.59) .. controls (123.39,103.69) and (124.93,102.15) .. (126.83,102.15) .. controls (128.74,102.15) and (130.28,103.69) .. (130.28,105.59) .. controls (130.28,107.49) and (128.74,109.04) .. (126.83,109.04) .. controls (124.93,109.04) and (123.39,107.49) .. (123.39,105.59) -- cycle ;
%Shape: Ellipse [id:dp5701456303520068] 
\draw  [fill={rgb, 255:red, 0; green, 0; blue, 0 }  ,fill opacity=1 ] (123.39,164.5) .. controls (123.39,162.59) and (124.93,161.05) .. (126.83,161.05) .. controls (128.74,161.05) and (130.28,162.59) .. (130.28,164.5) .. controls (130.28,166.4) and (128.74,167.94) .. (126.83,167.94) .. controls (124.93,167.94) and (123.39,166.4) .. (123.39,164.5) -- cycle ;
%Straight Lines [id:da07765694187414052] 
\draw    (126.83,105.59) -- (126.83,164.5) ;

%Straight Lines [id:da24313857833689645] 
\draw    (260.5,131.5) -- (309.5,131.5) ;
\draw [shift={(312.5,131.5)}, rotate = 180] [fill={rgb, 255:red, 0; green, 0; blue, 0 }  ][line width=0.08]  [draw opacity=0] (8.93,-4.29) -- (0,0) -- (8.93,4.29) -- cycle    ;
%Shape: Ellipse [id:dp762768372909223] 
\draw  [fill={rgb, 255:red, 0; green, 0; blue, 0 }  ,fill opacity=1 ] (381.39,133.54) .. controls (381.39,131.64) and (382.93,130.1) .. (384.83,130.1) .. controls (386.74,130.1) and (388.28,131.64) .. (388.28,133.54) .. controls (388.28,135.45) and (386.74,136.99) .. (384.83,136.99) .. controls (382.93,136.99) and (381.39,135.45) .. (381.39,133.54) -- cycle ;
%Shape: Ellipse [id:dp15368101186652383] 
\draw  [fill={rgb, 255:red, 0; green, 0; blue, 0 }  ,fill opacity=1 ] (449.39,133.54) .. controls (449.39,131.64) and (450.93,130.1) .. (452.83,130.1) .. controls (454.74,130.1) and (456.28,131.64) .. (456.28,133.54) .. controls (456.28,135.45) and (454.74,136.99) .. (452.83,136.99) .. controls (450.93,136.99) and (449.39,135.45) .. (449.39,133.54) -- cycle ;
%Straight Lines [id:da26844863842920974] 
\draw    (384.83,133.54) -- (452.83,133.54) ;

%Shape: Ellipse [id:dp3990952117173594] 
\draw  [fill={rgb, 255:red, 0; green, 0; blue, 0 }  ,fill opacity=1 ] (188.39,103.59) .. controls (188.39,101.69) and (189.93,100.15) .. (191.83,100.15) .. controls (193.74,100.15) and (195.28,101.69) .. (195.28,103.59) .. controls (195.28,105.49) and (193.74,107.04) .. (191.83,107.04) .. controls (189.93,107.04) and (188.39,105.49) .. (188.39,103.59) -- cycle ;
%Shape: Ellipse [id:dp1789529754873509] 
\draw  [fill={rgb, 255:red, 0; green, 0; blue, 0 }  ,fill opacity=1 ] (188.39,162.5) .. controls (188.39,160.59) and (189.93,159.05) .. (191.83,159.05) .. controls (193.74,159.05) and (195.28,160.59) .. (195.28,162.5) .. controls (195.28,164.4) and (193.74,165.94) .. (191.83,165.94) .. controls (189.93,165.94) and (188.39,164.4) .. (188.39,162.5) -- cycle ;
%Straight Lines [id:da5525408539895373] 
\draw    (191.83,103.59) -- (191.83,162.5) ;

%Shape: Ellipse [id:dp766759644848925] 
\draw  [fill={rgb, 255:red, 0; green, 0; blue, 0 }  ,fill opacity=1 ] (381.39,105.59) .. controls (381.39,103.69) and (382.93,102.15) .. (384.83,102.15) .. controls (386.74,102.15) and (388.28,103.69) .. (388.28,105.59) .. controls (388.28,107.49) and (386.74,109.04) .. (384.83,109.04) .. controls (382.93,109.04) and (381.39,107.49) .. (381.39,105.59) -- cycle ;
%Shape: Ellipse [id:dp7183208179433163] 
\draw  [fill={rgb, 255:red, 0; green, 0; blue, 0 }  ,fill opacity=1 ] (381.39,164.5) .. controls (381.39,162.59) and (382.93,161.05) .. (384.83,161.05) .. controls (386.74,161.05) and (388.28,162.59) .. (388.28,164.5) .. controls (388.28,166.4) and (386.74,167.94) .. (384.83,167.94) .. controls (382.93,167.94) and (381.39,166.4) .. (381.39,164.5) -- cycle ;
%Straight Lines [id:da8536148055517575] 
\draw    (384.83,105.59) -- (384.83,164.5) ;

%Shape: Ellipse [id:dp3592893654643843] 
\draw  [fill={rgb, 255:red, 0; green, 0; blue, 0 }  ,fill opacity=1 ] (449.39,103.59) .. controls (449.39,101.69) and (450.93,100.15) .. (452.83,100.15) .. controls (454.74,100.15) and (456.28,101.69) .. (456.28,103.59) .. controls (456.28,105.49) and (454.74,107.04) .. (452.83,107.04) .. controls (450.93,107.04) and (449.39,105.49) .. (449.39,103.59) -- cycle ;
%Shape: Ellipse [id:dp16773811979199593] 
\draw  [fill={rgb, 255:red, 0; green, 0; blue, 0 }  ,fill opacity=1 ] (449.39,162.5) .. controls (449.39,160.59) and (450.93,159.05) .. (452.83,159.05) .. controls (454.74,159.05) and (456.28,160.59) .. (456.28,162.5) .. controls (456.28,164.4) and (454.74,165.94) .. (452.83,165.94) .. controls (450.93,165.94) and (449.39,164.4) .. (449.39,162.5) -- cycle ;
%Straight Lines [id:da45362032387211515] 
\draw    (452.83,103.59) -- (452.83,162.5) ;

% Text Node
\draw (201,94.4) node [anchor=north west][inner sep=0.75pt]    {$c$};
% Text Node
\draw (201,153.4) node [anchor=north west][inner sep=0.75pt]    {$d$};
% Text Node
\draw (108,94.4) node [anchor=north west][inner sep=0.75pt]    {$a$};
% Text Node
\draw (108,153.4) node [anchor=north west][inner sep=0.75pt]    {$b$};
% Text Node
\draw (462,153.4) node [anchor=north west][inner sep=0.75pt]    {$d$};
% Text Node
\draw (462,94.4) node [anchor=north west][inner sep=0.75pt]    {$c$};
% Text Node
\draw (366,153.4) node [anchor=north west][inner sep=0.75pt]    {$b$};
% Text Node
\draw (366,94.4) node [anchor=north west][inner sep=0.75pt]    {$a$};
% Text Node
\draw (365,124.4) node [anchor=north west][inner sep=0.75pt]    {$x$};
% Text Node
\draw (462,123.4) node [anchor=north west][inner sep=0.75pt]    {$y$};

\end{tikzpicture}}%
	\hfil
    }
    \caption{An edge-to-edge bridging.}\label{fig_e_to_e}
\end{figure}
\end{center}

We omit the elementary proof of the following result.

\begin{proposition}\label{prop:bridging preserve 3-con}
    If $G$ is a $3$-connected graph and $G'$ is obtained from $G$ by applying a vertex-to-edge or an edge-to-edge bridging, then $G'$ is $3$-connected.
\end{proposition}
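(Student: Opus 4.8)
The plan is to verify directly the two defining conditions of $3$-connectivity for $G'$: that $G'$ has more than three vertices and that it has no vertex cut of size at most two. The vertex count is immediate, since $G$ is $3$-connected and hence has at least four vertices, and each bridging adds one or two new vertices. For the cut condition I would fix an arbitrary pair $\{p,q\}\subseteq V(G')$ and argue that $G'-\{p,q\}$ is connected. Throughout I would rely on two elementary facts about the $3$-connected graph $G$: first, $G-w$ is $2$-connected for every vertex $w$; and second, deleting a single edge from a $2$-connected graph leaves it connected. I would also use that subdividing an edge preserves connectivity. The unifying device is \emph{walk rerouting}: every edge of $G$ that survives in $G'$ is kept, while each bridged edge $ab$ (or $cd$) is replaced by its two-edge detour $a\,y\,b$ (or $c\,y\,d$, or $a\,x\,b$) whenever the relevant new midpoint has not been deleted.

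For the vertex-to-edge bridging on $\{x,ab\}$, where the new vertex $y$ is adjacent to $a,b,x$, I would split on whether $y\in\{p,q\}$. If $y$ is deleted, then the other deleted vertex lies in $V(G)$, and $G'-\{p,q\}$ is exactly $(G-q)-ab$ for some $q\in V(G)$, which is connected because $G-q$ is $2$-connected and stays connected after deletion of the single edge $ab$. If $y\notin\{p,q\}$, then $p,q\in V(G)$, so $G-\{p,q\}$ is connected; transporting each of its walks into $G'-\{p,q\}$ by replacing any traversal of $ab$ with the detour $a\,y\,b$ shows all of $V(G)\setminus\{p,q\}$ remains connected, and since $y$ has the three neighbors $a,b,x$ while only two vertices are removed, $y$ itself attaches to this remainder.

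The edge-to-edge bridging on $\{ab,cd\}$, with $x$ adjacent to $a,b$, $y$ adjacent to $c,d$, and the new edge $xy$, is the main work, and I would split on $|\{p,q\}\cap\{x,y\}|$. If $\{p,q\}=\{x,y\}$, the remainder is $G-ab-cd$, connected since $G$ is $3$-connected. If $\{p,q\}\cap\{x,y\}=\emptyset$, I would reroute $ab$ through $x$ and $cd$ through $y$; the only subtlety is attaching $x$ and $y$, and a new vertex fails to attach directly only if $\{p,q\}$ equals $\{a,b\}$ or $\{c,d\}$, which, since $ab\neq cd$, cannot happen for both at once, so the undeleted new vertex reaches the remainder and pulls the other along $xy$. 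The genuinely delicate case—and the one I expect to be the main obstacle—is when exactly one new vertex, say $x$, is deleted while $y$ survives: the detour $a\,x\,b$ for the bridged edge $ab$ is now \emph{destroyed}, so a walk forced through $ab$ has no direct replacement. Here I would invoke the stronger fact that $G-q$ is $2$-connected, so that $(G-q)-ab$ is still connected; rerouting the other bridged edge $cd$ through the surviving vertex $y$ then exhibits $G'-\{x,q\}$ as a subdivision of a connected graph and hence connected. The symmetric case is identical after swapping the roles of $(ab,x)$ and $(cd,y)$. The one remaining point to watch, again dispatched by the $2$-connectivity of $G-q$ together with the distinctness of the two bridged edges, is that the possible coincidences among $a,b,c,d$ (the bridged edges may be incident) never strand a new vertex. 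Assembling the cases shows $G'$ has no cut of size at most two, completing the proof.
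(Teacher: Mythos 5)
Your proof is correct. The paper itself offers no argument to compare against---it states ``we omit the elementary proof of the following result''---so your write-up simply supplies the missing verification. The case analysis is complete and sound: splitting on how many of the new subdivision vertices lie in the deleted pair $\{p,q\}$, using that $G-q$ is $2$-connected (hence bridgeless) when a subdivision vertex is deleted, that a $3$-connected graph remains connected after deleting two edges, and rerouting surviving bridged edges through their subdivision vertices; the incident-edges coincidence ($\{a,b\}\cap\{c,d\}\neq\emptyset$) is also handled, since $ab\neq cd$ forces $\{a,b\}\neq\{c,d\}$ in a simple graph, so a new vertex is never stranded.
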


\begin{lemma}\label{lem:briding a wheel}
    Suppose $G$ is a wheel with at least three spokes and $G'$ is obtained from $G$ by performing a vertex-to-edge bridging on $\{x,ab\}$. Then $G'$ is super-minimally $3$-connected if and only if $x$ is the hub of $G$ and $ab$ is a rim edge. In particular, if $G'$ is super-minimally $3$-connected, then $G'$ is also a wheel.
\end{lemma}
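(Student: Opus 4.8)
The plan is to separate the two directions and to treat the degenerate wheel $W_3=K_4$ with care. Throughout, write the hub of $G$ as $h$ and the rim cycle as $v_1v_2\cdots v_n$, so that the edges of $G$ are the spokes $hv_k$ and the rim edges $v_kv_{k+1}$, every rim vertex has degree three, and $h$ has degree $n$. Since the bridging requires $x\notin\{a,b\}$, the only edges available when $x=h$ are those avoiding $h$, namely the rim edges; so when $x=h$ the edge $ab$ is necessarily a rim edge.

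For the direction ($\Leftarrow$) I would argue directly that bridging $\{h,\,v_iv_{i+1}\}$ subdivides the rim edge $v_iv_{i+1}$ by a new vertex $y$ and then joins $h$ to $y$. The rim cycle is thereby lengthened to $v_1\cdots v_i\,y\,v_{i+1}\cdots v_n$, a cycle on $n+1$ vertices, and $h$ becomes adjacent to every vertex of this new cycle. Hence $G'$ is exactly the wheel $W_{n+1}$. Because $n+1\ge 4$, this is a wheel with at least three spokes, so it is uniformly $3$-connected and therefore super-minimally $3$-connected by Lemma~\ref{lem:inclusion lemma}(\romannum{1}). The same observation yields the final assertion: whenever the stated condition holds, $G'$ is the wheel $W_{n+1}$.

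The substance is the direction ($\Rightarrow$), which I would prove in contrapositive form. First I would dispose of $n=3$: here $G=K_4$ has no distinguished hub, and for any admissible bridging the vertex $x$ may be taken as the hub, with $ab$ a rim edge of the triangle on the remaining three vertices, so the previous paragraph gives $G'=W_4$ and the stated condition in fact holds. Now suppose $n\ge 4$. Then $h$ is the unique vertex of $G$ of degree exceeding three, so the condition fails precisely when $x$ is a rim vertex $v_j$. By Proposition~\ref{prop:bridging preserve 3-con}, $G'$ is $3$-connected. In $G'$ the vertex $v_j$ has degree four with neighbours $h,\,v_{j-1},\,v_{j+1},\,y$, and $h$ is still adjacent to $v_j$. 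I would exhibit four internally disjoint $h$--$v_j$ paths: the edge $hv_j$ itself; one path entering $v_j$ through each of $v_{j-1}$ and $v_{j+1}$ along an intact spoke; and one path entering through $y$, namely $h\,y\,v_j$ when the bridged edge was the spoke $hv_i$ (so that $h\sim y$), and $h\,v_i\,y\,v_j$ or $h\,v_{i+1}\,y\,v_j$ when the bridged edge was the rim edge $v_iv_{i+1}$. If a spoke at $v_{j-1}$ or $v_{j+1}$ has itself been subdivided, I would instead reach that neighbour by a short detour once around the rim, which is available because $n\ge 4$. Four internally disjoint paths between the adjacent pair $h,v_j$ contradict the characterisation in Lemma~\ref{lem:bollobas}, so $G'$ is not minimally $3$-connected; an edge whose deletion preserves $3$-connectivity then yields a proper $3$-connected subgraph, and $G'$ is not super-minimally $3$-connected.

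The main obstacle is the uniform construction of the fourth path in the ($\Rightarrow$) step: its shape depends on whether $ab$ is a rim edge or a spoke and, in the spoke case, on the position of the subdivided spoke relative to $v_{j-1}$ and $v_{j+1}$, where a detour around the rim is required and the hypothesis $n\ge 4$ is exactly what keeps that detour internally disjoint from the other three paths. The only other delicate point is the hub ambiguity of $K_4$, which is why $n=3$ is isolated at the outset; everything else is routine bookkeeping of degrees and incidences.
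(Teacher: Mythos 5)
Your proof is correct and takes essentially the same approach as the paper: the backward direction observes that bridging the hub with a rim edge yields the wheel with one more spoke, and the forward direction (stated contrapositively, with the hub-ambiguity of $K_4$ isolated exactly as the paper does) shows that when $x$ is a rim vertex there are four internally disjoint $x$--$h$ paths, contradicting Lemma~\ref{lem:bollobas}. The only difference is presentational: you construct the four paths explicitly, including the rim detours when the subdivided edge meets $v_{j-1}$ or $v_{j+1}$, whereas the paper merely asserts their existence.
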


\begin{proof}
    Suppose $G$ is a wheel with $n$ spokes and $G'$ is obtained from $G$ by bridging the hub with a rim edge. Then $G'$ is a wheel with $n+1$ spokes, so $G'$ is super-minimally $3$-connected.

    Conversely, suppose $x$ is the hub of $G$. If $ab$ is not a rim edge, then $G'$ is not simple, a contradiction. Therefore, we may assume that $x$ is not the hub of $G$. This implicitly implies that $G$ has at least four spokes, because, for the three-spoked wheel, every vertex can be viewed as the hub. We label the vertices on the rim of $G$ by $v_1,v_2,\dots,v_n$ consecutively for some $n\geq 4$ and label the hub of $G$ by $h$. Without loss of generality, we may assume that $x=v_1$. Suppose $ab=hv_i$ for some $i\in\{1,2,\dots,n\}$. Then $i\neq 1$, otherwise $G'$ is not simple. For all $i\in\{2,3\dots,n\}$, the graph $G'$ obtained by bridging $v_1$ and $hv_{i}$ has four internally disjoint paths between $v_1$ and $h$, contradicting Lemma~\ref{lem:bollobas}. Thus, $ab$ is a rim edge in $\{v_1v_2,v_2v_3,\dots,v_nv_1\}$. Clearly, $ab\notin\{v_1v_2,v_nv_1\}$, otherwise $G'$ is not simple. However, for all $ab\in \{v_2v_3, v_3v_4\dots,v_{n-1}v_n\}$, the graph $G'$ obtained by bridging $v_1$ and $ab$ has four internally disjoint paths between $v_1$ and $h$, contradicting Lemma~\ref{lem:bollobas}.
\end{proof}

\subsection{Enhanced deletion}

We now consider the reverse of the bridging operations described above. Let $a$ and $b$ be two nonadjacent vertices of a $3$-connected graph $G$, and let $xy$ be an edge of $G$. The graph $G+ab$ is obtained from $G$ by adding the edge $ab$. The \emph{enhanced deletion} operation $G\dd xy$ is defined as follows.

\begin{itemize}
    \item[(\romannum{1})] If, in $G\setminus xy$, we have $N(x)=\{a,b\}$ and $N(y)=\{c,d\}$, where $\{a,b\}$ and $\{c,d\}$ are disjoint pairs of nonadjacent vertices, then $G\dd xy = (G-\{x,y\}) + ab + cd$.

    \item[(\romannum{2})] If, for exactly one $z\in\{x,y\}$, the set $N_{G\setminus xy}(z)$ contains exactly two vertices, say $a$ and $b$, and these vertices are nonadjacent, then $G\dd xy = (G - z) + ab$.

    \item[(\romannum{3})] In all other cases, $G\dd xy = G\setminus xy$.
\end{itemize}

Figure~\ref{fig_G--xy} illustrates the operation $G\dd xy$ in these different cases.  
Our definition of $G\dd xy$ coincides with that of Kriesell~\cite{Kriesell-triangle-free}, but differs from Tutte’s original definition~\cite{Tutte1961}.  
Under our definition, the enhanced deletion $G\dd xy$ is always simple whenever $G$ is simple.

\begin{center}
\begin{figure}[htb]
    \hbox to \hsize{
	\hfil
	\resizebox{6cm}{!}{\input{figures/fig_G--xy}}%
	\hfil
    }
    \caption{$G\dd xy$ in different cases.}\label{fig_G--xy}
\end{figure}
\end{center}

The following result of Kriesell~\cite[Theorem 2]{Kriesell-triangle-free} is used in the proof of Theorem~\ref{thm:main-density}. An edge $e$ in a $3$-connected graph $G$ is {\it $3$-contractible} if the graph $G/e$ is $3$-connected. Since we are only considering simple graphs, after the contraction of $e$, we only keep the underlying simple graph.

\begin{lemma}\label{lem:kriesell}
    Let $G$ be a $3$-connected graph that is not $K_4$. If $xy$ is not $3$-contractible, then $G\dd xy$ is $3$-connected.
\end{lemma}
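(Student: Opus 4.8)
The plan is to prove the equivalent dichotomy that, for a $3$-connected $G\neq K_4$, the graphs $G/xy$ and $G\dd xy$ cannot both fail to be $3$-connected; since ``$xy$ is not $3$-contractible'' says exactly that $G/xy$ is not $3$-connected, this forces $G\dd xy$ to be $3$-connected. First I would extract structure from the non-contractibility. As $G$ is $3$-connected and not $K_4$, it has at least five vertices, so $G/xy$ has at least four. Contracting an edge cannot disconnect an induced subgraph, so any $2$-cut of $G/xy$ avoiding the contracted vertex would already be a $2$-cut of $G$; hence every $2$-cut of $G/xy$ contains the contracted vertex, which yields a vertex $z$ for which $\{x,y,z\}$ is a vertex cut of $G$. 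Fix the corresponding $3$-separation $\{A,B\}$ with $V(A)\cap V(B)=\{x,y,z\}$ and both interiors $V(A)\setminus\{x,y,z\}$ and $V(B)\setminus\{x,y,z\}$ nonempty.

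Next I would record the two facts that pin down the relevant case of the operation. Each of $x,y,z$ has a neighbor in each interior, since otherwise the remaining two of $\{x,y,z\}$ would already separate the two interiors, giving a $2$-cut of $G$. In particular, if $\deg_G(x)=3$ then the two neighbors of $x$ other than $y$ lie in opposite interiors, and so are nonadjacent because $\{A,B\}$ is edge-disjoint and hence $G$ has no edge between the two interiors; the same holds for $y$. Thus every degree-$3$ endpoint of $xy$ has nonadjacent surviving neighbors and is genuinely suppressed whenever the definition calls for it; the only way a degree-$3$ endpoint can survive is the degenerate situation where $x$ and $y$ share an interior neighbor, which I treat separately. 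In all other cases $H:=G\dd xy$ has minimum degree at least three, with cases (i)/(ii) occurring precisely when $y$ (resp.\ $x$) has degree three and case (iii) when both endpoints have degree at least four. This disposes of the ``leftover degree-two vertex'' that would otherwise defeat the statement.

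The heart of the argument is to rule out a $2$-cut of $H$. Suppose $\{s,t\}$ is such a cut, with parts $P,Q$. I would recover $G$ from $H$ by reinserting the suppressed vertices together with the edge $xy$: in case (iii) $G=H+xy$, in case (ii) $G$ is a vertex-to-edge bridging of $H$, and in case (i) an edge-to-edge bridging (these are exactly the inverses of $\dd$, and by Proposition~\ref{prop:bridging preserve 3-con} the forward operations preserve $3$-connectivity). When the reinserted data lie on a single side of $\{s,t\}$ — that is, when the added edge(s) and the vertex $y$ all sit in one part — the cut $\{s,t\}$ survives verbatim as a $2$-separation of $G$, contradicting $3$-connectivity.

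The main obstacle is the complementary configuration, where $\{s,t\}$ (lifted to $G$) crosses the separation $\{A,B\}$, so that the reinserted edge or vertex straddles the two parts and $\{s,t\}$ no longer separates $G$ by itself. Here I would uncross the two separations: applying submodularity of the vertex-connectivity function to $\{s,t\}$ and $\{A,B\}$ produces a separator of $G$ of order at most two, again a contradiction. Alternatively one can run a Menger $3$-fan argument from an interior vertex to $\{x,y,z\}$, in the style of Lemma~\ref{lem:H-private}, to show the crossing separations are incompatible. Finally the degenerate low-degree case — when $x$ and $y$ both have degree three and share an interior neighbor (the triangular-prism pattern), where suppressing both endpoints collapses $H$ to a small $3$-connected graph such as $K_4$ — must be verified directly, since there the generic bridging description degenerates. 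I expect this uncrossing together with the degenerate-case bookkeeping to be the only genuinely delicate part; the reductions in the first two steps are routine.
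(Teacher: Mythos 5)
There is nothing in the paper to compare your argument with: the paper does not prove Lemma~\ref{lem:kriesell} at all, but quotes it from Kriesell~\cite[Theorem~2]{Kriesell-triangle-free} and uses it as a black box. So your proposal has to stand on its own. Its preliminary steps do stand: the extraction of the $3$-cut $\{x,y,z\}$ from non-contractibility, the observation that each of $x$, $y$, $z$ has a neighbour in both interiors of the corresponding $3$-separation, and the conclusion that a degree-$3$ endpoint of $xy$ has its two remaining neighbours in opposite interiors, hence nonadjacent, are all correct, and they are the right structural ingredients.

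The proposal fails, however, at exactly the case you set aside, and the failure is not repairable under the definition of $\dd$ given in this paper. Suppose $x$ and $y$ both have degree three and share a neighbour $w$, so that $xyw$ is a triangle. You assert that there ``suppressing both endpoints collapses $H$ to a small $3$-connected graph such as $K_4$.'' But the paper's operation performs no suppression in this situation: case (i) of the definition requires the two neighbour pairs to be \emph{disjoint}, and they share $w$; case (ii) requires that \emph{exactly one} endpoint have two nonadjacent neighbours in $G\setminus xy$, and here both do; hence case (iii) applies and $G\dd xy=G\setminus xy$, a graph with two degree-$2$ vertices, which is never $3$-connected. This configuration is realizable under the hypotheses: take $G$ to be the triangular prism and $xy$ an edge of one of its triangles. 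Then $G$ is $3$-connected and not $K_4$, the simplified contraction $G/xy$ has a degree-$2$ vertex (so $xy$ is not $3$-contractible), and yet $G\dd xy=G\setminus xy$ is not $3$-connected. So the statement, read with this paper's rendition of $\dd$, is false, and the ``direct verification'' you propose for the degenerate case would refute it rather than confirm it. What your sketch is implicitly proving is the corresponding statement for the delete-then-successively-suppress operation (under which the prism does collapse to $K_4$, exactly as you predict), or equivalently the statement with a triangle-freeness hypothesis; one of these is what Kriesell's actual theorem rests on, and the discrepancy lies in the paper's translation of his definition. Separately, even in the non-degenerate cases the heart of the matter --- the situation where the $2$-cut of $G\dd xy$ separates the reinserted vertices, which you propose to handle by ``uncrossing'' or submodularity --- is only gestured at; that is where essentially all the work in a genuine proof lies, so as written the proposal is a plan rather than a proof.
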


\subsection{Cleaving}
Clearly, for every edge $ab$ of a super-minimally $3$-connected graph $G$, the graph $G\setminus ab$ is $2$-connected but not $3$-connected.

\begin{lemma}\label{lem:cleavable or internally 3-con}
    Suppose $G$ is a triangle-free super-minimally $3$-connected graph such that $G\setminus ab$ is internally $3$-connected. Then $G\dd ab$ is $3$-connected.
\end{lemma}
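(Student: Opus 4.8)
The plan is to first pin down the local structure forced by the hypotheses, then to recognize $G\setminus ab$ as a subdivision of $G\dd ab$ and transfer small cuts between the two graphs. As noted just before the lemma, $G\setminus ab$ is $2$-connected but not $3$-connected, so it has a $2$-separation; by internal $3$-connectivity one side is a $P_3$, whose middle vertex has degree $2$ in $G\setminus ab$. Since $G$ has minimum degree $3$, this middle vertex must be $a$ or $b$, say $a$; hence $d_G(a)=3$ and $N_G(a)=\{u,v,b\}$, where $u,v$ are the endpoints of the $P_3$. Triangle-freeness gives $u\not\sim v$, and simplicity gives $u,v\neq b$. I would then read off which clause of the enhanced deletion applies: triangle-freeness forbids a common neighbor of $a$ and $b$, so if $d_G(b)=3$ the pairs $N_{G\setminus ab}(a)=\{u,v\}$ and $N_{G\setminus ab}(b)=\{r,s\}$ are disjoint and nonadjacent (clause (i)), while if $d_G(b)\ge4$ only $a$ has degree $2$ (clause (ii)); clause (iii) never occurs. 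Thus $H:=G\dd ab$ equals $(G-\{a,b\})+uv+rs$ or $(G-a)+uv$.

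The key observation is that $G\setminus ab$ is exactly the subdivision of $H$ obtained by subdividing $uv$ with $a$ (and, in clause (i), $rs$ with $b$); equivalently, $G$ arises from $H$ by an edge-to-edge or vertex-to-edge bridging. First I would use this to show that any vertex cut $S$ of $H$ with $|S|\le2$ lifts to a cut on the same vertex set in $G\setminus ab$: since $a,b\notin V(H)$ we have $a,b\notin S$, and because a subdivided edge cannot cross a separation of $H$, each subdivision vertex can be placed on the appropriate side without creating a cross-edge. Second, I would determine all small cuts of $G\setminus ab$: its only degree-$2$ vertices are $a$ (and $b$ in clause (i)), so by internal $3$-connectivity every $2$-separation isolates $a$ or $b$, forcing every $2$-cut to be $\{u,v\}$ or $\{r,s\}$.

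I would then finish by contradiction. Note $|V(H)|\ge4$, since a triangle-free $3$-connected graph has at least $6$ vertices (Mantel's bound excludes $4$ and $5$). If $H$ were not $3$-connected it would have a cut $S$ with $|S|\le2$; lifting, $G\setminus ab$ would have a cut of size $\le2$. Size $\le1$ contradicts the $2$-connectivity of $G\setminus ab$, so $|S|=2$ and $S=\{u,v\}$ or (in clause (i)) $S=\{r,s\}$. But $H-\{u,v\}$ is connected: $G-\{u,v\}$ is connected because $G$ is $3$-connected, the vertex $a$ is a leaf there (its only remaining neighbor is $b$) and may be deleted, and the remaining degree-$2$ vertex $b$, if present, may be suppressed (deleted while adding $rs$) without disconnecting; the symmetric argument, with $a,b$ and $\{u,v\},\{r,s\}$ interchanged, shows $H-\{r,s\}$ is connected. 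This contradicts $S$ being a cut, so $H=G\dd ab$ is $3$-connected.

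The hard part will be the two bookkeeping steps in the middle paragraph: verifying that the only small cuts of $G\setminus ab$ are the two pendant cuts (this is where internal $3$-connectivity together with the degree count does the real work), and checking the lift in every position of $u,v,r,s$ relative to a hypothetical cut of $H$. Triangle-freeness is used throughout: it guarantees $u\not\sim v$ and $r\not\sim s$, eliminates clause (iii), and keeps $H$ simple. I note that Kriesell's Lemma~\ref{lem:kriesell} would give the conclusion immediately when $ab$ is not $3$-contractible, but $ab$ may well be $3$-contractible here (for example, every edge of $K_{3,3}$ is), so I would use the direct transfer argument above; alternatively, one could split on $3$-contractibility and invoke Lemma~\ref{lem:kriesell} only in the non-contractible case.
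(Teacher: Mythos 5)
Your proposal is correct, and it rests on the same key relationship the paper uses---namely that $G\setminus ab$ is a subdivision of $G\dd ab$, so a putative small cut of $G\dd ab$ transfers to $G\setminus ab$, where internal $3$-connectivity must then be contradicted---but the contradiction is reached by a genuinely different mechanism. The paper's proof needs no case analysis at all: it observes that $G\dd ab$ has minimum degree at least three, so in any $2$-separation $\{A,B\}$ of $G\dd ab$ both sides have at least four vertices; lifting the separation to $G\setminus ab$ (by subdividing the added edges) keeps both sides of size at least four, so neither side can be a $P_3$, violating internal $3$-connectivity in one stroke. You instead first pin down the local structure ($d_G(a)=3$, which clause of the enhanced deletion applies, clause (iii) impossible), use internal $3$-connectivity to identify the only possible $2$-cuts of $G\setminus ab$ as $\{u,v\}$ and $\{r,s\}$, lift a hypothetical cut of $H=G\dd ab$ into this short list, and then rule out each candidate by a direct connectivity computation (delete $\{u,v\}$ from the $3$-connected graph $G$, strip the leaf $a$, suppress $b$ if present). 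Your route is longer and more case-heavy, but it is sound, and it makes explicit several facts the paper's proof leaves implicit: the exact form of $G\dd ab$, why clause (iii) cannot occur under triangle-freeness, and hence why $G\dd ab$ has minimum degree three---an assertion the paper uses without comment. The paper's route buys brevity and robustness, since it never needs to know which clause applied. Your closing remark about Lemma~\ref{lem:kriesell} is also apt: that lemma covers only non-$3$-contractible edges, and edges satisfying the present hypotheses can be $3$-contractible (every edge of $K_{3,3}$ is), so a direct argument is indeed required.

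One small wording error, which does not affect correctness: in clause (ii) we have $b\in V(H)$, so your justification ``since $a,b\notin V(H)$ we have $a,b\notin S$'' is literally true only in clause (i). The lifting argument only requires that the \emph{subdivision} vertices ($a$, and $b$ only in clause (i)) avoid $S$, which holds automatically because they are not vertices of $H$; whether $S$ contains the ordinary vertex $b$ in clause (ii) is irrelevant, since then $b$ is simply removed from both graphs and the component argument (using that $u$ and $v$ are adjacent in $H$ whenever both survive) goes through unchanged. Rephrase that sentence accordingly.
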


\begin{proof}
    A triangle-free $3$-connected graph has at least six vertices, so $|V(G\dd ab)|\geq 4$. Suppose that $G\dd ab$ has a $2$-separation $\{A,B\}$. Because the minimum degree of $G\dd ab$ is at least three, we know that $\min\{|V(A)|,|V(B)|\}\geq 4$. It is straightforward to see that $G\setminus ab$ has a $2$-separation $\{A',B'\}$ such that $A'$ is obtained from $A$ by possibly subdividing an edge, and $B'$ is obtained from $B'$ by possibly subdividing an edge. Therefore, $\min\{|V(A')|,|V(B')|\}\geq 4$ and neither $A'$ nor $B'$ is $P_3$, a contradiction.
\end{proof}

Let $G$ be a $3$-connected graph. We define three types of {\it cleaving} operation as follows.
\begin{enumerate}
    \item[(\romannum{1})] Suppose $\{a_1b_1,a_2b_2,a_3b_3\}$ is a set of edges of $G$ such that $G\setminus\{a_1b_1,a_2b_2,a_3b_3\}$ has a $0$-separation $\{A,B\}$ such that $\{a_1,a_2,a_3\}\subseteq V(A)$ and $\{b_1,b_2,b_3\}\subseteq V(B)$. Then let $G_A$ be a graph obtained from $A$ by adding a vertex $x$ that is adjacent to $a_1,a_2$, and $a_3$, and let $G_B$ be a graph obtained from $B$ by adding a vertex $y$ that is adjacent to $b_1,b_2$, and $b_3$.
    \item[(\romannum{2})]  Suppose $\{a_1b_1,a_2b_2,c\}$ is a set of two edges and one vertex of $G$ such that $G\setminus\{a_1b_1,a_2b_2\}$ has a $1$-separation $\{A,B\}$ for which $\{a_1,a_2,c\}\subseteq V(A)$ and $\{b_1,b_2,c\}\subseteq V(B)$ where $\min\{d_A(c),d_B(c)\}\geq 2$. Then let $G_A$ be a graph obtained from $A$ by adding a vertex $x$ that is adjacent to $a_1,a_2$, and $c$, and let $G_B$ be a graph obtained from $B$ by adding a vertex $y$ that is adjacent to $b_1,b_2$, and $c$.
    \item[(\romannum{3})] Suppose $\{a_1b_1,c,d\}$ is a set of one edge and two vertices of $G$ such that $G\setminus\{a_1b_1,a_2b_2\}$ has a $2$-separation $\{A,B\}$ for which $\{a_1,c,d\}\subseteq V(A)$ and $\{b_1,c,d\}\subseteq V(B)$ where $\min\{d_A(c),d_B(c),d_A(d),d_B(d)\}\geq 2$. Then let $G_A$ be a graph obtained from $A$ by adding a vertex $x$ that is adjacent to $a_1,c$, and $d$, and let $G_B$ be a graph obtained from $B$ by adding a vertex $y$ that is adjacent to $b_1,c$, and $d$.
\end{enumerate}

\begin{center}
\begin{figure}[htb]
    \hbox to \hsize{
	\hfil
	\resizebox{12cm}{!}{\input{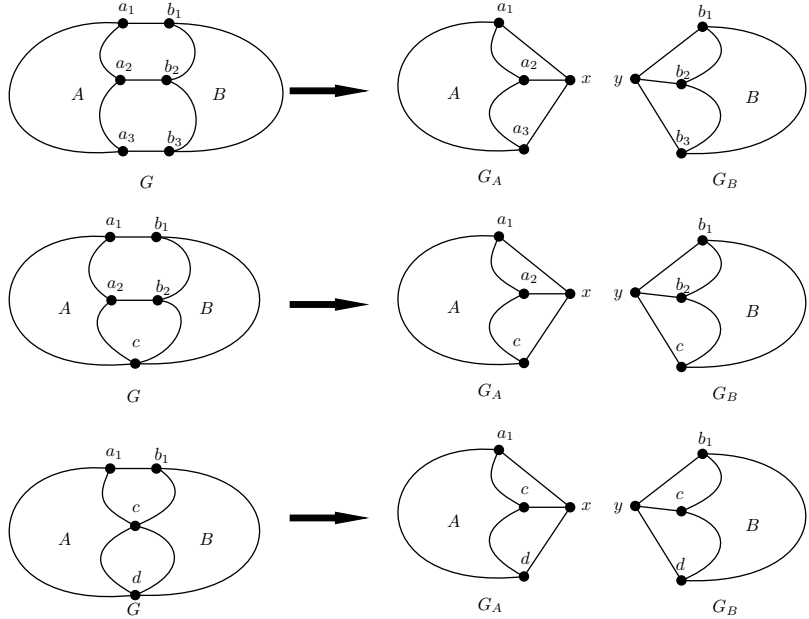}}%
	\hfil
    }
    \caption{Cleaving $G$ into $G_A$ and $G_B$.}\label{fig_cleaving}
\end{figure}
\end{center}

We call the sets of edges and vertices described above {\it compatible sets} of type (\romannum{1}),(\romannum{2}), and (\romannum{3}) respectively, and we shall refer to the process of constructing $G_A$ and $G_B$ as {\it cleaving $G$ with respect to a compatible set}. Figure~\ref{fig_cleaving} illustrates the three different types of cleaving. The following lemma is straightforward, and we omit its proof.

\begin{lemma}\label{lem:exist compatible set}
    Let $G$ be a super-minimally $3$-connected graph. If $G$ has an edge $ab$ such that $G\setminus ab$ is not internally $3$-connected, then $G$ has a compatible set $S$ containing $ab$. 
\end{lemma}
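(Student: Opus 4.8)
The plan is to locate a $2$-separation of $G\setminus ab$ that witnesses the failure of internal $3$-connectivity, to pin down the positions of $a$ and $b$ relative to it, and then to read off the type of the compatible set from the degrees of the two cut vertices. So first I would record the starting data: since $G$ is super-minimally $3$-connected, $G\setminus ab$ is $2$-connected but not $3$-connected, and by hypothesis it is not internally $3$-connected; hence it has a $2$-separation $\{C,D\}$ with $V(C)\cap V(D)=\{u,v\}$ in which neither $C$ nor $D$ is isomorphic to $P_3$. I would next show that $ab$ crosses this separation. If $a\in\{u,v\}$, or if $a$ and $b$ lay in a common side, then adding $ab$ back to the appropriate side would turn $\{C,D\}$ into a $2$-separation of $G$, contradicting $3$-connectivity. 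Thus, after relabelling, $a\in V(C)-V(D)$ and $b\in V(D)-V(C)$, and $u,v\notin\{a,b\}$; in particular $d_{G\setminus ab}(u)=d_G(u)\ge 3$ and likewise for $v$.

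The key bookkeeping uses the $2$-connectivity of $G\setminus ab$. For each cut vertex $w\in\{u,v\}$ one has $d_C(w)+d_D(w)=d_{G\setminus ab}(w)\ge 3$. Moreover neither $d_C(w)$ nor $d_D(w)$ can equal $0$, and if $w$ has a unique neighbour on one side then that neighbour is not the other cut vertex: in each of these degenerate situations one could move the offending edge across (or delete it) and exhibit a $1$-separation of $G\setminus ab$, contradicting $2$-connectivity. Consequently each of $u,v$ has degree at least $1$ on each side, is \emph{thin} (degree exactly $1$) on at most one side, and, when thin, has its unique neighbour there lying in the interior of that side. Call a cut vertex thin if it is thin on some side.

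I would then split into three cases according to the number of thin cut vertices and produce the matching compatible set. If neither $u$ nor $v$ is thin, then $d_C(u),d_D(u),d_C(v),d_D(v)\ge 2$, and $\{ab,u,v\}$ is a compatible set of type~(\romannum{3}) witnessed by $\{C,D\}$. If exactly one cut vertex, say $u$, is thin, say $D$-thin with unique $D$-neighbour $u_1$ in the interior of $D$, then deleting $ab$ and $uu_1$ moves $u$ entirely onto the $C$-side and yields a $1$-separation $\{C,\,D-u\}$ with cut $\{v\}$; since $uv\notin E(D)$ we get $d_{D-u}(v)=d_D(v)\ge 2$ together with $d_C(v)\ge 2$, so $\{ab,uu_1,v\}$ is a compatible set of type~(\romannum{2}). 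If both $u$ and $v$ are thin, then deleting $ab$ together with the interior thin-edge at each of $u$ and $v$ severs every remaining connection between the two sides, producing a $0$-separation; the three deleted edges then form a compatible set of type~(\romannum{1}), where the non-$P_3$ hypothesis guarantees that each resulting side is nonempty and that the three edges are distinct.

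The hard part will be the degree bookkeeping in the second paragraph, since this is exactly what rules out the genuinely dangerous degeneracies: a thin neighbour coinciding with the opposite cut vertex, a cut vertex becoming degenerate (degree $0$) on one side, or a side collapsing to $P_3$ or to too few vertices after a thin vertex is moved across. Once these are excluded by the $2$-connectivity of $G\setminus ab$ and the non-$P_3$ hypothesis, the matching of the thinness pattern to one of the three cleaving constructions, and the verification of the structural and degree conditions in each type, is routine.
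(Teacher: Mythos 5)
Your skeleton is surely the intended one (the paper actually omits this proof as ``straightforward''): take a $2$-separation $\{C,D\}$ of $G\setminus ab$ with neither side isomorphic to $P_3$, show $ab$ crosses it with $a,b\notin\{u,v\}$, and branch on which of $u,v$ has degree one in a side. Your Case A and the bookkeeping you do carry out (positive degree on both sides, thin on at most one side, a thin neighbour is never the other cut vertex) are all correct. The gap is that in Cases B and C you never exclude the coincidences under which the set you exhibit is degenerate. A compatible set is only usable---and the definition is clearly only intended---when the three designated endpoints on each side of the separation are distinct vertices, because cleaving attaches a new vertex to exactly those three vertices; if two of them coincide, the cleaved graph has a vertex of degree two, so Lemma~\ref{lem:cleaving preserves sm3c} and the edge counts in the proof of Theorem~\ref{thm:main-density} both collapse. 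Concretely, in Case B nothing prevents the unique $D$-neighbour $u_1$ of the thin vertex $u$ from being $b$ itself, in which case your set $\{ab,uu_1,v\}$ has $b_1=b_2=b$; in Case C you additionally need $u_1\neq v_1$ when $u,v$ are thin on the same side, and $u_1\neq a$ resp.\ $v_1\neq b$ in the mixed case. Your blanket justification (``move the offending edge across and exhibit a $1$-separation of $G\setminus ab$'') cannot dispose of $u_1=b$: that configuration is perfectly consistent with the $2$-connectivity of $G\setminus ab$, and is excluded only by the $3$-connectivity of $G$ itself, so this is a missing idea rather than an omitted routine check.

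The missing arguments are short, and both need the non-$P_3$ hypothesis again. If $u$ is thin in $D$ with unique neighbour $b$, then every edge of $G$ leaving $V(D)\setminus\{u,v,b\}$ ends in $\{v,b\}$ (an edge incident to the interior of $D$ lies in $D$, and the only $D$-edge at $u$ is $ub$); hence either $V(D)=\{u,v,b\}$, which together with $d_D(u)=1$, $d_D(v)\geq 1$ and $uv\notin E(D)$ forces $D\cong P_3$, or $\{v,b\}$ is a vertex cut of $G$ of size two---either way a contradiction. Symmetrically, a thin neighbour in $C$ is never $a$. If $u$ and $v$ are both thin in $D$ with $u_1=v_1=w$, the same reasoning makes $\{w\}$ a cut vertex of $G$ unless $V(D)=\{u,v,w\}$, which again gives $D\cong P_3$. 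Once these checks are inserted, each of your three cases produces a compatible set whose designated endpoints on each side are genuinely distinct, and the proof is complete; as written, it establishes only the literal statement under a reading of ``compatible set'' that the rest of the paper cannot use.
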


The next lemma shows that the graphs obtained by cleaving super-minimally $3$-connected graphs are also super-minimally $3$-connected.

\begin{lemma}\label{lem:cleaving preserves sm3c}
    If $G$ is a super-minimally $3$-connected graph and $S$ is a compatible set of $G$, then the graphs $G_A$ and $G_B$ obtained by cleaving $G$ with respect to $S$ are both super-minimally $3$-connected.
\end{lemma}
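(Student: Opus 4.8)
The plan is to verify the two defining properties of a super-minimally $3$-connected graph for each of $G_A$ and $G_B$: that it is $3$-connected, and that it has no proper $3$-connected subgraph. Since the construction is symmetric under interchanging the roles of $A$ and $B$ (and of $x$ and $y$), it suffices to treat $G_A$. I handle the three types of compatible set uniformly; to fix ideas I describe type (\romannum{3}), where $S=\{a_1b_1,c,d\}$, $\{A,B\}$ is a $2$-separation of $G\setminus a_1b_1$ with $V(A)\cap V(B)=\{c,d\}$, $a_1\in V(A)-V(B)$, $b_1\in V(B)-V(A)$, and $x$ is joined to the three \emph{terminals} $a_1,c,d$; the other two types are analogous, with terminals $\{a_1,a_2,a_3\}$ and $\{a_1,a_2,c\}$ respectively. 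I note first that the terminals are distinct (otherwise a set of two terminals would form a vertex cut of $G$), so $x$ has degree exactly three.

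For $3$-connectivity I reduce to Lemma~\ref{lem: cleaving is 3-con}. The idea is to realize $A$ as one side of a genuine $3$-separation of $G$ whose cut is the set of terminals. Concretely, let $B''$ be obtained from $B$ by adding the vertex $a_1$ together with the edge $a_1b_1$; then $A\cup B''=G$ and $V(A)\cap V(B'')=\{a_1,c,d\}$, so $\{A,B''\}$ is a $3$-separation of $G$ with cut equal to the terminals. To apply Lemma~\ref{lem: cleaving is 3-con} I must check $\min\{d_A(a_1),d_A(c),d_A(d)\}\ge 2$: the inequalities $d_A(c),d_A(d)\ge 2$ are part of the definition of a compatible set, while $a_1$ has all of its neighbors in $V(A)$ except $b_1$, so $d_A(a_1)=d_G(a_1)-1\ge 2$ because $G$ has minimum degree at least three. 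Lemma~\ref{lem: cleaving is 3-con} then gives that $G_A=A+x$ is $3$-connected. (In types (\romannum{1}) and (\romannum{2}) one forms $B''$ by moving the relevant $a$-endpoints and their bridge edges into $B$, obtaining a $3$-separation whose cut is again the terminals.)

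For super-minimality, suppose toward a contradiction that $H$ is a proper $3$-connected subgraph of $G_A$; I will manufacture a proper $3$-connected subgraph of $G$, contradicting that $G$ is super-minimally $3$-connected. If $x\notin V(H)$, then $H$ is a subgraph of $A$, hence of $G$, and it is proper since $V(B)\ne\emptyset$; done. If $x\in V(H)$, then $d_H(x)\ge 3$ forces $H$ to contain $x$ together with all three edges $xa_1,xc,xd$, so $a_1,c,d\in V(H)$, and the properness of $H$ must come from a vertex or edge of $A$ missing from $H-x$. I then \emph{expand $x$ back into the $B$-side}: set $H^{*}=(H-x)\cup B+a_1b_1$, glued along $\{c,d\}$. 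This is a subgraph of $G$, and it is proper because its intersection with $A$ is precisely $H-x\subsetneq A$. A short degree count (using $d_B(c),d_B(d)\ge 2$, $d_G(b_1)\ge 3$, and $d_H(v)\ge 3$) shows $H^{*}$ has minimum degree at least three.

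The crux is to prove that $H^{*}$ is $3$-connected, and this is where I expect the main difficulty. The structural picture is clean: $H^{*}$ is obtained by gluing the two $2$-connected graphs $H-x$ and $B$ along the shared vertices $\{c,d\}$ and adding the single bridge $a_1b_1$, where each part becomes $3$-connected upon adding one vertex joined to its three terminals (these augmented graphs are exactly $H$ and $G_B$). I would isolate this as a gluing lemma: if $M,N$ are $2$-connected with $V(M)\cap V(N)=\{c,d\}$, and both $M+w$ (with $w$ joined to $c,d$ and a third vertex $b_1\in V(M)$) and $N+w'$ (with $w'$ joined to $c,d$ and a third vertex $a_1\in V(N)$) are $3$-connected, then $M\cup N+a_1b_1$ is $3$-connected; the three types of cleaving correspond to gluing along two, one, or zero shared vertices with one, two, or three bridges. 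This lemma is proved by assuming a separation of order at most two in $H^{*}$ and pushing it into one of the $2$-connected parts, contradicting the $3$-connectivity of $H$ or of $G_B$; the bridge edge $a_1b_1$ and the shared terminals are exactly what prevent a small cut from surviving. (Alternatively one can try to contract a spanning tree of $B-\{c,d\}$ in $H^{*}$ down to the single vertex $x$ and invoke Lemma~\ref{lem:contracting forest}, but then one must reconcile the minimum-degree hypothesis with the requirement that the contraction be \emph{simple}, since $c$ or $d$ may send several edges into $B-\{c,d\}$ and thereby create parallel edges; the direct gluing argument avoids this subtlety.) Once $H^{*}$ is known to be $3$-connected it is a proper $3$-connected subgraph of $G$, the desired contradiction, completing the proof that $G_A$—and by symmetry $G_B$—is super-minimally $3$-connected.
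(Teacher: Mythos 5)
Your proposal follows the same strategy as the paper's own proof: $3$-connectivity of $G_A$ and $G_B$ comes from Lemma~\ref{lem: cleaving is 3-con}, and super-minimality is proved by showing that a proper $3$-connected subgraph $H$ of $G_A$ either avoids $x$ (and hence is a proper $3$-connected subgraph of $G$ inside $A$) or contains all three edges at $x$, in which case gluing $H-x$ back onto $B=G_B-y$ along the terminals and restoring the bridge edge(s) produces a proper $3$-connected subgraph of $G$ --- exactly the paper's construction $G'=(G_A'-x)\cup(G_B-y)+a_1b_1+a_2b_2+a_3b_3$. The only difference is one of detail, not of route: where the paper calls the $3$-connectivity of the glued graph ``straightforward to check,'' you isolate it as an explicit gluing lemma (using that the two parts become $3$-connected after adding $x$, resp.\ $y$) and sketch its proof, which is a correct and welcome elaboration.
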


\begin{proof}
The fact that $G_A$ and $G_B$ are $3$-connected follows from Lemma~\ref{lem: cleaving is 3-con}, so we only have to show that they do not have proper $3$-connected subgraphs. Suppose $S=\{a_1b_1,a_2b_2,a_3b_3\}$ and $G_A$, $G_B$ are obtained from a type (\romannum{1}) cleaving. If $G_A'$ is a proper $3$-connected subgraph of $G_A$, then, since $G$ is super-minimally $3$-connected, $G_A'$ contains at least one member of $\{xa_1,xa_2,xa_3\}$. Hence $G_A'$ contains all of $\{xa_1,xa_2,xa_3\}$. Let $G'$ be the graph $(G_A'-x)\cup (G_B-y)+a_1b_1+a_2b_2+a_3b_3$. It is straightforward to check that $G'$ is a proper $3$-connected subgraph of $G$, a contradiction. By symmetry, $G_B$ does not have a proper $3$-connected subgraph. For the other two types of cleaving operations, the proofs are similar and they are omitted.
\end{proof}

\setcounter{theorem}{6}
\begin{proof}[Proof of Theorem~\ref{thm:main-density}]
We argue by induction on $|V(G)|$. Clearly, $K_4$ is the only super-minimally $3$-connected graph that has fewer than five vertices. As $|E(K_4)|=2|V(K_4)|-2$, and $K_4$ is a wheel with three spokes, the base case holds. For the inductive step, suppose $|V(G)|\geq 5$ and the statement holds for all super-minimally $3$-connected graphs that have at most $|V(G)|-1$ vertices. We will show that $|E(G)|\leq 2|V(G)|-3$ unless $G$ is a wheel. Suppose, on the contrary, that $|E(G)|\geq 2|V(G)|-2$ and $G$ is not a wheel. First we show that

 \begin{sublemma}\label{sublem:3-contractible}
     every edge of $G$ is $3$-contractible.
 \end{sublemma}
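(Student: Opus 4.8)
The plan is to argue by contradiction: suppose that some edge $xy$ of $G$ is not $3$-contractible. Since $|V(G)|\ge 5$, we have $G\neq K_4$, so Lemma~\ref{lem:kriesell} applies and $G\dd xy$ is $3$-connected. I would then split the analysis according to the three cases in the definition of the enhanced deletion. In case~(\romannum{3}) we have $G\dd xy=G\setminus xy$, which is a proper $3$-connected subgraph of $G$; this contradicts the fact that $G$ is super-minimally $3$-connected, so case~(\romannum{3}) cannot occur. Thus it remains to rule out cases~(\romannum{1}) and~(\romannum{2}).

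In both remaining cases, $G$ is recovered from $G\dd xy$ by a single bridging operation: an edge-to-edge bridging on $\{ab,cd\}$ in case~(\romannum{1}) (where $x$ and $y$ are adjacent degree-$3$ vertices), and a vertex-to-edge bridging on $\{y,ab\}$ in case~(\romannum{2}) (where $x$ is a degree-$3$ vertex). The heart of the argument is the claim that $G\dd xy$ is \emph{itself} super-minimally $3$-connected. Granting this claim, $G\dd xy$ has strictly fewer vertices than $G$, so the inductive hypothesis bounds its edges. A direct count gives, in case~(\romannum{1}), $|V(G\dd xy)|=|V(G)|-2$ and $|E(G\dd xy)|=|E(G)|-3$, whence
\[
|E(G\dd xy)|=|E(G)|-3\ge \big(2|V(G)|-2\big)-3=2|V(G\dd xy)|-1,
\]
contradicting the inductive bound $|E(G\dd xy)|\le 2|V(G\dd xy)|-2$. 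In case~(\romannum{2}), $|V(G\dd xy)|=|V(G)|-1$ and $|E(G\dd xy)|=|E(G)|-2$, so the inductive bound forces $|E(G)|\le 2|V(G)|-2$; combined with the standing assumption $|E(G)|\ge 2|V(G)|-2$ this becomes an equality, and the equality case of the inductive hypothesis makes $G\dd xy$ a wheel. Then Lemma~\ref{lem:briding a wheel} shows that the bridging producing the super-minimally $3$-connected graph $G$ from the wheel $G\dd xy$ must be a hub-to-rim-edge bridging, so $G$ is a wheel, contradicting our assumption.

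To establish the claim, I would take a proper $3$-connected subgraph $H$ of $G\dd xy$ and manufacture from it a proper $3$-connected subgraph of $G$, contradicting super-minimality of $G$. If $H$ uses none of the edges added by the enhanced deletion, then $H$ is already a subgraph of $G-\{x,y\}\subseteq G$, and we are done. Otherwise, I would replace each added edge, say $ab$, by the path $a$--$x$--$b$, and reinstate $y$ together with its incident edges $xy$ and (in case~(\romannum{1})) $yc,yd$; the resulting graph is obtained from a $3$-connected supergraph of $H$ by exactly a bridging operation, hence is $3$-connected by Proposition~\ref{prop:bridging preserve 3-con}, and it is a proper subgraph of $G$.

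I expect the main obstacle to be precisely this lifting step, and specifically the bookkeeping when $H$ uses only one of the two added edges in case~(\romannum{1}), or omits an endpoint of an added edge. The clean instances reduce verbatim to Proposition~\ref{prop:bridging preserve 3-con}, but the degenerate instances require care: one must check that the vertices $c,d$ (respectively $a,b$) needed to restore the degree-$3$ vertices $x,y$ lie in $V(H)$, and, when they do, pass first to the auxiliary $3$-connected graph $H+cd$ so that the reconstruction is genuinely an edge-to-edge bridging. Controlling these degenerate configurations — exploiting that $x$ and $y$ have degree exactly three in $G$, so that their only possible reinsertion is through their three fixed neighbors — is where the real work lies; the edge-count contradiction that follows is then routine.
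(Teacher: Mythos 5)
Your overall route is the same as the paper's: invoke Lemma~\ref{lem:kriesell} to get that $G\dd xy$ is $3$-connected, eliminate case~(\romannum{3}) of the enhanced deletion by super-minimality, prove that $G\dd xy$ is itself super-minimally $3$-connected, and finish with the inductive edge bound, using the equality case and Lemma~\ref{lem:briding a wheel} when only one endpoint has degree three; your vertex and edge counts in both surviving cases agree with the paper's. The difference is that the paper actually closes the step you flag as ``where the real work lies,'' and the idea it uses there is missing from your proposal.

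The gap is concretely this: your proposal uses the hypothesis that $xy$ is not $3$-contractible only through Lemma~\ref{lem:kriesell}, but it must be used a second time, structurally. Since $G/xy$ is not $3$-connected and $|V(G)|\geq 5$, the graph $G$ has a $3$-cut $\{x,y,z\}$. Because $d_G(x)=3$ with $N_G(x)=\{a,b,y\}$ and every vertex of a minimum cut has a neighbor in each component of its complement, the vertices $a$ and $b$ lie in \emph{distinct} components of $G-\{x,y,z\}$ (and in case~(\romannum{1}) the same holds for $c$ and $d$). Hence in $G\dd xy$ every $a$--$b$ path other than the edge $ab$ itself meets $\{y,z\}$, or, in case~(\romannum{1}), meets $z$ or uses $cd$. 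So if a proper $3$-connected subgraph $H$ of $G\dd xy$ contained $ab$ but omitted $y$ (case~(\romannum{2})), or contained $ab$ but not $cd$ (case~(\romannum{1})), then $ab$ would be a bridge of $H-z$, or of $H$ itself when $z\notin V(H)$; but $H-z$ is $2$-connected on at least three vertices, so it has no bridge, a contradiction. This argument shows your ``degenerate configurations'' simply cannot occur, and it is exactly why the paper records the $3$-cut $\{x,y,z\}$ at the outset and asserts that $H$ must contain $\{ab,y,z\}$ (respectively $\{ab,cd,z\}$) before bridging. Without it, your lifting step genuinely fails: in case~(\romannum{2}) you have no way to reinsert $x$ if $y\notin V(H)$, and in case~(\romannum{1}) your fallback --- pass to $H+cd$ and perform an edge-to-edge bridging --- does not yield a \emph{proper} subgraph of $G$ when $H=G\dd xy\setminus cd$, since the reconstruction then returns all of $G$ and no contradiction with super-minimality results.
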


 Suppose $G/xy$ is not $3$-connected. Clearly $x,y$ are in a $3$-cut $\{x,y,z\}$ of $G$. Moreover, by Lemma~\ref{lem:kriesell}, $G\dd xy$ is $3$-connected. It is straightforward to see that at least one of $x$ and $y$ is a degree-$3$ vertex, otherwise $G\dd xy=G\setminus xy$, a contradiction. Without loss of generality, we may assume that $N_G(x)=\{a,b,y\}$. There are two possible cases as shown in Figure~\ref{fig_3-contractible}. 
 
  \begin{center}
\begin{figure}[htb]
    \hbox to \hsize{
	\hfil
	\resizebox{10cm}{!}{\input{figures/fig_3-contractible}}%
	\hfil
    }
    \caption{Two possible cases of $G\dd xy$.}\label{fig_3-contractible}
\end{figure}
\end{center}
 
 Suppose $d_G(y)\geq 4$. Then $G\dd xy=(G-x)+ab$. If $G\dd xy$ has a proper $3$-connected subgraph $H$, then $H$ must contain $\{ab,y,z\}$, otherwise $H$ will also be a proper $3$-connected subgraph of $G$. Let $H'$ be obtained by applying a vertex-to-edge bridging on $\{y,ab\}$. By Proposition~\ref{prop:bridging preserve 3-con}, $H'$ is $3$-connected. Clearly, $H'$ is a proper $3$-connected subgraph of $G$, a contradiction. Therefore, $G\dd xy$ is super-minimally $3$-connected. Moreover,
 \[|E(G)|=|E(G\dd xy)|+2\leq 2|V(G\dd xy)|=2|V(G)|-2.\]
 Because $|E(G)|\geq 2|V(G)|-2$, we know $|E(G\dd xy)|=2|V(G\dd xy)|-2$ and, by the inductive hypothesis, $G\dd xy$ is a wheel on at least three spokes. By Lemma~\ref{lem:briding a wheel}, $G$ is a wheel, a contradiction.

 We may now assume that $N_G(y)=\{x,c,d\}$. Then $G\dd xy=(G-\{x,y\})+ab+cd$. If $G\dd xy$ has a proper $3$-connected subgraph $H$, then $H$ must contain $\{ab,cd,z\}$, otherwise $H$ will also be a proper $3$-connected subgraph of $G$. Let $H'$ be the obtained by applying a edge-to-edge bridging on $\{ab,cd\}$. By Proposition~\ref{prop:bridging preserve 3-con}, $H'$ is $3$-connected. Clearly, $H'$ is a proper $3$-connected subgraph of $G$, a contradiction. Therefore, $G\dd xy$ is super-minimally $3$-connected. Moreover,
 \[|E(G)|=|E(G\dd xy)|+3\leq 2|V(G\dd xy)|+1=2|V(G)|-3,\]
 a contradiction. Hence~\ref{sublem:3-contractible} holds. \\
 
 We show next that

 \begin{sublemma}\label{sublem:triangle-free}
     $G$ is triangle-free.
 \end{sublemma}

 Suppose $G$ has a triangle on vertices $\{a,b,c\}$. By Lemma~\ref{lem:Halin_two_deg3_vtx}, it contains two vertices of degree three. Suppose $N_G(b)=\{a,c,x\}$ and $N_G(c)=\{a,b,y\}$. It is straightforward to check that $ab$ is not $3$-contractible as $c$ only has two neighbors in $G/ab$, a contradiction of~\ref{sublem:3-contractible}.\\

 Next we show the following.

 \begin{sublemma}\label{sublem:G--xy is 3-con}
     For every edge $xy$ of $G$, the graph $G\dd xy$ is $3$-connected.
 \end{sublemma}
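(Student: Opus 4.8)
The plan is to reduce everything to Lemma~\ref{lem:cleavable or internally 3-con}: I would show that $G\setminus xy$ is internally $3$-connected for the given edge $xy$, and then, since $G$ is triangle-free by~\ref{sublem:triangle-free} and super-minimally $3$-connected, that lemma immediately delivers that $G\dd xy$ is $3$-connected. So the entire content of \ref{sublem:G--xy is 3-con} lies in establishing that $G\setminus xy$ is internally $3$-connected, and I would argue this by contradiction.

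Suppose $G\setminus xy$ is not internally $3$-connected. By Lemma~\ref{lem:exist compatible set}, $G$ has a compatible set $S$ containing $xy$, and by Lemma~\ref{lem:cleaving preserves sm3c} the graphs $G_A$ and $G_B$ produced by cleaving $G$ along $S$ are both super-minimally $3$-connected. I would first verify that $G_A$ and $G_B$ are strictly smaller than $G$: the degree conditions built into the definition of a compatible set, together with triangle-freeness, keep either side of the underlying separation from being too small, so the apex added in each cleave does not restore the vertex count. This is precisely where the degenerate configurations (a side on only three vertices) have to be dispatched, and I expect it to be the main obstacle, since such a side can in principle produce a copy of $K_4$, which is itself a wheel and hence extremal rather than strictly sub-extremal.

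The decisive point is that $G_A$ and $G_B$ are triangle-free. A triangle in $G_A$ would have to use the newly added apex $v$, hence two adjacent neighbours of $v$; but for the vertex-cut cleaves (types~(ii) and~(iii)) the three neighbours of $v$ are exactly a $3$-element vertex cut of $G$ (the shared cut-vertices of the separation together with the endpoints, on that side, of the deleted edges), and this cut is independent because by~\ref{sublem:3-contractible} every edge of $G$ is $3$-contractible, which forces every $3$-cut of $G$ to be independent. As $G$ is triangle-free, $G_A$ and $G_B$ inherit triangle-freeness. Since a wheel with at least three spokes contains a triangle, $G_A$ and $G_B$ are not wheels, so by the inductive hypothesis of Theorem~\ref{thm:main-density} they satisfy the strict bounds $|E(G_A)|\le 2|V(G_A)|-3$ and $|E(G_B)|\le 2|V(G_B)|-3$.

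Finally I would compare edge counts. Recording how $|V|$ and $|E|$ split across $G_A$ and $G_B$ for each cleaving type and summing the two strict bounds yields $|E(G)|\le 2|V(G)|-3$ in every case; for a type-(i) cleave even the plain bound $|E(G_\bullet)|\le 2|V(G_\bullet)|-2$ suffices, since there the three deleted edges are replaced by six. This contradicts the standing assumption $|E(G)|\ge 2|V(G)|-2$, so $G\setminus xy$ must be internally $3$-connected and \ref{sublem:G--xy is 3-con} follows from Lemma~\ref{lem:cleavable or internally 3-con}. The only remaining care is in the small-side cases flagged above: there one piece may be $K_4$ and extremal, but the other piece is still triangle-free and strict and the same edge count closes, the sole exception being both sides equal to $K_4$, which forces $|V(G)|=5$ and $G$ to be a wheel, contradicting our assumption that $G$ is not a wheel.
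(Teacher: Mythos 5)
Your proposal is correct and follows essentially the same route as the paper's proof: assume $G\dd xy$ is not $3$-connected (equivalently, via Lemma~\ref{lem:cleavable or internally 3-con}, that $G\setminus xy$ fails to be internally $3$-connected), extract a compatible set containing $xy$ by Lemma~\ref{lem:exist compatible set}, cleave into super-minimally $3$-connected pieces by Lemma~\ref{lem:cleaving preserves sm3c}, rule out wheels using triangle-freeness so that the inductive hypothesis gives the strict bound $|E|\le 2|V|-3$ on each piece, and contradict $|E(G)|\ge 2|V(G)|-2$ by the per-type edge counts. One remark: your stable-$3$-cut argument (via~\ref{sublem:3-contractible}) showing that the pieces are themselves triangle-free already renders your closing $K_4$ fallback vacuous, since a $K_4$ piece would place a triangle inside $G$, contradicting~\ref{sublem:triangle-free}; this matters because that fallback, as stated, is also arithmetically off --- in a type~(\romannum{3}) cleave, one extremal piece plus one strict piece yields only $|E(G)|\le 2|V(G)|-2$, which is not a contradiction --- so you should simply delete it rather than rely on it.
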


 Suppose that $G\dd xy$ is not $3$-connected. By Lemma~\ref{lem:cleavable or internally 3-con}, $G\setminus xy$ is not internally $3$-connected and, by Lemma~\ref{lem:exist compatible set}, $G$ has a compatible set $S$ containing $xy$. Let $G_A$ and $G_B$ be the graphs obtained by cleaving $G$ with respect to $S$. By Lemma~\ref{lem:cleaving preserves sm3c}, both $G_A$ and $G_B$ are super-minimally $3$-connected. Moreover, since $G$ is triangle-free, it is clear that neither $G_A$ nor $G_B$ is a wheel. Hence, by the inductive hypothesis, $|E(G_A)|\leq 2|V(G_A)|-3$ and $|E(G_B)|\leq 2|V(G_B)|-3$.

If $S$ is a type (\romannum{1}) compatible set, then
 \[|E(G)|=|E(G_A)|+|E(G_B)|-3\leq 2|V(G_A)|+2|V(G_B)|-9=2|V(G)|-5. \]
If $S$ is a type (\romannum{2}) compatible set, then
 \[|E(G)|=|E(G_A)|+|E(G_B)|-4\leq 2|V(G_A)|+2|V(G_B)|-10=2|V(G)|-4. \]
If $S$ is a type (\romannum{3}) compatible set, then
 \[|E(G)|=|E(G_A)|+|E(G_B)|-5\leq 2|V(G_A)|+2|V(G_B)|-11=2|V(G)|-3. \]
In all cases, $|E(G)|\leq 2|V(G)|-3$, a contradiction. Thus~\ref{sublem:G--xy is 3-con} holds.\\

It follows immediately that

\begin{sublemma}\label{sublem:all incident to deg-3}
    every edge in $G$ is incident to a degree-$3$ vertex.
\end{sublemma}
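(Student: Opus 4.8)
The plan is to argue by contraposition, matching the degree hypothesis against the case distinction in the definition of the enhanced deletion and then invoking~\ref{sublem:G--xy is 3-con}. Suppose that some edge $xy$ of $G$ is incident to no degree-$3$ vertex, so that $d_G(x)\geq 4$ and $d_G(y)\geq 4$. First I would pass to $G\setminus xy$ and note that $d_{G\setminus xy}(x)=d_G(x)-1\geq 3$ and, likewise, $d_{G\setminus xy}(y)\geq 3$. In particular, neither $x$ nor $y$ is incident to exactly two vertices in $G\setminus xy$.

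Next I would consult the definition of $G\dd xy$. The hypotheses of cases~(\romannum{1}) and~(\romannum{2}) both require at least one of $x,y$ to have exactly two neighbors in $G\setminus xy$, so neither applies; we therefore fall into case~(\romannum{3}), whence $G\dd xy=G\setminus xy$. On the other hand, $G$ is super-minimally $3$-connected, hence minimally $3$-connected, so $G\setminus xy$ is not $3$-connected. This contradicts~\ref{sublem:G--xy is 3-con}, which asserts that $G\dd xy$ is $3$-connected for every edge $xy$. Hence no such edge exists, and every edge of $G$ is incident to a vertex of degree three.

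There is no real obstacle here: the claim is an immediate consequence of~\ref{sublem:G--xy is 3-con} once the degree condition is read off against the three cases defining the enhanced deletion. The only point needing (minimal) care is verifying that the assumption $\min\{d_G(x),d_G(y)\}\geq 4$ genuinely rules out cases~(\romannum{1}) and~(\romannum{2}), which reduces to the elementary observation that an endpoint of degree at least four retains at least three neighbors after the single edge $xy$ is deleted.
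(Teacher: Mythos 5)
Your proof is correct and is exactly the argument the paper intends: the paper derives this sublemma as an immediate consequence of~\ref{sublem:G--xy is 3-con}, and your contrapositive reading (two endpoints of degree at least four force case~(\romannum{3}) of the enhanced deletion, so $G\dd xy=G\setminus xy$, which cannot be $3$-connected since $G$ is super-minimally $3$-connected) is precisely that deduction spelled out.
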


Let $Z$ be the set of vertices of $G$ that have degree at least four, and let $C_1,C_2,\dots,C_k$ be the connected components of $G-Z$. Clearly, $Z\neq\emptyset$, otherwise $G$ is a cubic graph and $|E(G)|=\frac{3|V(G)|}{2}\leq2|E(G)|-2$ with equality holding only if $G=K_4$. Next we show the following.

\begin{sublemma}\label{sublem:one edge}
    For all $i\in\{1,2,\dots,k\}$ and $z\in Z$, there is at most one edge between $V(C_i)$ and $z$. In particular, $|Z|\geq 3$.
\end{sublemma}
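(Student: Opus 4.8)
The plan is to prove the two assertions in turn, assuming throughout the standing hypotheses together with the four preceding sublemmas. First I would record the structural consequences that make the argument go. By~\ref{sublem:all incident to deg-3} every edge of $G$ meets a degree-$3$ vertex, so $Z$ is an independent set; consequently every neighbor of a vertex of $C_i$ that lies outside $C_i$ belongs to $Z$, i.e. $N(V(C_i))\subseteq Z$. Two further facts follow from the earlier work: since by~\ref{sublem:3-contractible} every edge is $3$-contractible, no $3$-cut of $G$ can contain both endpoints of an edge, so \emph{every $3$-cut of $G$ is independent}; and, tracing the proof of~\ref{sublem:G--xy is 3-con} through Lemmas~\ref{lem:cleavable or internally 3-con} and~\ref{lem:exist compatible set}, for \emph{every} edge $xy$ the graph $G\setminus xy$ is internally $3$-connected (otherwise a compatible set exists and cleaving yields $|E(G)|\le 2|V(G)|-3$, a contradiction).

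For the first assertion I argue by contradiction: suppose some $z\in Z$ has two neighbours $u,v\in V(C_i)$. Since $G$ is triangle-free (\ref{sublem:triangle-free}) and $u,v$ share the neighbour $z$, the vertices $u$ and $v$ are nonadjacent, and as $C_i$ is connected there is a $u$--$v$ path $P$ of length at least two inside $C_i$; together with $z$ this gives a cycle $D=z\,P\,z$. Write $N(u)=\{z,u',t\}$, where $u'$ is the neighbour of $u$ on $P$. I would now delete the edge $zu$. In $G\setminus zu$ the vertex $u$ has degree two with nonadjacent neighbours $u',t$, and $G\dd zu=(G-u)+u't$; by~\ref{sublem:G--xy is 3-con} this is $3$-connected, while $G\setminus zu$ is internally $3$-connected by the paragraph above. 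The aim is to exhibit in $G\setminus zu$ a $2$-separation neither side of which is a $P_3$---thereby contradicting internal $3$-connectivity---by using the second edge $zv$ and the path $P$ to route around $u$: the residual parallel connection between $z$ and the fragment $C_i$ supplies a $2$-cut that detaches the portion of $D$ near $z$ and $v$ from the rest, and the bound $|V(G)|\ge 5$ together with minimum degree three forces both sides to be strictly larger than a single $P_3$. Should the direct separation prove awkward, the fallback is to feed the situation into the cleaving machinery exactly as in~\ref{sublem:G--xy is 3-con}: produce a compatible set $S$ containing $zu$, cleave via Lemmas~\ref{lem:exist compatible set} and~\ref{lem:cleaving preserves sm3c}, and count edges to reach $|E(G)|\le 2|V(G)|-3$, contradicting $|E(G)|\ge 2|V(G)|-2$.

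Granting the first assertion, the bound $|Z|\ge 3$ is short. Each $z\in Z$ satisfies $d_G(z)\ge 4$, and since $Z$ is independent all neighbours of $z$ are degree-$3$ vertices lying in the components $C_1,\dots,C_k$; by the first assertion $z$ has at most one neighbour in each $C_j$, so $z$ meets at least four distinct components and in particular $k\ge 2$. Fixing any component $C_i$, the existence of another component shows $N(V(C_i))$ is a genuine vertex cut of $G$; as $G$ is $3$-connected and $N(V(C_i))\subseteq Z$, we get $|Z|\ge|N(V(C_i))|\ge 3$. The main obstacle I anticipate is the heart of the first assertion---certifying that the $2$-separation produced in $G\setminus zu$ has no $P_3$ side (equivalently, that the cleaving reduction applies), which is where the precise interaction of triangle-freeness, the independence of $3$-cuts, and the minimum-degree-three condition must be pinned down through a short case analysis on the length of $P$ and the location of $t$.
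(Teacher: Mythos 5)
Your treatment of the second assertion ($|Z|\ge 3$) is fine and essentially matches the paper, but your proof of the first assertion has a fatal gap, and it affects both your main route and your fallback. The main route asks for a $2$-separation of $G\setminus zu$ with no $P_3$ side, contradicting internal $3$-connectivity; but no such separation need exist, and your sketch (``the residual parallel connection \ldots supplies a $2$-cut'') constructs nothing. Indeed the implication you need --- if $z$ has two neighbours $u,v$ in $C_i$ then $G\setminus zu$ has a $2$-separation with no $P_3$ side --- is \emph{false} for graphs having every property you assembled (triangle-free, super-minimally $3$-connected, minimum degree three, edge-deletions internally $3$-connected). The alternating double wheel $A_4$ witnesses this: $x$ has four neighbours on the rim, which is a single component of $A_4-Z$, yet $A_4\setminus xv_1$ is internally $3$-connected, its only $2$-cut being $\{v_2,v_8\}$, which cuts off the $P_3$ on $v_2,v_1,v_8$. (Of course $A_4$ violates the standing count $|E(G)|\ge 2|V(G)|-2$, but your separation-finding step never uses that count, so it cannot succeed.) The fallback is self-defeating: Lemma~\ref{lem:exist compatible set} produces a compatible set containing $zu$ only when $G\setminus zu$ is \emph{not} internally $3$-connected, which is exactly what you correctly showed cannot happen under the standing hypotheses; so the cleaving count can never be triggered by the edge $zu$.

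What is missing is the mechanism the paper actually uses, and it is where the second edge $zv$ and the induction hypothesis of Theorem~\ref{thm:main-density} do the real work. With $N_G(u)=\{z,u',t\}$, one shows $G\dd zu=(G-u)+u't$ is not merely $3$-connected but \emph{super-minimally} $3$-connected: a proper $3$-connected subgraph $D$ would have to contain the new edge $u't$ (otherwise $D$ is a proper $3$-connected subgraph of $G$ itself); since one of $u',t$ lies in $C_i$, propagation along degree-$3$ vertices (any degree-$3$ vertex in a $3$-connected subgraph must have all three of its neighbours inside it) carries $D$ through $C_i-u$ to $v$, hence forces $z\in V(D)$; then the vertex-to-edge bridging on $\{z,u't\}$ turns $D$ back into a proper $3$-connected subgraph of $G$, a contradiction. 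Now $|E(G)|=|E(G\dd zu)|+2\le 2|V(G\dd zu)|=2|V(G)|-2$ forces equality, so by the induction hypothesis $G\dd zu$ is a wheel, and Lemma~\ref{lem:briding a wheel} then makes $G$ a wheel --- the desired contradiction. Your proposal never invokes the induction hypothesis or the wheel alternative, and they are indispensable here: the smaller graph $G\dd zu$ can genuinely be a wheel (the added edge $u't$ may create triangles even though $G$ has none), and that is precisely the case that no amount of separation analysis inside $G$ can exclude.
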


Suppose that $z$ is adjacent to distinct vertices $a$ and $b$ in $V(C_i)$, and let $N_G(a)=\{c,d\}$. By~\ref{sublem:G--xy is 3-con}, $G\dd za$, which equals $(G-a)+cd$, is $3$-connected. If $G\dd za$ has a proper $3$-connected subgraph $D$, then $D$ must contain $cd$, otherwise $D$ is a proper $3$-connected subgraph of $G$. Without loss of generality, $c\in V(C_i)$, so there is a path of degree-$3$ vertices containing $b$ and $c$ in $G\dd za$. Therefore, $D$ must contain $b$ as well as $z$. Let $D'$ be the graph obtained from $D$ by bridging $\{z,cd\}$. It is straightforward to see that $D'$ is a proper $3$-connected subgraph of $G$, a contradiction. Therefore, $G\dd za$ is super-minimally $3$-connected. Thus,
\[|E(G)|=|E(G\dd za)|+2\leq 2|V(G\dd za)|=2|V(G)|-2.\]
 Because $|E(G)|\geq 2|V(G)|-2$, we know $|E(G\dd za)|=2|V(G\dd za)|-2$ and, by the inductive hypothesis, $G\dd za$ is a wheel on at least three spokes. By Lemma~\ref{lem:briding a wheel}, we know $G$ is a wheel, a contradiction. Thus, there is at most one edge between $z$ and $V(C_i)$. Moreover, since $Z\neq\emptyset$, we know $k\geq 4$. Consequently, $|Z|\geq 3$, otherwise $Z$ is a $1$-cut or $2$-cut. Thus~\ref{sublem:one edge} holds.\\

\begin{sublemma}\label{sublem:tree}
    For all $i\in\{1,2,\dots,k\}$, the graph $C_i$ is a tree.
\end{sublemma}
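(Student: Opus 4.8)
The plan is to argue by contradiction: since $C_i$ is connected by definition, it suffices to show it is acyclic, so suppose $C_i$ contains a cycle $O$ and fix an edge $xy$ on $O$. Both $x$ and $y$ lie in $C_i$, a component of $G-Z$, so both have degree three in $G$; write $N_G(x)=\{y,a,b\}$ and $N_G(y)=\{x,c,d\}$, where $a$ and $c$ are the neighbours of $x$ and $y$ on $O$, respectively. First I would record that, because $G$ is triangle-free by~\ref{sublem:triangle-free}, the pairs $\{a,b\}$ and $\{c,d\}$ are disjoint and each consists of nonadjacent vertices (any coincidence or any edge among them would create a triangle through $xy$). Hence $xy$ falls into case~(\romannum{1}) of the enhanced-deletion definition, so $G\dd xy=(G-\{x,y\})+ab+cd$, a graph on $|V(G)|-2$ vertices with $|E(G)|-3$ edges. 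By~\ref{sublem:G--xy is 3-con}, $G\dd xy$ is $3$-connected.

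The crux is to show that $G\dd xy$ is in fact super-minimally $3$-connected. Suppose it has a proper $3$-connected subgraph $H$. The key observation is that $O-\{x,y\}$ is a path $P$ joining $a$ to $c$, all of whose vertices have degree three in $G\dd xy$: the interior vertices of $P$ are untouched by the operation, while $a$ and $c$ merely trade the lost neighbour ($x$, respectively $y$) for the new one ($b$, respectively $d$). Since a $3$-connected graph has minimum degree at least three, every degree-three vertex of $G\dd xy$ that lies in $H$ forces all three of its neighbours into $H$. Thus if $ab\in E(H)$, then $a\in V(H)$, and propagating along $P$ forces every vertex of $P$, and in particular $c$, into $H$; as $c$ has degree three in $G\dd xy$, this forces $cd\in E(H)$. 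By symmetry, $H$ contains both $ab$ and $cd$, or neither. If it contains neither, then $H$ is already a proper $3$-connected subgraph of $G$, contradicting super-minimal $3$-connectivity of $G$. If it contains both, then applying an edge-to-edge bridging on $\{ab,cd\}$ to $H$ restores $x$, $y$, and the edge $xy$, yielding a subgraph $H'$ of $G$ that is $3$-connected by Proposition~\ref{prop:bridging preserve 3-con} and proper (it omits whichever vertex or edge of $G\dd xy$ that $H$ omitted); this again contradicts super-minimal $3$-connectivity of $G$. Hence $G\dd xy$ has no proper $3$-connected subgraph.

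Finally I would invoke the inductive hypothesis of Theorem~\ref{thm:main-density}: since $G\dd xy$ is super-minimally $3$-connected on fewer than $|V(G)|$ vertices, $|E(G\dd xy)|\le 2|V(G\dd xy)|-2$, whence
\[
|E(G)|=|E(G\dd xy)|+3\le 2\bigl(|V(G)|-2\bigr)-2+3=2|V(G)|-3,
\]
contradicting the standing assumption $|E(G)|\ge 2|V(G)|-2$. Therefore $C_i$ contains no cycle and, being connected, is a tree. The main obstacle is the forcing argument of the middle paragraph—namely that a proper $3$-connected subgraph of $G\dd xy$ cannot contain exactly one of the two new edges $ab$ and $cd$; this is precisely where the hypothesis that $xy$ lies on a cycle of $C_i$, rather than on a tree edge, is used, since it is the cycle that supplies the degree-three path $P$ from $a$ to $c$ along which the propagation runs.
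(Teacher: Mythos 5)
Your proposal is correct and takes essentially the same route as the paper's own proof: delete a cycle edge of $C_i$ via enhanced deletion (case (\romannum{1}), justified by triangle-freeness), show the result is super-minimally $3$-connected by the degree-$3$ forcing argument along the remaining cycle path together with an edge-to-edge bridging, and then derive the contradiction $|E(G)|\le 2|V(G)|-3$ from the inductive hypothesis. The only differences are cosmetic (labels, and your slightly more explicit justifications of the nonadjacency of the new edge pairs and of the properness of the bridged subgraph).
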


Suppose that $C_i$ has a cycle $a_1a_2\dots a_ta_1$. Let $N_G(a_1)=\{a_2,a_t,b_1\}$ and let $N_G(a_2)=\{a_1,a_3,b_2\}$. Since $G$ is triangle-free, the vertices $a_3,a_t,b_1$, and $b_2$ are distinct. By~\ref{sublem:G--xy is 3-con}, $G\dd a_1a_2$, which equals $(G-\{a_1,a_2\})+b_1a_t+b_2a_3$, is $3$-connected. If $G\dd a_1a_2$ has a proper $3$-connected subgraph $D$, then $D$ must contain $b_1a_t$ or $b_2a_3$, otherwise $D$ is a proper $3$-connected subgraph of $G$. However, because $a_3a_4\dots a_t$ is a path of degree-$3$ vertices in $G\dd a_1a_2$, we know $D$ contains both $a_3$ and $a_t$, and hence contains both $b_1a_t$ and $b_2a_3$. Let $D'$ be the graph obtained from $D$ by bridging $\{b_1a_t,b_2a_3\}$. It is straightforward to see that $D'$ is a proper $3$-connected subgraph of $G$, a contradiction. Therefore, $G\dd a_1a_2$ is super-minimally $3$-connected. However,
\[|E(G)|=|E(G\dd a_1a_2)|+3\leq 2|V(G\dd a_1a_2)|+1=2|V(G)|-3,\]
a contradiction. Therefore~\ref{sublem:tree} holds.\\

Let $G'=G/(E(C_1)\cup E(C_2)\cup\dots\cup E(C_k))$. By~\ref{sublem:one edge} and~\ref{sublem:tree}, we know $G'$ is simple. For each $i\in\{1,2,\dots,k\}$, let $c_i$ be the vertex obtained by identifying all the vertices in $C_i$, and let $C=\{c_1,c_2,\dots,c_k\}$. By~\ref{sublem:all incident to deg-3}, $Z$ is a stable set in $G$, so $G'$ is a bipartite graph with bipartition $(Z,C)$ such that $d_{G'}(c)\geq 3$ for all $c\in C$. Moreover, by~\ref{sublem:tree},
\[
d_{G'}(c_i) \;=\; 3|V(C_i)| - 2|E(C_i)| \;=\; |E(C_i)| + 3,
\]
for all $i \in \{1,2,\dots,k\}$. In particular,
\[
\sum_{i=1}^k d_{G'}(c_i)
   \;=\; \sum_{i=1}^k \bigl(|E(C_i)| + 3\bigr).\tag{8}\label{eq:eight}
\]

Next, we show that

\begin{sublemma}\label{sublem:k<=2h-3}
    $k\leq 2|Z|-3$.
\end{sublemma}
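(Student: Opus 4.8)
The plan is to argue by contradiction, applying Corollary~\ref{cor:y>= 2x-2} to the bipartite graph $G'$ and then lifting the resulting $3$-connected subgraph of $G'$ back to a proper $3$-connected subgraph of $G$ by means of Lemma~\ref{lem:contracting forest}, which contradicts the super-minimality of $G$. Concretely, suppose for a contradiction that $k\geq 2|Z|-2$. By~\ref{sublem:one edge} we have $|Z|\geq 3$, and we already know that $d_{G'}(c)\geq 3$ for every $c\in C$. Hence $G'$, with bipartition $(Z,C)$, satisfies the hypotheses of Corollary~\ref{cor:y>= 2x-2} taking $X=Z$ and $Y=C$, since $|C|=k\geq 2|Z|-2$. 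Therefore $C$ has a proper subset $W$ such that $H' := G'[N_{G'}[W]]$ is $3$-connected, where $N_{G'}[W]=W\cup N_{G'}(W)$ and $N_{G'}(W)\subseteq Z$.

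Next I would un-contract $H'$ inside $G$. Let $F'$ be the forest $\bigcup_{c_i\in W} C_i$, and let $H$ be the subgraph of $G$ with vertex set $\bigl(\bigcup_{c_i\in W}V(C_i)\bigr)\cup N_{G'}(W)$ whose edges are all edges of $G$ lying inside some $C_i$ with $c_i\in W$, together with all edges of $G$ joining such a $C_i$ to $N_{G'}(W)$. The two facts to verify are (a) $H/E(F')=H'$, and (b) the minimum degree of $H$ is at least three. For (a), each tree $C_i$ with $c_i\in W$ contracts to the vertex $c_i$, and by~\ref{sublem:one edge} there is at most one edge of $G$ between $C_i$ and any $z\in Z$; hence contracting $E(F')$ creates no parallel edges, and the result is exactly $G'[N_{G'}[W]]=H'$. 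For (b), every tree vertex $v\in C_i$ has $d_G(v)=3$, and each neighbour of $v$ lying in $Z$ belongs to $N_{G'}(c_i)\subseteq N_{G'}(W)$, so all three edges at $v$ survive in $H$ and $d_H(v)=3$; meanwhile each $z\in N_{G'}(W)$ satisfies $d_H(z)=d_{H'}(z)\geq 3$, again using~\ref{sublem:one edge} to see that distinct neighbours of $z$ in $W$ correspond to distinct edges of $H$, and using that $H'$ is $3$-connected so has minimum degree at least three.

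With (a) and (b) established, $F'$ is a forest in $H$, the minimum degree of $H$ is at least three, and $H/E(F')=H'$ is simple and $3$-connected; hence Lemma~\ref{lem:contracting forest} shows that $H$ is $3$-connected. Finally, because $W$ is a proper subset of $C$, there is a component $C_j$ with $c_j\notin W$, and its nonempty vertex set consists of degree-$3$ vertices disjoint from $V(H)$, so $H$ is a proper subgraph of $G$. This contradicts the assumption that $G$ is super-minimally $3$-connected, and therefore $k\leq 2|Z|-3$, proving~\ref{sublem:k<=2h-3}.

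The step most in need of care, and the main obstacle, is the verification of (a) and (b): one must check that contracting the trees $C_i$ neither creates multi-edges nor lowers the degree of any surviving vertex below three, so that Lemma~\ref{lem:contracting forest} genuinely applies. Both points rest essentially on~\ref{sublem:one edge} and~\ref{sublem:tree} (each $C_i$ being a tree attached to $Z$ by at most one edge per vertex of $Z$), which is precisely what makes the un-contraction $H$ a well-defined simple subgraph realising $H'$ as $H/E(F')$.
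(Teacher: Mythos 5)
Your proof is correct and takes essentially the same approach as the paper: assume $k\geq 2|Z|-2$, apply Corollary~\ref{cor:y>= 2x-2} to the bipartition $(Z,C)$ of $G'$, and lift the resulting $3$-connected subgraph back to a proper $3$-connected subgraph of $G$ via Lemma~\ref{lem:contracting forest}, using~\ref{sublem:one edge} and~\ref{sublem:tree} to verify simplicity and the degree conditions. Your graph $H$ is exactly the paper's induced subgraph $G[\mathcal{V}]$ (since $Z$ is stable and distinct components of $G-Z$ are nonadjacent), so the two arguments coincide step for step.
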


Suppose $k \geq 2|Z|-2$. By Corollary~\ref{cor:y>= 2x-2}, $C$ has a proper subset $C'$ such that $G'[N_{G'}[C']]$ is $3$-connected. 
Let $\cC = \{\, C_i \in \{C_1,C_2,\dots,C_k\} : c_i \in C'\,\}$ and $\cV = N_{G'}(C') \cup \{\, v \in V(C_i) : C_i \in \cC \,\}$. 
Note that $G'[N_{G'}[C']] = G[\cV] / \{\, e \in E(C_i) : C_i \in \cC \,\}$. 
Let $u$ be a vertex in $N_{G'}(C')$. Because none of the edges in $\{\, e \in E(C_i) : C_i \in \cC \,\}$ is incident to $u$ in $G[\cV]$ and $G'[N_{G'}[C']]$ is $3$-connected, we know $d_{G[\cV]}(u)=d_{G'[N_{G'}[C']]}(u)\geq 3$.
Moreover, for each vertex $c \in V(C_i)$ with $C_i \in \cC$, because every neighbor of $c$ in $G$ is contained in $\cV$, we know $d_{G[\cV]}(c)=d_G(c)=3$. 
By~\ref{sublem:tree}, $G'[N_{G'}[C']]$ is obtained from $G[\mathcal{V}]$ by contracting a forest and, by~\ref{sublem:one edge}, $G'[N_{G'}[C']]$ is simple. Because $G'[N_{G'}[C']]$ is $3$-connected and the minimum degree of $G[\mathcal{V}]$ is at least three, by Lemma~\ref{lem:contracting forest}, $G[\mathcal{V}]$ is $3$-connected.
Therefore, $G[\cV]$ is a proper $3$-connected subgraph of $G$, a contradiction.  Thus~\ref{sublem:k<=2h-3} holds.\\

From~\ref{sublem:one edge},~\ref{sublem:tree}, and~\ref{sublem:k<=2h-3}, we obtain the following:
{\allowdisplaybreaks
\begin{align*}
    |E(G)|&=|E(G')|+\sum_{i=1}^k|E(C_i)|\\
    &=\sum_{i=1}^k d_{G'}(c_i)+\sum_{i=1}^k|E(C_i)|\\
    &=\sum_{i=1}^k(|E(C_i)|+3)+\sum_{i=1}^k|E(C_i)| && \text{by~(\ref{eq:eight}),}\\
    &=3k+2\sum_{i=1}^k|E(C_i)|\\
    &\leq 2k+2|Z|-3+2\sum_{i=1}^k|E(C_i)|&& \text{by~(\ref{sublem:k<=2h-3}),}\\
    &=2|Z|-3+2\sum_{i=1}^k(|E(C_i)|+1)\\
    &=2|Z|-3+2\sum_{i=1}^k|V(C_i)|&& \text{by~(\ref{sublem:tree}),}\\
    &=2|V(G)|-3.
\end{align*}
}
This contradiction completes the proof.
\end{proof}

\section*{Acknowledgments}
The author sincerely thanks James Oxley for proposing the study of super-minimally $3$-connected graphs. His invaluable guidance and insightful advice were essential to this work.

\end{document}